\documentclass[11pt,reqno,a4paper]{amsart}
\usepackage{amsmath,amsfonts,amssymb,amsthm,amsxtra,latexsym,amscd,enumerate,verbatim}
\usepackage[margin=2.0cm]{geometry}
\usepackage[abbrev,lite,nobysame]{amsrefs}
\usepackage{graphics,graphicx}
\usepackage{subcaption}
\usepackage{tabularx}
\usepackage[usenames,dvipsnames]{color}
\usepackage{times}
\usepackage{bbm}
\usepackage[colorlinks=true, pdfstartview=FitV, linkcolor=blue, citecolor=blue, urlcolor=blue]{hyperref}
\usepackage{tikz}
\usepackage{enumerate,enumitem}
\usepackage{mathtools}
\makeatletter
\usepackage[utf8]{inputenc}
\usepackage{amsmath,amsfonts,amssymb,amsxtra,latexsym,amscd,enumerate,amsthm,verbatim}
\usepackage[abbrev,lite,nobysame]{amsrefs}
\usepackage{float}
\usepackage{tikz}
\usepackage{pgfplots}
\usepackage{caption}
\usepackage[colorlinks=true]{hyperref}
\hypersetup{urlcolor=blue, citecolor=blue}
\usepackage{hyperref}
\numberwithin{equation}{section}
\def\dd{{\mathrm{d}}}

\newcommand{\R}{\mathbb{R}}

\newcommand{\C}{\mathbb{C}}
\newcommand{\Z}{\mathbb{Z}}
\newcommand{\defeq}{\mathrel{\mathop:}=}

\newcommand{\inn}{{\quad\hbox{in } }}
\newcommand{\ve}{\varepsilon}
\newcommand{\be}{\begin{equation}}
\newcommand{\ee}{\end{equation}}

\newtheorem{lemma}{Lemma}[section]
\newtheorem{prop}{Proposition}[section]
\newtheorem{theorem}{Theorem}
\newtheorem{corollary}{Corollary}[section]
\newtheorem{remark}{Remark}[section]

\pgfplotsset{compat=1.18} 

\title[Clustered vortex helices with compactly supported cross-sectional vorticity]{Clustered vortex helices with compactly supported cross-sectional vorticity in the 3D Euler equations}
\author[A.~Averkiou]{Averkios Averkiou}
\address{\noindent A.~Averkiou: Department of Mathematical Sciences University of Bath, Bath BA2 7AY,
United Kingdom.}
\email{aa4119@bath.ac.uk}

\author[M.~Musso]{Monica Musso}
\address{\noindent M.~Musso: Department of Mathematical Sciences University of Bath, Bath BA2 7AY,
United Kingdom.}
\email{m.musso@bath.ac.uk}
\author[F.~Yu]{Fang Yu}
\address{\noindent F.~Yu: School of Mathematical Sciences, Chongqing University of Technology,
Chongqing, P.R. China}
\email{yufang@cqut.edu.cn}

\begin{document}
\begin{abstract}
We consider the three-dimensional incompressible Euler equations for helical flows without swirl. By adapting gluing techniques, we construct the first smooth multi-vortex solution in the whole space $\mathbb{R}^3$ exhibiting a cluster of collapsing helical filaments, with the associated cross-sectional vorticity remaining compactly supported in $\mathbb{R}^2$
 for all times. Our result generalises previous collapsing configurations in $\mathbb{R}^3$ with rapidly decaying vorticity cores, and extends related variational solutions obtained in infinite cylindrical domains.
\end{abstract}
\maketitle
\section{Introduction}
The motion of a three-dimensional inviscid and incompressible fluid without external forcing is described by the Euler equations in $\R^3$. For a sufficiently smooth velocity field $\mathbf{u}: \R^3\times [0,T) \to \R^3$ with $\nabla\cdot \mathbf{u} =0,$ vorticity $\vec{\omega}\defeq \nabla\times{\mathbf{u}}: \R^3\times [0,T) \to \R^3,$ stream function $\vec{\psi}: \R^3 \times [0,T) \to \R^3$ and prescribed initial data $\mathbf{u}(\cdot,0)=u_0(x)$, the Euler system in vorticity formulation reads
\begin{equation}
\label{euler}
\begin{aligned}
\partial_{t}\vec \omega  +
(\mathbf{u}\cdot \nabla){\vec \omega}
&=( \vec \omega \cdot \nabla)\mathbf {u}  &&  \inn \, \R^3\times (0,T), \\
\quad \mathbf{u}  = \nabla \times \vec {\psi},\ &
-\Delta \vec \psi =  \vec \omega  && \inn \, \R^3\times (0,T),\\
{\vec \omega}(\cdot,0)  &=  \nabla \times u_0 && \inn \, \R^3.
\end{aligned}
\end{equation}
We are interested in the rigorous construction of smooth solutions to \eqref{euler} with vortex filament structure, where the vorticity remains highly concentrated around a smooth curve evolving over time. Despite the growing interest in the vortex dynamics of such flows in recent years, their study dates back to the foundational works of Helmholtz and Kelvin \cites{MR1579057,thomson1868vi}. Formal asymptotic considerations suggest that when the vorticity is concentrated in a thin tubular neighbourhood of radius $\ve>0$ around a smooth curve $\mathcal{G}(t),$ the associated filament evolves to leading order according to the binormal flow with velocity  $\mathcal{O}\left(|\log\ve|\right)$. More precisely, for an arclength parametrisation $\gamma(s,t)$ and a circulation constant $\bar c$, the formal derivations in \cites{da1906sul, levi1908sull}  indicate that the evolution is governed by
\[\partial_{t}\gamma=\bar{c}|\log\ve|\left(\gamma_{s}\times\gamma_{ss}\right)\ \quad \text{as}\quad \ve\to 0,\]
which one can also express as
\begin{equation}\label{binormal}
\partial_{\tau}\gamma=\bar{c}\varkappa\mathbf{B}_{{\mathcal{G}(\tau)}}, \quad t = |\log\ve|^{-1}\tau,
\end{equation}
with $\varkappa$ denoting the curvature and $\mathbf{B}_{\mathcal{G}(\tau)}$ the binormal unit vector. In this regime, the \emph{vortex filament conjecture} concerns the rigorous verification that concentrated vorticity structures remain localised and evolve at leading order along the trajectories determined by \eqref{binormal}. Beyond the conditional result in \cite{MR3609248}, significant progress has been achieved in the case of circles and helices, which constitute symmetric solutions of \eqref{binormal}.

For circular vortex filaments, the validity of the binormal law \eqref{binormal} was first established in \cite{fraenkel1970steady} through the construction of \emph{vortex rings} propagating with constant speed $\mathcal{O}\left(|\log\ve|\right)$, with subsequent  contributions including \cites{MR3101789,MR422916,MR1738349,MR4404610, guo2026global}. In the helical framework, the vortex filament conjecture was rigorously verified in \cite{MR4417384}, where the authors constructed \emph{vortex helices}; see also \cites{qin2024concentrated,MR4555251,MR4534710,MR5036920} and the references therein. Of particular relevance to the present work are the helical multi-vortex solutions in \cites{MR4899797,cao2023clustered,MR5030554,cao2025co,donati2024dynamics}, covering among others nearly parallel and collapsing configurations, while results on global well-posedness and long-time dynamics appear in  \cites{MR4809294,guo2024long,MR3320529}. Finally, helical structures have also been investigated in related models, such as the Ginzburg–Landau  \cites{MR4493621,DUAN2026114325}, the Gross–Pitaevskii  \cites{MR2181460,MR4540752}, and the Schrödinger map \cite{MR3449250} equations. 
\subsection{Statement of the main result}
This paper is concerned with the rigorous construction of the first smooth global-in-time solution $\vec{\omega}_{\ve}(x,\tau)$ to \eqref{euler} in the whole space $\R^3,$ which concentrates, as $\ve \to 0,$ near multiple collapsing helices undergoing coupled rotation and translation at nearly identical speeds, with the main novelty that the associated cross-sectional vorticity remains compactly supported in $\R^2$ for all times. We refer to such a solution as a \emph{cluster of vortex helices with compactly supported cross-sectional vorticity}.

In this regard, given an integer $N\geq 1$ and $h>0,$ we take planar points $P_i\defeq(a_i,b_i),\,i=1,\dots,N,$ and consider the evolving circular helices  $\mathcal{G}_i(\tau)$ of radius $R_i=\sqrt{a_i^2+b_i^2}$ and common pitch $h>0$, parametrised by arclength as 
\begin{equation} \label{parhelix}
	\gamma_i(s,\tau ) = \left( \begin{matrix}
		a_i \cos \left(
		\frac{s-\sigma_{1,i} \tau}{\sqrt{h^2 + R_i^2}}\right)-b_i\sin \left(\frac{s-\sigma_{1,i}\tau}{\sqrt{h^2+R_i^2}} \right)
		\\
		a_i \sin \left(
		\frac{s-\sigma_{1,i} \tau}{\sqrt{h^2 + R_i^2}}\right)+b_i\cos \left(
		\frac{s-\sigma_{1,i} \tau}{\sqrt{h^2 + R_i^2}}\right)
		\\
		\frac{ h s + \sigma_{2,i} \tau }{\sqrt{h^2+ R_i^2}}
	\end{matrix} \right) \in \R^3, \quad \sigma_{1,i} = \frac{\bar{c}_i\,  h}{R_i^2 + h^2},
	\quad 
	\sigma_{2,i} = \frac{ \bar{c}_i\, R_i^2}{R_i^2 + h^2}.
	\end{equation}
One can readily verify that each $\gamma_i$ has curvature $\frac{R_i}{R_i^2+h^2}$, torsion $\frac{h}{R_i^2 + h^2}$, and evolves according to the binormal flow \eqref{binormal} with $\bar{c}_i=\bar{c}\kappa_i$, where $\kappa_i$ is the circulation and $\bar{c}$ is determined by the regularisation profile; see \eqref{defGamma}. Hereafter, we identify the base point  $(a_i,b_i,0)$ of each helix with the planar point $(a_i,b_i)$.

In view of our interest in  colliding helical filaments, for a fixed $r_0>0$ we consider points $P_i=(a_i,b_i)$ in \eqref{parhelix}  of the specific form 
\begin{equation}\label{formpoints}
P_i=(r_0+\tilde{r},0)+ \frac{\widehat{P}_i}{|\log\ve|}, \quad |\tilde{r}|\leq \delta \frac{\log|\log\ve|}{|\log\ve|}, \quad\delta<|\widehat{P}_i|<\delta^{-1}, \quad i=1,\dots,N,
\end{equation}
where $\delta>0$ is sufficiently small and independent of $\ve>0.$ Since $P_i=(a_i,b_i)\to (r_0,0)$ as $\ve\to 0$ for all $i\in\{1,\dots,N\},$ we infer that the helices $\mathcal{G}_i(\tau)$ in \eqref{parhelix} collapse onto the same limiting helix as $\ve \to 0.$ 

Furthermore, we assume that the points $\widehat{P}_i$ in \eqref{formpoints} are uniformly separated, namely there exists a constant $d>0$ independent of $\ve>0$ such that 
\begin{equation}\label{unifdistance}
\min_{i\neq j} |\widehat{P}_j-\widehat{P}_i|\geq d,
\end{equation}
and we also impose a reflection symmetry on the set $\{\widehat{P}_1,\dots,\widehat{P}_N\}$ with respect to the first coordinate axis, i.e.
\begin{equation}\label{symmetryforpoints}
\mathcal{P}=(\mathcal{P}_{1},\mathcal{P}_{2}) \in \{\widehat{P}_1,\dots,\widehat{P}_N\} \iff (\mathcal{P}_{1},-\mathcal{P}_{2}) \in \{\widehat{P}_1,\dots,\widehat{P}_N\}.\end{equation} 
Finally, for $h>0$ and  $\kappa_1,\dots,\kappa_N\in\R$ such that $\sum\limits_{i=1}^{N}\kappa_i>0$, the points $\widehat{P}_i$ admit the decomposition
\begin{equation}\label{points1}
\widehat{P}_i=P_i^{b}+q_i, \quad i=1,\dots,N,
\end{equation}
so that if we write in anisotropic coordinates 
\begin{equation}\label{rescaledmainpoints}
{P}^{b}_i=\left(\frac{h\,\widetilde{P}^{b}_{i,1}}{\sqrt{h^2+r_0^2}},\widetilde{P}^{b}_{i,2}\right), \quad \widetilde{P}^{b}_i=\left(\widetilde{P}^{b}_{i,1},\widetilde{P}^{b}_{i,2}\right),
\end{equation}
the configuration $\left(\widetilde{P}^{b}_{1},\dots,\widetilde{P}^{b}_{N}\right)$ solves, for all $i=1,\dots,N,$ the system of balancing conditions
\begin{equation}\label{balancingcond}
\begin{aligned}
\sum_{j \neq i} \kappa_j\frac{\widetilde{P}^{b}_{i,1}-\widetilde{P}^{b}_{j,1}}{|\widetilde{P}^{b}_i-\widetilde{P}^{b}_j|^2}&=\left(\frac{\kappa_ih r_0}{2(h^2+r_0^2)^{\frac 3 2}}+\alpha\frac{h r_0}{\nu'(1)\sqrt{h^2+r_0^2}}\right),  \\
\sum_{j \neq i} \kappa_j\frac{\widetilde{P}^{b}_{i,2}-\widetilde{P}^{b}_{j,2}}{|\widetilde{P}^{b}_i-\widetilde{P}^{b}_j|^2}&=0,
\end{aligned}
\end{equation}
for some constants $\alpha>0$ and $\nu'(1)<0$; see \eqref{defGamma}. Here and throughout this work, the system \eqref{balancingcond} uniquely determines $\alpha>0$ as  
\begin{equation}\label{uniformspeed}
\alpha=\frac{-\nu'(1)}{2(h^2+r_0^2)}\frac{\sum\limits_{i=1}^{N} \kappa_i^2}{\sum\limits_{i=1}^{N}\kappa_{i}}.
\end{equation}
In addition, the terms ${q}_i$ in \eqref{points1} are lower order perturbations, with $|q_i|\to 0$ as $\ve \to 0$. 

Notably, admissible configurations of points $P^{b}=\left(P^{b}_1,\dots,P^{b}_N\right)$ satisfying \eqref{unifdistance}, \eqref{symmetryforpoints}, \eqref{rescaledmainpoints} and \eqref{balancingcond} are exhibited in \cites{MR4404610,MR4350893}. In this context, since \eqref{balancingcond} is translation-invariant, we call a solution $P^{b}$ as in \eqref{rescaledmainpoints} \emph{nondegenerate} if the linearisation of \eqref{balancingcond} at $P^{b}$ has a one-dimensional kernel, arising from the symmetry assumption  \eqref{symmetryforpoints}. A more detailed analysis of this condition and its consequences is deferred to Section \ref{reduced}, where it plays a crucial role in locating the points $\widehat{P}_i$ in \eqref{points1} as small perturbations of $P_i^{b}$,\, $i=1,\dots,N,$ thereby determining the adjusted vortex centers of the helices associated with an exact smooth solution of \eqref{euler}.

The following Theorem is the main result of this work.
\begin{theorem}\label{maintheorem}
Fix an integer $N\geq 1$, constants $r_0>0,\,h>0,$ and  $\kappa_1,\dots,\kappa_N\in\R$ satisfying $\sum\limits_{i=1}^{N}\kappa_i>0.$ For each $i=1,\dots,N$, denote by $\mathcal{G}_i(\tau)$ the evolving helix parametrised by \eqref{parhelix}, centered at the point $P_i=(a_i,b_i)$ in \eqref{formpoints}, and let $\delta_{\mathcal{G}_i(\tau)}$ designate a uniform Dirac measure supported on $\mathcal{G}_i(\tau)$ and $\mathbf{T}_{\mathcal{G}_i(\tau)}$ its tangent unit vector. Moreover, assume  
that the configuration $P^{b}=\left(P_1^{b},\dots,P_N^{b}\right)$ satisfying \eqref{unifdistance},  \eqref{symmetryforpoints} and \eqref{rescaledmainpoints}, is a nondegenerate solution of  \eqref{balancingcond}. Then, there exist 
$\tilde{r}_{*}\in\R$,  $q_1,\dots,q_N\in\R^2$ and specific points of the form
\begin{equation}\label{pointstheorem}
P_i=(r_0+\tilde{r}_{*},0)+\frac{1}{|\log\ve|}\widehat{P}_i,\quad \widehat{P}_i=P_{i}^{b}+q_i, \quad |\tilde{r}_{*}|, |q_i| \lesssim \frac{\log|\log\ve|}{|\log\ve|}, \quad i=1,\dots,N,
\end{equation}
which determine a smooth global-in-time solution $\vec{\omega}_{\ve}(x,\tau)$ of \eqref{euler} with cross-sectional vorticity compactly supported in $\R^2$ for all times, such that for all $\tau\in\R$ it satisfies
\[\vec{\omega}_{\ve}(x,|\log\ve|^{-1}\tau) \rightharpoonup \sum_{i=1}^{N}\kappa_i\delta_{\mathcal{G}_i(\tau)}\mathbf{T}_{\mathcal{G}_i(\tau)}\quad \mbox{as} \quad \ve\to 0,\]
in the sense of measures. 
\end{theorem}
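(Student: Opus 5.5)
We plan to reduce the problem to a two-dimensional semilinear elliptic problem through the helical symmetry reduction, and then solve that problem by a Lyapunov–Schmidt gluing argument. Following the framework of \cite{MR4417384}, we look for helical solutions without swirl. In such solutions the vorticity is determined by a scalar function $w(x_1,x_2,\tau)$ on the cross-section $\R^2$, through $\vec\omega=\frac{w}{h}(-x_2,x_1,h)$ up to normalisation. The Euler system \eqref{euler} then reduces to the transport equation
\[w_t+\nabla^\perp\psi\cdot\nabla w=0,\qquad -\mathrm{div}(K\nabla\psi)=w,\]
where $K(x)=\frac{1}{h^2+|x|^2}\begin{pmatrix} h^2+x_2^2 & -x_1x_2\\ -x_1x_2 & h^2+x_1^2\end{pmatrix}$. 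We seek solutions rotating rigidly with angular speed $\alpha|\log\ve|$, that is, $w(x,t)=W(e^{-i\alpha|\log\ve| t}x)$. This reduces the problem to the stationary equation
\[-\mathrm{div}(K\nabla\Psi)=\ve^{-2}f\Big(\Psi-\tfrac{\alpha}{2}|\log\ve||x|^2\Big),\]
with a nonlinearity $f(s)=s_+^p$ (for $p>1$). This choice is what produces a compactly supported $W$: outside the vortex cores the argument of $f$ is negative, so $W$ vanishes there. The constants $\nu'(1)$ and $\bar c$ appearing in \eqref{defGamma} come from the radial ground state of $-\Delta\Gamma=\Gamma_+^p$ on its support $B_1$.

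The first step is the approximation. Near each $P_i$ we make the linear change of coordinates that turns $K(P_i)$ into the identity. In these coordinates we place a scaled copy of the profile $\Gamma$, with mass $\kappa_i$ and core radius $\ve$. The regular part (Robin/Green correction) comes from the anisotropic Green function of $-\mathrm{div}(K\nabla\cdot)$, whose expansion produces a $\log$ singularity together with a curvature correction. We compute the error and see that the leading terms cancel precisely when the points satisfy the balancing conditions \eqref{balancingcond}. The anisotropic scaling \eqref{rescaledmainpoints} arises from the eigenvalues of $K(r_0,0)$. The self-interaction term, of size $\frac{\kappa_i h r_0}{2(h^2+r_0^2)^{3/2}}$ and due to the $x$-dependence of $K$, balances against the rotation term $\alpha$ and the mutual interactions at distance $|\log\ve|^{-1}$. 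Summing the first equation weighted by $\kappa_i$ gives the value \eqref{uniformspeed}. The common radial shift $\tilde r$ is the free parameter used to fix the total $\log|\log\ve|$ correction.

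The second step is to solve for the correction $\phi$ in each core by a fixed-point argument. This relies on inverting the linearised operator $\Delta+p\Gamma_+^{p-1}$ around each core, using nondegeneracy: its kernel on $\R^2$ is spanned by $\partial_{1}\Gamma,\partial_2\Gamma$. The reflection symmetry \eqref{symmetryforpoints} is built into the ansatz (evenness in $x_2$ for the whole configuration), and it reduces the number of kernel directions. We work in weighted $L^\infty$ norms. The hard part is that $f$ is only $C^{1,p-1}$ across the free boundary $\{\Psi=\frac\alpha2|\log\ve||x|^2\}$. We therefore need to show that the perturbed free boundary stays a small $C^1$ perturbation of the circle, so that the support remains compact. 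This step, together with controlling the nonlocal interactions between cores at distance $|\log\ve|^{-1}$, which are only logarithmically separated, is where we expect the main obstacle to lie. The error must be estimated to order $o(|\log\ve|^{-1})$ relative to the interaction forces.

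The final step is the reduced problem. Solving the projected problem leaves Lagrange multipliers $c_{ij}$, and these vanish if and only if a finite-dimensional system in $(\tilde r,q_1,\dots,q_N)$ holds. This system is a perturbation of the linearisation of \eqref{balancingcond} by terms of size $O(\log|\log\ve|/|\log\ve|)$. Nondegeneracy modulo the one-dimensional translation kernel allows us to solve for the $q_i$. The remaining direction, corresponding to translation along the $x_1$-axis, is exactly the role of $\tilde r$: it is fixed by the $\kappa$-weighted sum of the first equations. Since the leading order there is nondegenerate in $\tilde r$, the degree or contraction mapping theorem gives $\tilde r_*$ and $q_i$ with the bounds stated in \eqref{pointstheorem}. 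Undoing the reduction yields a smooth helical vorticity $\vec\omega_\ve$ whose cross-section is compactly supported near the $P_i$ rotated. Since the mass $\ve^{-2}\int f$ of each core converges to $\kappa_i$, after the time rescaling we obtain the convergence in the sense of measures toward $\sum_i\kappa_i\delta_{\mathcal G_i(\tau)}\mathbf T_{\mathcal G_i(\tau)}$.
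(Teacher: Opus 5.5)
\textbf{There is a genuine gap: your ansatz has no per-vortex levels or scales, and these are exactly what the construction needs.}

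The first problem is your single universal nonlinearity $\ve^{-2}f(s)$ with $f(s)=s_+^p$.
\begin{itemize}
\item[(a)] \emph{Signs.} Since $f\ge 0$, your vorticity is nonnegative. The theorem allows $\kappa_i\in\R$ with only $\sum_i\kappa_i>0$.
\item[(b)] \emph{Amplitude and radius are tied.} With a fixed $f$ and a fixed prefactor, a local profile $A\,\Gamma((x-P_i)/s)$ solves the equation only if $A^{p-1}=\ve^2/s^2$. So ``mass $\kappa_i$ and core radius $\ve$'' forces every $\kappa_i$ to equal $\int\Gamma_+^p$.
\item[(c)] \emph{One common level.} All free boundaries must lie on the single level set $\{\Psi-\frac{\alpha}{2}|\log\ve||x|^2=0\}$. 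Because the cores are only $|\log\ve|^{-1}$ apart, the mutual contributions to the stream function at $P_i$ are $O(\log|\log\ve|)$. The $i$-dependent part of $\frac{\alpha}{2}|\log\ve||P_i|^2$ is $O(1)$. The self-induced part, however, is $-\kappa_i\nu'(1)|\log\ve|$. A common level therefore forces all $\kappa_i$ to coincide at leading order.
\item[(d)] \emph{Extra constraints.} Even when the $\kappa_i$ coincide, the $N$ level-matching conditions at orders $\log|\log\ve|$ and $O(1)$ remain. Your ansatz has no parameters to meet them, since the positions are already fixed by the balancing conditions.
\end{itemize}
The paper avoids all of this with the vortex-dependent nonlinearity \eqref{nonlinearity}. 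Each $F_i$ has its own level $-\kappa_i\nu'(1)|\log\ve|-\frac{\alpha}{2}|\log\ve||P_i|^2$ and its own signed amplitude $\kappa_i/(\ve^2\mu_i^2)$. Each is switched off away from $P_i$ by the cut-off $\eta_i$. The core scales $\mu_i=\mu_i^0+\mu_i^*$ are free parameters, and $\mu_i^0$ is defined by \eqref{scalingmain} precisely to absorb the remaining $\log|\log\ve|$ and $O(1)$ constants.

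The same missing degrees of freedom reappear in your linear theory.
\begin{itemize}
\item[(a)] \emph{The dilation mode.} Besides the bounded modes $\partial_1\Gamma,\partial_2\Gamma$, the operator $\Delta+p\Gamma_+^{p-1}$ on $\R^2$ has the dilation mode $Z_0=\frac{2}{p-1}\Gamma+y\cdot\nabla\Gamma$, which grows like $\log|y|$. If $\int hZ_0\neq0$, the solution grows like $\log|y|\,|\int hZ_0|$ (Lemma \ref{lemat}). So you cannot invert ``modulo $\partial_j\Gamma$'' in decaying weighted norms.
\item[(b)] \emph{Strong inner--outer coupling.} We have $\int p\Gamma_+^{p-1}Z_0=\frac{2}{p-1}\int\Gamma_+^p\neq0$. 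Near $P_i$ the outer correction is essentially the constant $\phi_{out}(P_i)=O(\ve^{1+\sigma^*})$, so its $Z_0$-projection has that size. This is far larger than the bound $(\ve\mu_i)^{1+\sigma}|\log\ve|^{2+2\sigma}$ on $d_{i0}$ that the fixed point requires.
\item[(c)] \emph{The paper's fix.} It splits off an explicit radial piece $\phi_{i,2}=d_{i0}\hat{\phi}_2$. It then chooses $\mu_i^*$ through \eqref{scalingpertexplicit}, $-\kappa_i\nu'(1)\log(1+\mu_i^*/\mu_i^0)=\phi_{out}(P_i)$, which cancels this projection.
\end{itemize}
This radial coupling is the real obstacle, and your scheme has nothing to absorb it. The regularity of the free boundary, which you flagged, is not: the paper sidesteps it by taking $\gamma>3$. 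It then shows in Lemma \ref{lemmaout} that the nonlinear terms vanish outside the inner regions, so the support stays compact.
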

We now present some remarks on Theorem \ref{maintheorem} and its relation to the existing literature.
\begin{remark}
Theorem \ref{maintheorem} provides the first smooth multi-vortex helical solution $\vec{\omega}_{\ve}(x,\tau)$ of \eqref{euler} in the whole space $\R^3$ whose vorticity concentrates near collapsing filaments as $\ve \to 0$, while the associated cross-sectional vorticity remains compactly supported in $\R^2$ for all times. More precisely, for all  $\ve>0$ sufficiently small, $\alpha>0$ given by \eqref{uniformspeed} and $P_i$ as in \eqref{pointstheorem}, it holds
\begin{equation}\label{remark1prop}
\operatorname{supp}\vec{\omega}_{\ve}(\cdot,x_3,\tau)\subset \bigcup_{i=1}^{N} B_{\ve|\log\ve|^2}\left(R_{\frac{x_3}{h}-\alpha\tau}P_i\right),
\quad R_{\theta}=\begin{pmatrix}\cos\theta&-\sin\theta\\\sin\theta&\cos\theta\end{pmatrix},
\end{equation}
for all $(x_3,\tau)\in\R\times\R,$ hence each planar cross-section of $\vec{\omega}_{\ve}(x,\tau)$ is supported in a union of $N$ disjoint compact balls. Consequently, for every $\tau\in\R$, the solution $\vec{\omega}_{\ve}(x,\tau)$ remains confined within $N$ non-overlapping helical tubes of radius $\ve|\log\ve|^2$, whose vortex cores collapse onto a single helix as $\ve \to 0$.
\end{remark}
\begin{remark}
A related multi-vortex helical solution of \eqref{euler} in bounded domains is obtained in \cite{donati2024dynamics}, where the authors consider vorticity initially concentrated near helices of distinct radii and show that the concentration persists in a finite time interval $\tau \in [0,T]$, with cross-sectional vorticity supported in a thin annulus. Nevertheless, their filaments remain uniformly separated at distance $\mathcal{O}(1)$, thus the interaction between filaments is asymptotically negligible compared to self-induced motion, effectively reducing the problem to a single-helix setting. In contrast, our filament separation scale is $\mathcal{O}(|\log\ve|^{-1})$, placing the problem in a strongly coupled interaction regime, while concentration persists for all $\tau \in\R$. For a similar reason related to filament separation, the polygonal solution of \cite{MR5036920} does not fit the current framework. Moreover, \cite{cao2023clustered} is concerned with clustered helical solutions in infinite cylinders obtained through variational methods. However, their analysis relies crucially on a $C^{1}$ expansion of the Green’s function of a uniformly elliptic operator in bounded planar domains. Since this expansion fails in the whole $\R^2$ due to the loss of uniform ellipticity, their approach does not seem to be directly adaptable to our setting.
\end{remark}

\subsection{Helical symmetry with no swirl reduction} 
Our construction is carried out within the class of helically symmetric flows with no helical swirl \cites{MR2505860,MR1717127}, where the Euler dynamics in \eqref{euler} reduces to a transport-elliptic system in $\R^2$. More precisely, writing $x=(x',x_3)\in\R^3$, $x'=(x_1,x_2)\in\R^2$ and  $t=|\log\ve|^{-1}\tau$, we consider the pair $\left(w(x',\tau),\psi(x',\tau)\right)$ solving
\begin{equation}\label{PB}
		\begin{aligned}
			\left\{
			\begin{aligned}
				|\log\ve|\partial_{\tau}\omega + \nabla^\perp \psi \cdot \nabla w &=0 && {\mbox {in}} \quad \R^2 \times (-\infty, +\infty),
				\\
				- \nabla\cdot (K \nabla \psi) &= w && {\mbox {in}} \quad \R^2 \times (-\infty,+\infty),
			\end{aligned}
			\right.
		\end{aligned}
	\end{equation}
	with $(a,b)^\perp = (b,-a)$ and $K (x_1,x_2) $ denoting the coefficient matrix
\[
	K(x_1 , x_2 ) = \frac{1}{h^2+x_1^2+x_2^2}
	\left(
	\begin{matrix}
		h^2+x_2^2 & -x_1  x_2\\
		-x_1 x_2 & h^2+x_1^2
	\end{matrix}
	\right).\]
If $w(x',\tau)$ solves \eqref{PB}, then the vorticity vector
	\begin{align}
\label{omegaHelical}
		\vec \omega(x,\tau)  =  \frac{1}{h} w\left( R_{-\frac {x_3}h} x',\tau\right)  \, \begin{pmatrix}-x_2\\x_1\\h\end{pmatrix}, \quad R_{\theta}=\begin{pmatrix} \cos\theta& -\sin\theta\\ \sin\theta& \cos\theta \end{pmatrix},
	\end{align}
	is helically symmetric with no swirl and satisfies \eqref{euler}. Moreover, seeking rotating solutions of the form
\begin{equation}
\label{rotansatz}
w (x', \tau) = W \left( R_{ \alpha \tau } x' \right), \quad \psi (x',\tau) = \Psi \left( R_{ \alpha \tau } x' \right), \quad x'=(x_1,x_2)\in\R^2,
\end{equation}
reduces \eqref{PB} to the semilinear elliptic equation
\begin{equation}\label{semilin}
\nabla_{\tilde{x}}\cdot\left(K(\tilde x)\nabla_{\tilde{x}}\Psi\right)+F\left(\Psi-\frac{\alpha}{2}|\log\ve||\tilde{x}|^2\right)=0 \quad \mbox{in} \quad \R^2, \quad \tilde{x}=R_{\alpha\tau}x'.
\end{equation}
Our aim is to find a suitable smooth nonlinear function $F$ and a stream function $\Psi$ for \eqref{semilin}, so that the 
vorticity $W(\tilde x)\defeq F\left(\Psi-\frac{\alpha}
{2}|\log\ve||\tilde x|^2\right)$
satisfies
\[W(\tilde x)\sim \sum_{i=1}^{N}\kappa_i\delta_{P_i},\quad \operatorname{supp}W(\tilde x) \subset \bigcup_{i=1}^{N} B_{\tilde{\rho}_{\ve}}(P_i) , \]
with $P_i$ as in \eqref{formpoints} and $\tilde{\rho}_{\ve}=o\left(|\log\ve|^{-1}\right)$ as $\ve\to 0$. In this way, the cross-sections are pairwise disjoint since $\min\limits_{i\neq j}|P_i-P_j|\geq\frac{d}{|\log\ve|}$, and the profile  $w(x',\tau)=W(R_{\alpha\tau} x')$ in \eqref{rotansatz}  
is rigidly rotating with angular speed $\alpha$ as in \eqref{uniformspeed}, while remaining compactly supported for all times in the rotated union of planar vortex cores. Consequently, due to \eqref{omegaHelical} we obtain a smooth $3$D multi-filament solution $\vec{\omega}(x,\tau)$, which is confined in thin non-overlapping helical tubes globally in time.
\begin{remark}\label{decoupling}
Theorem \ref{maintheorem} is proved via an Inner–Outer gluing scheme, in the spirit of \cite{MR4899797}. A key novelty of the present work is the choice of nonlinearity $F$ in \eqref{semilin}, motivated by the equation $\Delta u + u^{\gamma}_{+}=0 \,$ in $\,\R^2$, where $u_{+}\defeq\max(0,u)$ and $\gamma>3$. This setting differs signifincatly from that in \cite{MR4899797}, where $F(s)\sim e^{s}$. While the latter yields rapidly decaying cross-sectional vorticity, the current choice $F(s)\sim s_{+}^{\gamma}$ gives rise to a compactly supported one, as in \eqref{remark1prop}. Moreover, since our construction involves the operator  $\Delta+\gamma\Gamma_{+}^{\gamma-1}$ in $\R^2$, with $\Gamma$ given in \eqref{defGamma}, the presence of the radial kernel element $Z_{0}=\mathcal{O}(\log(2+|y|))$ in \eqref{elemkernel} poses a major technical difficulty, since the quantity $
\int_{\R^{2}} \gamma \Gamma_{+}^{\gamma-1} Z_{0} \neq 0$ induces a strong coupling in the Inner-Outer system (see Proposition \ref{propsec8}). In contrast, the Liouville operator $\Delta+e^{u}$ employed in \cite{MR4899797}, with $\Delta u + e^{u}=0$ in $\R^2$, has a radial kernel element $\widetilde {Z}_{0}=\mathcal{O}(1)$ with $\int_{\R^{2}} e^{u}\widetilde {Z}_{0}=0$, hence the corresponding Inner–Outer system is sufficiently decoupled. To resolve this, we refine the gluing procedure in two main directions. First, we introduce perturbations $\mu_j^{*}$ in the scaling parameters of Proposition \ref{propscaling} to weaken the coupling and obtain solvability of the projected problem in Proposition \ref{propsec8}. Second, to make this modification compatible with the overall gluing scheme, we adapt the solution procedure of the Outer problem (see section \ref{subsecouter}) so that these scaling perturbations are a posteriori sufficiently small in $\ve>0.$
\end{remark}
Following the preceding discussion, in the sequel we write $x$ instead of $\tilde{x}$ in \eqref{semilin} for ease of notation, and dedicate the rest of the paper to solving  the problem
\begin{equation}\label{eqtosolve}
\nabla_{x}\cdot\left(K\nabla_{x}\Psi\right)+F\left(\Psi-\frac{\alpha}{2}|\log\ve||x|^2\right)=0 \quad \mbox{in} \quad \R^2.
\end{equation}
As explained earlier, to establish Theorem \ref{maintheorem} it suffices to select a suitable nonlinearity $F$ and construct a stream function  $\Psi$ for \eqref{eqtosolve}, so that the induced vorticity  $W(x)\defeq F\left(\Psi-\frac{\alpha}{2}|\log\ve||x|^2\right)$ has the properties
\begin{equation}\label{propvorticity}
 F\left(\Psi-\frac{\alpha}{2}|\log\ve||x|^2\right)\sim \sum_{i=1}^{N} \kappa_i \delta_{P_i}, \quad \operatorname{supp}W(x) \subset \bigcup_{i=1}^{N} B_{\ve|\log\ve|^2}(P_i),
 \end{equation}
 where $P_i=(a_i,b_i)\to (r_0,0)$ as $\ve\to0\,$ for all $i=1,\dots,N,$ as can be verified by \eqref{formpoints}. 
\subsection{Structure of the paper}
The organisation of this work is as follows. In Section \ref{Section 2}, we construct a local regularised approximation of the Green’s function of $\nabla_{x}\cdot(K\nabla_{x}\cdot)$ in $\R^2$, based on a radial profile $\Gamma$ solving a semilinear elliptic equation and on a suitable change of variables. Section \ref{section3} extends this construction to a global multi-vortex approximation, while Section \ref{Section 4} is devoted to the properties of the associated scaling parameters. In Section \ref{Section 5}, we choose a compactly supported nonlinearity $F$ and derive the corresponding error estimates. Section \ref{Section 6} introduces the \emph{Inner-Outer gluing scheme} and describes the strategy for perturbing the approximation into an exact solution, relying on the linear theories of Section \ref{Section 7}. Finally, in Section \ref{Section projected} we solve a projected linearised problem, and in Section \ref{reduced} we suitably adjust the vortex centers to obtain a genuine solution.
\section{Local approximate stream function}\label{Section 2} 
Let $h>0 $ and consider a point $P=(a,b)\in\R^2\setminus\{0\}$. This section is concerned with the construction of an approximate  stream function $\Psi$ with the property 
\begin{equation}\label{ori}
-\nabla_{x}\cdot (K\nabla_{x}\Psi) \sim c\delta_P
\end{equation}
locally around $P$, with
\begin{equation}\label{operL}
L_x=\nabla_{x}\cdot (K\nabla_{x}\cdot)
\quad \text{and}\ \quad
K=\frac{1}{h^2+x_1^2+x_2^2}
\begin{pmatrix}
h^2+x_2^2 & -x_1x_2 \\
-x_1x_2 & h^2+x_1^2
\end{pmatrix}.
\end{equation}
As a preliminary step, we analyse the behaviour of $L_x$ in a small neighbourhood of the point $P$, which will enable us to derive a local approximation of the Green’s function for \eqref{operL}. 

Following \cite{MR4899797}, we introduce the linear change of variables
\begin{equation}\label{var}
x-P=A[P]z,\quad
A[P]=
\begin{pmatrix}
\frac{ah}{R\sqrt{h^2+R^2}}
& -\frac{b}{R}\\
\frac{bh}{R\sqrt{h^2+R^2}}
& \frac{a}{R}
\end{pmatrix},
\end{equation}
which normalises the coefficient matrix $K$ in \eqref{operL} at the point $P=(a,b)$.

The following Proposition holds.
\begin{prop}\label{expansionofL}
Let $z$ be the variable defined in \eqref{var}. Then, the operator $L_x$ in \eqref{operL} can be expressed as  
\[L_{x}=\Delta_{z}+B_0,\]
where
\begin{align*}
B_{0}
&= \left(\frac{h^2(R^2-r^2)+z_2^2(h^2+R^2)}{h^2(h^2+r^2)}\right)\partial_{z_1z_1}
+\frac1{(h^2+r^2)}\left(\left(z_1\frac h{\sqrt{h^2+R^2}}+R\right)^2-r^2\right)\partial_{z_2z_2} \nonumber\\
 &\hspace{5mm}- 2\frac{\sqrt{h^2+R^2}}{h(h^2+r^2)}z_2\left(z_1\frac{h}{\sqrt{h^2+R^2}}+R\right)
\partial_{z_1z_2}
 \nonumber\\
&\hspace{5mm}-\frac{z_1(h^2+R^2)+Rh\sqrt{h^2+R^2}}{h^2(h^2+r^2)}\left(1+\frac{2h^2}{h^2+r^2}\right)
\partial_{z_1}
 - \frac{z_2}{h^2+r^2}\left(1+\frac{2h^2}{h^2+r^2}\right)\partial_{z_2},
\end{align*}
and
\begin{equation}\nonumber
r^2=|x|^2=R^2+2R\frac h{\sqrt{h^2+R^2}}z_1 + \frac{h^2}{h^2+R^2}z_1^2+z_2^2.
\end{equation}
\end{prop}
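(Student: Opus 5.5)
The plan is a direct change-of-variables computation, organised so that the normalisation at $P$ becomes transparent. First I rewrite the coefficient matrix in the compact form
\[
K(x)=\frac{h^2 I+x^{\perp}(x^{\perp})^{T}}{h^2+r^2}=I-\frac{x\,x^{T}}{h^2+r^2},\qquad r=|x|,
\]
which follows from $x^{\perp}(x^{\perp})^T=r^2I-xx^T$. Expanding the divergence gives
\[
L_x=\sum_{i,j}K_{ij}\,\partial_{x_ix_j}+\sum_j\Big(\sum_i\partial_{x_i}K_{ij}\Big)\partial_{x_j}.
\]
A short computation gives
\[
\sum_i\partial_{x_i}K_{ij}=-\frac{x_j(3h^2+r^2)}{(h^2+r^2)^2}=-\frac{x_j}{h^2+r^2}\Big(1+\frac{2h^2}{h^2+r^2}\Big),
\]
which already produces the factor appearing in the first-order part of $B_0$.

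Next I apply the linear map $x=P+A[P]z$ of \eqref{var}. By the chain rule, $\nabla_x=A^{-T}\nabla_z$ and $D^2_x=A^{-T}D^2_zA^{-1}$, so that
\[
L_x=\operatorname{tr}\big(A^{-1}KA^{-T}D^2_z\big)+(\operatorname{div}_xK)^TA^{-T}\nabla_z .
\]
Writing $e_r=(a,b)/R$ and $e_\theta=(-b,a)/R$, the columns of $A$ are $\frac{h}{\sqrt{h^2+R^2}}e_r$ and $e_\theta$. Hence
\[
A^{-1}A^{-T}=\operatorname{diag}\Big(\frac{h^2+R^2}{h^2},1\Big),\qquad A^{-1}P=\Big(\frac{R\sqrt{h^2+R^2}}{h},0\Big),
\]
so that $A^{-1}x=A^{-1}P+z$. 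It follows that
\[
A^{-1}KA^{-T}=\operatorname{diag}\Big(\frac{h^2+R^2}{h^2},1\Big)-\frac{(A^{-1}P+z)(A^{-1}P+z)^T}{h^2+r^2}.
\]
At $z=0$ (so $r=R$) this equals the identity. This is the normalisation property, and it shows that $B_0:=L_x-\Delta_z$ consists of $A^{-1}KA^{-T}-I$ contracted against $D^2_z$, plus the first-order term.

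The remaining step is to read off the entries and match them with the stated formula, which is pure bookkeeping. For the $(1,1)$ entry minus $1$, I combine $\frac{h^2+R^2}{h^2}-1-\frac{(z_1+R\sqrt{h^2+R^2}/h)^2}{h^2+r^2}$ over the common denominator $h^2(h^2+r^2)$. I then substitute the expression for $r^2$ in terms of $z$; this expression is itself just $|P+Az|^2$ expanded using the orthonormality of $e_r,e_\theta$. The $(2,2)$ entry minus $1$ is $-\frac{z_2^2}{h^2+r^2}$, which I rewrite via $r^2$ as $\frac{(z_1h/\sqrt{h^2+R^2}+R)^2-r^2}{h^2+r^2}$. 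The off-diagonal entries, counted twice, give the $\partial_{z_1z_2}$ coefficient. Finally, the first-order term is $-\frac{1}{h^2+r^2}\big(1+\frac{2h^2}{h^2+r^2}\big)(A^{-1}x)\cdot\nabla_z$, and multiplying out $(A^{-1}x)_1=z_1+R\sqrt{h^2+R^2}/h$ gives the stated $\partial_{z_1}$ coefficient.

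The only real obstacle is algebraic: recognising that the $(1,1)$ coefficient simplifies to $\frac{h^2(R^2-r^2)+z_2^2(h^2+R^2)}{h^2(h^2+r^2)}$. I would verify this by using $r^2=(z_1h/\sqrt{h^2+R^2}+R)^2+z_2^2$, which turns $(z_1+R\sqrt{h^2+R^2}/h)^2$ into $\frac{h^2+R^2}{h^2}(r^2-z_2^2)$. After this substitution the identity is immediate.
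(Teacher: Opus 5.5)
Your method is sound and, unlike the paper, self-contained: the paper gives no computation and only refers to Proposition 2.1 of \cite{MR4899797}. Your coordinate-free set-up uses $K=I-\frac{xx^{T}}{h^2+r^2}$ and $A=[\lambda e_r,e_\theta]$ with $\lambda=\frac{h}{\sqrt{h^2+R^2}}$. It yields the compact closed form
\[
B_0=\operatorname{tr}\Big(\Big[\operatorname{diag}\big(\tfrac{R^2}{h^2},0\big)-\tfrac{ww^{T}}{h^2+r^2}\Big]D^2_z\Big)-\frac{3h^2+r^2}{(h^2+r^2)^2}\,w\cdot\nabla_z,\qquad w=A^{-1}x=z+\Big(\tfrac{R\sqrt{h^2+R^2}}{h},0\Big).
\]
The normalisation at $z=0$ is immediate from this form. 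Your checks of the three second-order coefficients and of the $\partial_{z_2}$ coefficient against the statement are correct.

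The one step that does not go through is your final claim that multiplying out $(A^{-1}x)_1=z_1+\frac{R\sqrt{h^2+R^2}}{h}$ yields the stated $\partial_{z_1}$ coefficient. It actually yields
\[
-\frac{h^2z_1+Rh\sqrt{h^2+R^2}}{h^2(h^2+r^2)}\Big(1+\frac{2h^2}{h^2+r^2}\Big)\partial_{z_1},
\]
whereas the statement has $(h^2+R^2)z_1$ in the numerator. Your version is the correct one. Take $u(x)=(A^{-1}(x-P))_1$, i.e.\ $v=z_1$. All second-order terms vanish, and $L_xu=(\operatorname{div}_xK)\cdot\nabla_xu=-\frac{3h^2+r^2}{(h^2+r^2)^2}(A^{-1}x)_1$, which must equal the $\partial_{z_1}$ coefficient of $B_0$. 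The printed numerator corresponds to $x\cdot e_r=R+\frac{\sqrt{h^2+R^2}}{h}z_1$. That contradicts the statement's own formula $r^2=(R+\lambda z_1)^2+z_2^2$.

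So the proposition is not literally true as printed, and you should record the corrected coefficient rather than assert agreement. The misprint is harmless downstream. Only the value of this coefficient at $z=0$, namely $-\frac{R}{h\sqrt{h^2+R^2}}\big(1+\frac{2h^2}{h^2+R^2}\big)$, enters the expansion of $B_0$ following \eqref{oper}. The $z_1$-dependent part is absorbed into $\mathcal{O}(|z|)$ there.
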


\begin{proof}
See \cite{MR4899797}*{Proposition 2.1}.
\end{proof}

Given any small $\delta>0$, Proposition \ref{expansionofL} shows that in the bounded region $|z|<\delta$, the operator $L_x$ in \eqref{operL} may be regarded as the Laplace operator $\Delta_z$ up to lower-order perturbations. 

More precisely, we have
\begin{equation}\label{oper}
L_x=\Delta_z+B_{0},
\end{equation}
where $B_{0}$ admits the expansion 
\begin{equation}
\begin{aligned}\nonumber
B_0
&= \left(-2\frac{Rh}{(h^2+R^2)^{\frac 3 2}}z_1+O(|z|^2)\right)\partial_{z_1z_1}
+\mathcal{O}(|z|^2)\partial_{z_2z_2} -\left(2\frac {R}{h\sqrt{h^2+R^2}}z_2+\mathcal{O}(|z|^2)\right)\partial_{z_1z_2}  \nonumber \\
&-\left(\frac{R}{h\sqrt{h^2+R^2}}\left(1+\frac{2h^2}{h^2+R^2}\right)+\mathcal{O}(|z|)\right)
\partial_{z_1} -\left(\frac{z_2}{h^2+R^2}\Big(1+\frac{2h^2}{h^2+R^2}\right)
+\mathcal{O}\left(|z|^2)\right)
\partial_{z_2}.
\end{aligned}
\end{equation}
Furthermore, \eqref{ori} is now formulated as
\begin{equation}\label{oriz}
-(\Delta_z+B_0)\psi = c\,\delta_{0}, 
\qquad 
\psi(z)=\Psi\bigl(P+A[P]z\bigr),
\end{equation}
where $\delta_{0}$ represents a Dirac delta at the origin and $A[P]$ is the matrix employed in the change of variables \eqref{var}. Consequently, an approximate regularisation of the Green’s function for $L_x$ can be obtained through a regularisation of the Green's function of $\Delta_z$. 

Motivated by this observation, for $\gamma>3$ and $s_{+}\defeq\max\{0,s\}$, we consider the semilinear elliptic problem
\begin{equation}\label{modelsemil}
\Delta_z\Gamma + \Gamma_+^\gamma=0 \quad \mbox{in}\ \mathbb{R}^2,
\quad \{\Gamma>0\}= B_1(0),
\end{equation}
which posesses a classical radially symmetric solution of the form
\begin{equation}\label{defGamma}
\begin{aligned}
\Gamma(z)
  =
\left\{\begin{array}{ll}
\nu(|z|)&\mathrm{if~}|z|\leq1\\
\nu'(1)\log|z|&\mathrm{if~}|z|>1
\end{array}
\right.,\\
\end{aligned}
\end{equation}
 with $\nu$ being the unique positive radial ground state solution of
 \[
 \Delta_z
 \nu+\nu^\gamma =0 \quad \text{in} \quad B_1(0), \quad \nu=0 \quad \text{on} \quad \partial B_1(0).
 \]
We emphasise the sign condition $\nu'(1)<0$, which will be invoked in several later stages. 

In addition, for any $\ve,\,\mu >0$, the rescaled profile 
\begin{equation}\label{erescgamma}
\Gamma_{\ve\mu}(z)=\Gamma\left(\frac{z}{\ve\mu}\right)
\end{equation}
is of class $C^1(\overline{B_{\ve\mu}(0)})$ and solves the elliptic equation
\begin{equation}\label{rescaledequation123}
    \Delta_z\Gamma_{\ve\mu}(z)+\frac{1}{\ve^{2}\mu^2}\left(\Gamma_{\ve\mu}(z)\right)_+^\gamma=0
\quad  \mbox{in}\ \mathbb{R}^2.
\end{equation}
In the rest of this section, we focus on the region $|z|<\delta$ for a small $\delta>0$, and define
\begin{equation}\label{defofgammahat}
\widehat{\Gamma}_{\ve\mu}(z)=\Gamma\left(\frac{z}{\ve\mu}\right)-\nu'(1)|\log(\ve\mu)|,
\end{equation}
which satisfies \[-\Delta_z\widehat{\Gamma}_{\ve\mu}(z) \rightharpoonup c\delta_0  \quad  \mbox{as} \quad \ve\mu \to 0, \quad \mbox{with} \quad c=\int_{\R^2} \Gamma^{\gamma}_{+}.\]
Based on the model problem \eqref{rescaledequation123}, we construct an approximate solution of \eqref{oriz} via elliptic singular perturbation methods, with \eqref{defofgammahat} as the reference profile.

We obtain the following Proposition.

\begin{prop}\label{propapprox}
Given $h>0$ and $P=(a,b)\in\R^2\setminus\{0\}$, let $R=\sqrt{a^2+b^2}$. For any $\mu>0$ and the variable $z$ in \eqref{var}, consider the  profile $\widehat{\Gamma}_{\ve\mu}(z)$ in \eqref{defofgammahat} and define the approximate solution of \eqref{oriz}
as \[\psi_{\mu}(z)=  \widehat{\Gamma}_{\ve\mu}(z)\left(1+c_1z_1+c_2|z|^2\right) +\frac{R^3}{2h(h^2+R^2)^{\frac{3}{2}}}H_{1}(z),\]
where \[c_1=\frac{Rh}{2(h^2+R^2)^{\frac{3}{2}}}, \quad c_2=\frac{3h^2R^2+R^4}{8(h^2+R^2)^{3}},\]
and $H_1$ solves  \[\Delta_z H_{1} +\frac{\operatorname{Re}(z^3)}{\ve^2\mu^2|z|^2}\left(\Gamma''\left(\frac{|z|}{\ve\mu}\right)-\frac{\Gamma'\left(\frac{|z|}{\ve\mu}\right)}{\frac{|z|}{\ve\mu}}\right)=0.\]
In the rescaled variable $y=\frac{z}{\ve\mu}$ and for any small $\delta>0$, in the region $|y| < \frac{\delta}{\ve\mu}$ we obtain
\[\ve^2\mu^2L_{x}(\psi_{\mu})= \Delta_y\Gamma(y) + \frac{3Rh^2+R^3}{2h(h^2+R^2)^{\frac{3}{2}}}\ve\mu y_1\left(\Gamma(y)\right)^{\gamma}_{+} + \ve^2\mu^2 E_{*},\]
where $E_{*}$ is a smooth function in $z=\ve\mu y$, uniformly bounded as $\ve\mu \to 0.$

In terms of the $x$ variable, the approximate stream function is given by
\begin{equation}\label{approxinx}
    \Psi_{\mu,P}(x)=\psi_{\mu}\left(A[P]^{-1}\left(x-P\right)\right),
\end{equation}
where $A[P]$ is defined in \eqref{var}.
\end{prop}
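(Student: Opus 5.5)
The plan is to compute $L_x(\psi_\mu)=(\Delta_z+B_0)\psi_\mu$ term by term, using the expansion of $B_0$ from Proposition \ref{expansionofL}. I will then choose $c_1$, $c_2$ and the coefficient of $H_1$ so that every term of order $\ve\mu\,|y|$ and $(\ve\mu)^2|y|^2$ arising from $\widehat\Gamma_{\ve\mu}$ cancels. The exception is one term proportional to $y_1\Gamma_+^\gamma$, which cannot be removed and is kept explicitly. Throughout I write $\Gamma=\Gamma(y)$ with $y=z/(\ve\mu)$, so that $\partial_z=(\ve\mu)^{-1}\partial_y$. The function $\widehat\Gamma_{\ve\mu}$ differs from $\Gamma(y)$ only by a constant. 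Hence $\Delta_z\widehat\Gamma_{\ve\mu}=-(\ve\mu)^{-2}\Gamma_+^\gamma$, and its first and second $z$-derivatives are $(\ve\mu)^{-1}\nabla_y\Gamma$ and $(\ve\mu)^{-2}D^2_y\Gamma$.

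\textbf{Step 1: the product with $(1+c_1z_1+c_2|z|^2)$.} By the Leibniz rule,
\[
\Delta_z\bigl(\widehat\Gamma(1+c_1z_1+c_2|z|^2)\bigr)=(1+c_1z_1+c_2|z|^2)\Delta_z\widehat\Gamma+2\nabla\widehat\Gamma\cdot(c_1e_1+2c_2z)+4c_2\widehat\Gamma .
\]
After multiplying by $\ve^2\mu^2$ this gives:
\begin{itemize}
\item the main term $-\Gamma_+^\gamma(1+c_1\ve\mu y_1+\dots)$;
\item a first-order term $2c_1\ve\mu\,\partial_{y_1}\Gamma$;
\item the terms $4c_2(\ve\mu)^2\bigl(y\cdot\nabla_y\Gamma+\widehat\Gamma\bigr)$.
\end{itemize}
The last group is only $O((\ve\mu)^2|\log\ve\mu|)$. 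It is absorbed by a smooth bounded error once I note that $\widehat\Gamma=\nu'(1)\log|z|$ is a smooth function of $z$ outside $B_{\ve\mu}$, and is bounded inside up to the $|\log|$ constant.

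\textbf{Step 2: the action of $B_0$.} I express $B_0\widehat\Gamma$ in $y$-variables. The first-order coefficient $-\frac{R}{h\sqrt{h^2+R^2}}\bigl(1+\frac{2h^2}{h^2+R^2}\bigr)\partial_{z_1}$ gives a term of order $\ve\mu\,\partial_{y_1}\Gamma$. The linear second-order coefficients, namely $-2\frac{Rh}{(h^2+R^2)^{3/2}}z_1\partial_{z_1z_1}$ and $-\frac{2R}{h\sqrt{h^2+R^2}}z_2\partial_{z_1z_2}$, give terms of order $\ve\mu\,(y_1\partial_{11}\Gamma,\ y_2\partial_{12}\Gamma)$. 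For radial $\Gamma$ I write
\[
\partial_{ij}\Gamma=\Gamma''\frac{y_iy_j}{|y|^2}+\frac{\Gamma'}{|y|}\Bigl(\delta_{ij}-\frac{y_iy_j}{|y|^2}\Bigr).
\]
This splits each such term into a mode-$1$ part, a multiple of $\frac{y_1}{|y|}(\Gamma''+\Gamma'/|y|)$ or $y_1\Delta\Gamma$, and a mode-$3$ part proportional to $\frac{\operatorname{Re}(y^3)}{|y|^2}(\Gamma''-\Gamma'/|y|)$.
\begin{itemize}
\item The mode-$3$ part is cancelled exactly by the definition of $H_1$, with the stated coefficient $\frac{R^3}{2h(h^2+R^2)^{3/2}}$. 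The source term decays like $|z|^{-3}$ outside $B_{\ve\mu}$ and is nonzero only through $\Gamma''-\Gamma'/r$. A solution $H_1$ exists as $\cos(3\theta)$ times an ODE solution, and I will check that it is bounded with $\Delta H_1$ matching and $B_0H_1=O(1)$ in $z$.
\item The mode-$1$ gradient parts, $\Gamma'\frac{y_1}{|y|}$, combine with $2c_1\partial_{y_1}\Gamma$. The value $c_1=\frac{Rh}{2(h^2+R^2)^{3/2}}$ should make the $\partial_{y_1}\Gamma$ coefficient vanish identically outside the core, where $\Delta\Gamma=0$. Inside the core, what remains is a multiple of $y_1\Delta\Gamma=-y_1\Gamma_+^\gamma$. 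Together with $-c_1\ve\mu y_1\Gamma_+^\gamma$ from Step 1, this produces the coefficient $\frac{3Rh^2+R^3}{2h(h^2+R^2)^{3/2}}$.
\end{itemize}

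\textbf{Step 3: second-order terms and the remainder.} At order $(\ve\mu)^2$, the quadratic parts of $B_0$ and the first-order terms acting on $c_1z_1\widehat\Gamma$ produce radial (mode-$0$) and mode-$2$ contributions like $|z|^2\Delta_z\widehat\Gamma$ and $\log|z|$. The mode-$0$ contribution of the type $(\ve\mu)^2(\widehat\Gamma+y\cdot\nabla\Gamma)$ is matched by the choice of $c_2$. Everything else is a smooth function of $z$ times $(\ve\mu)^2$, bounded as $\ve\mu\to0$, and is collected into $E_*$.

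\textbf{Main obstacle.} I expect the main obstacle to be the bookkeeping in Step 2 and Step 3. The delicate point is verifying that, outside $B_1$ in $y$, all $O(\ve\mu)$ and logarithmically large $O((\ve\mu)^2\log)$ terms truly cancel, which is exactly what fixes $c_1$, $c_2$ and the $H_1$ coefficient. The same cancellations also require that $\Gamma$ be only $C^1$ across $|y|=1$, so that $\Gamma''$ jumps there and $E_*$ is smooth only piecewise. I would check this first in polar coordinates, mode by mode, following the computation of \cite{MR4899797}*{Proposition 2.2} with $e^{\Gamma}$ replaced by $\Gamma_+^\gamma$.
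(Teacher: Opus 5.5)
Your strategy is the paper's. You apply the linear part of $B_0$ to $\widehat\Gamma_{\ve\mu}$ and split it, via the radial Hessian formula, into mode-one and mode-three pieces. You cancel the $\Gamma' y_1/|y|$ term with $c_1$ and the $\operatorname{Re}(y^3)$ term with $H_1$, and keep $-\frac{3Rh^2+R^3}{2h(h^2+R^2)^{3/2}}\ve\mu\,y_1\Delta_y\Gamma$. Your mode-one constants are right, and adding all corrections at once through the Leibniz rule, rather than one at a time as the paper does, changes nothing.

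\textbf{The gap: Step 1 fails as written.} You cannot absorb $4c_2(\ve\mu)^2\widehat\Gamma_{\ve\mu}$ into $\ve^2\mu^2E_*$. The function $\widehat\Gamma_{\ve\mu}=\Gamma(y)-\nu'(1)|\log(\ve\mu)|$ equals $\nu'(1)\log|z|$ only for $|z|\ge\ve\mu$, and it is of size $|\log(\ve\mu)|$ on $B_{2\ve\mu}$. So the resulting $E_*$ would not be uniformly bounded. This term is exactly why $c_2$ exists. The constant coefficient $-\frac{R}{h\sqrt{h^2+R^2}}\bigl(1+\frac{2h^2}{h^2+R^2}\bigr)$ of $\partial_{z_1}$ in $B_0$, acting on $c_1z_1\widehat\Gamma_{\ve\mu}$, produces
\[
-\frac{c_1R(3h^2+R^2)}{h(h^2+R^2)^{3/2}}\,\widehat\Gamma_{\ve\mu}=-\frac{3h^2R^2+R^4}{2(h^2+R^2)^3}\,\widehat\Gamma_{\ve\mu}.
\]
This is the only logarithmically large term at order $(\ve\mu)^2$. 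The term $4c_2\widehat\Gamma_{\ve\mu}$ must cancel it, and that is precisely what yields the stated $c_2$.

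Your Step 3 points in this direction, but it contradicts Step 1 and is imprecise about what needs matching. The quadratic parts of $B_0$ only generate $|y|^2D^2_y\Gamma$ and $|y|\nabla_y\Gamma$, which are bounded. The $y\cdot\nabla_y\Gamma$ pieces are bounded as well and need no matching. Only the coefficient of $\widehat\Gamma_{\ve\mu}$ must vanish.

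Two further corrections:
\begin{itemize}
\item $\Gamma''$ does not jump at $|y|=1$. From $\nu''+\nu'/r+\nu^\gamma=0$ and $\nu(1)=0$ you get $\nu''(1)=-\nu'(1)=\Gamma''(1^+)$. In fact $\Delta\Gamma=-\Gamma_+^\gamma\in C^3$ for $\gamma>3$, so no piecewise-regularity issue arises.
\item For $|z|>\ve\mu$, the source term in the $H_1$ equation equals $-2\nu'(1)\cos(3\theta)/|z|$, not $O(|z|^{-3})$. Hence $H_1=O(|z|)$ there rather than $O(|z|^3)$. This is still harmless: $H_1$, $|\nabla H_1|$ and $|z|\,|D^2H_1|$ stay bounded, and since the second-order coefficients of $B_0$ are $O(|z|)$, this gives $B_0[H_1]=O(1)$.
\end{itemize}
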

\begin{proof}
We first consider $\delta>0$ small and introduce the rescaled variable $y=\frac{z}{\ve\mu},$ so that for $|y|<\frac {\delta}{\ve\mu}$ we use \eqref{oper} to write
\begin{equation}\label{Bfirsterror}
\begin{aligned}
\ve^2\mu^2 B_{0}(\ve\mu y)[\widehat{\Gamma}_{\ve\mu}]
=& -\frac{2Rh}{(h^2+R^2)^{\frac 3 2}}\ve \mu y_1\partial_{y_1y_1}\widehat{\Gamma}_{\ve\mu}-\frac{2R}{h\sqrt{h^2+R^2}}\ve \mu y_2\partial_{y_1y_2}\widehat{\Gamma}_{\ve\mu}\\
&- \frac{R}{h\sqrt{h^2+R^2}}\left(1+\frac{2h^2}{h^2+R^2}\right)\ve\mu
\partial_{y_1}\widehat{\Gamma}_{\ve\mu}
+\ve^2\mu^2 E_1,
\end{aligned}
\end{equation}
where $E_1$ is a smooth function in $z=\ve \mu y$ and uniformly bounded as  $\ve\mu\to 0$.

In addition, direct calculations give
\begin{equation}\label{radialders}
\partial_{y_1} \widehat\Gamma_{\ve\mu}=\Gamma' \frac{y_1}{|y|},  \quad
\partial_{y_1y_1}\widehat\Gamma_{\ve\mu}
= \left(\Gamma''-\frac{\Gamma'}{|y|}\right)\frac{y_1^2}{|y|^2}+\frac{\Gamma'}{|y|}, \quad \partial_{y_1 y_2}\widehat\Gamma_{\ve\mu}=\left(\Gamma''-\frac{\Gamma'}{|y|}\right)\frac{y_1 y_2}{|y|^2},
\end{equation}
where $\Gamma'$ and $\Gamma''$ denote radial derivatives of $\Gamma(|y|)=\Gamma_{\ve\mu}(z)$ in \eqref{erescgamma}.

We then turn to complex variables by identifying $y=(y_1,y_2)$ with the complex number $y=y_1+iy_2$. Substituting \eqref{radialders} into \eqref{Bfirsterror} and using the identities
\[y_1y_2^2=\frac{y_1|y|^2}{4}-\frac{\operatorname{Re}(y^3)}{4}, \quad y_1^3=\frac{3y_1|y|^2}{4}+\frac{\operatorname{Re}(y^3)}{4},\] 
we arrive at
\begin{equation}\label{bgamma2}
\begin{aligned}
\ve^2\mu^2 B_{0}(\ve\mu y)[\widehat\Gamma_{\ve\mu}]=&-\frac{Rh}{(h^2+R^2)^{\frac{3}{2}}}\frac{\Gamma'\ve\mu y_1}{|y|}-\frac{R^3+4Rh^2}{2h(h^2+R^2)^{\frac{3}{2}}}\left(\Gamma''+\frac{\Gamma'}{|y|}\right)\ve\mu y_1 \\
&+\frac{\ve\mu R^3}{2h(h^2+R^2)^{\frac{3}{2}}}\left(\Gamma''-\frac{\Gamma'}{|y|}\right) \frac{\operatorname{Re}(y^3)}{|y|^2} +\ve^2\mu^2 E_1,
\end{aligned}
\end{equation}
with $E_1$ as in \eqref{Bfirsterror}. 

To eliminate the first error term in \eqref{bgamma2}, we modify the initial approximation $\widehat{\Gamma}_{\ve\mu}$ by setting \begin{equation}\label{psi1}
\psi_{1}(\ve\mu y)=(1+c_1\ve\mu y_1)\widehat{\Gamma}_{\ve\mu}, \quad c_1=\frac{1}{2}\frac{Rh}{(h^2+R^2)^{\frac 3 2}},
\end{equation}
where we obtain
\begin{align*}
\ve^2\mu^2 L_{x}\left(\psi_{1}\right)
&=\Delta_{y}\Gamma
-\left(\frac{3Rh^2+R^3}{2h(h^2+R^2)^{\frac{3}{2}}}\right)
 \left(\Gamma'' +\frac{\Gamma'}{|y|}\right)\ve\mu y_1
+ \frac{\ve\mu R^3}{2h(h^2+R^2)^{\frac{3}{2}}}
\left(\Gamma'' -\frac{\Gamma'}{|y|}\right)\frac{\operatorname{Re}(y^3) }{ |y|^2}\nonumber \\
& \hspace{4mm} + \ve^2\mu^2 B_{0}(\ve\mu y)\left[c_1\ve\mu y_1\widehat{\Gamma}_{\ve\mu}\right]+\ve^2\mu^2 E_1.
\end{align*} 
In addition, due to the expression
\[\ve^2\mu^2
B_0(\ve\mu y)[c_1\ve\mu y_1 \widehat\Gamma_{\ve\mu}]= -\frac{c_1\ve^2\mu^2 R}{h\sqrt{h^2+R^2}}\left(1+\frac{2h^2}{h^2+R^2}\right)\widehat{\Gamma}_{\ve\mu}+\ve^2\mu^2 E_2,\]
where $E_{2}$ is another smooth function in $\ve \mu y$ which is uniformly bounded as $\ve\mu \to 0$, we infer that the profile in \eqref{psi1} satisfies
\begin{equation}\label{jae}
\begin{aligned}
\ve^2\mu^2 L_{x}\left(\psi_{1}\right)
&=  \Delta_{y}\Gamma
-\frac{3Rh^2+R^3}{2h(h^2+R^2)^{\frac{3}{2}}}\left(\Gamma'' +\frac{\Gamma'}{|y|}\right)\ve\mu y_1
-
 \frac{\ve^2\mu^2 (3h^2R^2+R^4)}{2(h^2+R^2)^3} \widehat{\Gamma}_{\ve\mu}
\\
& \hspace{4mm} + \frac{\ve\mu R^3}{2h(h^2+R^2)^{\frac{3}{2}}}
\left(\Gamma^{''} -\frac{\Gamma'}{|y|}\right)\frac{\operatorname{Re}(y^3) }{|y|^2}
+\ve^2\mu^2(E_1+E_2).
\end{aligned}
\end{equation}
To remove the error term proportional to $\widehat{\Gamma}_{\ve\mu}$ in \eqref{jae}, we first note that  
\[
   \Delta_{y}\left(c_2 \ve^2\mu^2|y|^2\widehat\Gamma_{\ve\mu}\right)
=c_2\ve^2\mu^2\left(4\widehat{\Gamma}_{\ve\mu}+4 y\cdot\nabla_{y}\Gamma+|y|^2\Delta_{y}\Gamma\right),\]
and then adjust the approximation to 
\begin{equation*}
\psi_{2}(\ve\mu y)=\left(1+c_1\ve\mu y_1+c_2\ve^2\mu^2|y|^2\right)\widehat\Gamma_{\ve\mu}, \quad c_2= \frac{3h^2R^2+R^4}{8(h^2+R^2)^3},
\end{equation*}
for which
\begin{equation}\label{errorsecondapprox}
\begin{aligned}
\ve^2 \mu^2 L_{x}\left(\psi_{2}\right) &=  \Delta_{y}\Gamma
-\frac{3Rh^2+R^3}{2h(h^2+R^2)^{\frac 3 2}}
 \left(\Gamma'' +\frac{\Gamma'}{|y|}\right)\ve\mu y_1
  + \frac{\ve\mu R^3}{2h(h^2+R^2)^{\frac 3 2}} \left(\Gamma'' -\frac{\Gamma'}{|y|}\right)\frac{\operatorname{Re}(y^3)}{|y|^2}\\
  &+\ve^2\mu^2(E_1 +E_2+E_3),
\end{aligned}
\end{equation}
with $E_3$ having the same properties as $E_1$ and $E_2$ above.

To cancel the cubic term in \eqref{errorsecondapprox}, we now introduce the radial correction
\[
h_{1}(s)
= s^3\int_s^1\frac{\rm dr}{r^7}
\int_0^r \frac{t^5}{\ve^2\mu^2}\left(\Gamma''\left(\frac{|t|}{\ve\mu}\right) -\frac{\Gamma'\left(\frac{|t|}{\ve\mu}\right)}{\frac{|t|}{\ve\mu}}\right) \rm d t,
\]
which solves
\[h''_{1}+\frac{1}{s} h'_{1}-\frac{9}{s^2}h_{1} +\frac{s}{\ve^2\mu^2}\left(\Gamma''\left(\frac{|s|}{\ve\mu}\right)-\frac{\Gamma'\left(\frac{|s|}{\ve\mu}\right)}{\frac{|s|}{\ve\mu}}\right)=0.\]
We emphasise that the term $\frac{s}{\ve^2\mu^2}\left(\Gamma''\left(\frac{|s|}{\ve\mu}\right)-\frac{\Gamma'\left(\frac{|s|}{\ve\mu}\right)}{\frac{|s|}{\ve\mu}}\right)$ is uniformly bounded as $\ve\mu\to 0$.
Indeed, by \eqref{defGamma} we get $u\left(\Gamma''(u)-\frac{\Gamma'(u)}{u}\right)=\mathcal{O}\left(\frac{1}{u}\right)$ as $u \to \infty,$ while $\lim\limits_{u\to 0}\frac{\Gamma'(u)}{u}=\Gamma''(0)$ implies $\left(\Gamma''(u)-\frac{\Gamma'(u)}{u}\right)=\mathcal{O}(u)$ as $u \to 0.$ In particular, $h_{1}$ is a smooth function that remains uniformly bounded as $\ve\mu \to 0,$ satisfying $h_{1}(s)=\mathcal{O}(s^3)$ as $s\to 0$. 

Writing $y=|y|e^{i\theta}$, we set $H_{1}(z)=h_{1}(|z|)\cos(3\theta)$ to obtain a solution of
\begin{equation*}
\Delta_{y} H_{1}+\ve\mu\left(\Gamma''(y) -\frac{\Gamma'(y)}{|y|}\right)\frac{\operatorname{Re}(y^3)}{|y|^2} =0.
\end{equation*}
This leads us to the final approximation 
\begin{equation}\label{psi3}
\psi_{\mu}(\ve\mu y)=\left(1+c_1\ve \mu y_1+c_2\ve^2\mu^2|y|^2\right)\widehat\Gamma_{\ve\mu}
+\frac{R^3}{2h(h^2+R^2)^{\frac{3}{2}}}H_{1}(\ve\mu y),
\end{equation}
so that \eqref{modelsemil} yields
\begin{equation*}
\begin{aligned}
\ve^2 \mu^2 L_{x}\left(\psi_{\mu}\right)= &\Delta_y\Gamma
+\frac{3Rh^2+R^3}{2h(h^2+R^2)^\frac{3}{2}}
 \ve\mu y_1 \Gamma_{+}^{\gamma}
 +\ve^2\mu^2 E_{*},
\end{aligned}
\end{equation*}
with $E_{*}$ an additional smooth function in $\ve\mu y$, uniformly bounded as $\ve\mu \to 0$. 

To this end, we use \eqref{var} to express the approximate stream function \eqref{psi3} around $P=(a,b)$ in the original coordinates $x=(x_1,x_2)$ as
\begin{equation*}
\Psi_{\mu,P}(x)=\psi_{\mu}\left(A[P]^{-1}(x-P)\right).
\end{equation*}
\end{proof}
\section{Construction of a global multi-vortex approximation}\label{section3}
Given an integer $N\geq 1$, in this section we construct an approximate stream function for an $N$-vortex solution of \eqref{eqtosolve}. The approximation is designed to resemble a superposition of Dirac masses concentrated at $N$ distinct points in the plane, whose pairwise distance shrinks as $\ve\to 0.$ As will be shown below, it is obtained by combining suitably rescaled copies of the building block introduced in \eqref{approxinx}, centered at each vortex point. 

More specifically, we let $P_j=(a_j,b_j),\,j=1,\dots,N$ be as in 
\eqref{formpoints}, consider positive scaling parameters $\mu_j>0$ to be determined, and employ \eqref{approxinx} to define
\begin{equation}\label{profj}
\Psi_j(x)=\Psi_{\mu_j,P_j}(x).
\end{equation}
 Using Proposition \ref{propapprox}, for any $\delta>0$ small and the variable $z=A[P_j]^{-1}(x-P_j)=\ve\mu_j y$  in \eqref{var}, for each $j\in\{1,\dots,N\}$ we infer that
\begin{equation}\label{erroraroundeachj}
\ve^2\mu_j^2L_{x}(\Psi_j)=\Delta_{y}\Gamma +\frac{3R_jh^2+R_j^3}{2h(h^2+R_j^2)^{\frac 3 2}}\ve \mu_j y_1 \Gamma^{\gamma}_{+} +\ve^2\mu_j^2 E_{*,j}, \quad |y|<\frac{\delta}{\ve\mu_j},
\end{equation}
where $R_j=\sqrt{a_j^2+b_j^2}$ and $E_{*,j}$ is a smooth function in $\ve\mu_j y,$ which is uniformly bounded as $\ve\mu_j \to 0.$ 

Nevertheless, since the points $P_j$ cluster near $(r_0,0)$ on a scale $\mathcal{O}(|\log\ve|^{-1})$, we multiply each $\Psi_j$ by a smooth cut-off function $\eta_0:\R^2\to[0,1]$ 
to obtain the profile
\[\eta_{0}(x)\sum_{j=1}^{N}\kappa_j\Psi_j(x),\] 
where 
\begin{equation*}
    \eta_{0}(x)=\eta\left(|x-(r_0,0)|\right),
\end{equation*}
with $\eta:\R\to [0,1]$ denoting a smooth cut-off satisfying 
\begin{equation}\label{defeta}
\eta(s)=1 \quad \mbox{for} \quad s\leq \frac 1 2, \quad \eta(s)=0 \quad \mbox{for}\quad s \geq 1.
\end{equation}
In this way, the approximate stream function is extended to the whole $\R^2$, while setting $z_j=([z_j]_1,[z_j]_2)=A[P_j]^{-1}(x-P_j)$ and using \eqref{erroraroundeachj}, a direct computation yields 
\[L_{x}\left(\eta_0\sum_{j=1}^{N}\kappa_j\Psi_j\right)=\eta_{0}\sum_{j=1}^{N}\kappa_j\left(\Delta_{z_j}\Gamma_{\ve\mu_j}+\frac{3R_j h^2+R_j^3}{2h(h^2+R_j^2)^{\frac 3 2}}\frac{[z_j]_1}{\ve^2\mu_j^2}(\Gamma_{\ve\mu_j})^{\gamma}_{+}\right) +g(x),
\]
where 
\[g(x)=\eta_{0}\sum_{j=1}^{N}\kappa_j E_{*,j}+\sum_{j=1}^{N}\kappa_j\left[L_{x}\left(\eta_{0}\Psi_j\right)-\eta_{0}L_{x}\left(\Psi_j\right)\right].
\]
One can verify that the function $g(x)$ is compactly supported, with  $\|g\|_{L^{\infty}\left(\R^2\right)} \leq C$ for some $C>0.$ 

To proceed, we refine the preliminary ansatz  $\eta_{0}\sum\limits_{j=1}^{N}\kappa_j\Psi_j$ by adding a global term that eliminates the bounded error term $g(x)$, resulting in a more accurate approximate solution. 

In particular, we let $H_{2\ve}(x)$ be a solution of
\begin{equation*}
L_{x}(H_{2\ve})+g(x)=0 \quad \text{in} \quad \R^2,
\end{equation*}
whose existence follows from the linear theory of Proposition \ref{propouter}, and which satisfies the growth estimate
\begin{equation}\label{boundh2e}
|H_{2\ve}(x)|\leq C\left(1+|x|^2\right)\quad \forall x\in\R^2.
\end{equation}
In addition, we fix the additive constant by imposing the condition
\begin{equation*}
    H_{2\varepsilon}(r_0,0)=0.
\end{equation*}
In terms of the variable $z_j=A[P_j]^{-1}(x-P_j),\,z_j=([z_j]_1,[z_j]_2),$ we define the final approximate stream function 
\begin{equation}\label{globalapprox}
    \Psi_0(x)=\eta_{0}(x)\sum_{j=1}^{N}\kappa_j\left[\widehat{\Gamma}_{\ve\mu_j}(z_j)\left(1+c_{1,j}[z_j]_1+c_{2,j}|z_j|^2\right)+\frac{R_j^3}{2h(h^2+R_j^2)^{\frac 3 2}}H_{1}(z_j)\right] +H_{2\ve}(x),
\end{equation}
where $\widehat\Gamma_{\ve\mu_j}$ is given by \eqref{defofgammahat}, while the function $H_{1}$ and the constants $c_{1,j}$,\,$c_{2,j}$, \,$j=1,\dots,N$ are as in Proposition \ref{propapprox}, with $P=(a,b)$ replaced by $P_j=(a_j,b_j)$. 

Collecting the above, we conclude that the function $\Psi_0$ in \eqref{globalapprox} satisfies 
\begin{equation}\label{lxofglobal}
    L_{x}(\Psi_0)= \eta_0\sum_{j=1}^{N}\kappa_j\left(\Delta_{z_j} \Gamma_{\ve\mu_j}+\frac{3R_jh^2+R_j^3}{2h(h^2+R_j^2)^{\frac 3 2}}\frac{[z_j]_1}{\ve^2\mu_j^2}\left(\Gamma_{\ve\mu_j}\right)^{\gamma}_{+}\right), \quad z_j=A[P_j]^{-1}(x-P_j),
\end{equation}
which in view of \eqref{defGamma}-\eqref{rescaledequation123}, the result in \eqref{lxofglobal} further yields
\begin{equation}\label{Lxsupport}
\operatorname{supp}
L_x(\Psi_0)\subset\bigcup_{j=1}^N \{x\in\mathbb{R}^2:\ |A[P_j]^{-1}(x-P_j)|<\ve\mu_j\}.
\end{equation}
 It is worth recalling that $\eta_0$ denotes the cut-off in \eqref{defeta} and the notation $R_j=\sqrt{a_j^2+b_j^2}$, while we also remark that the profile $\Psi_0$ in \eqref{globalapprox} is smooth and globally defined in $\R^2$, with its dependence on the points $P_1,\dots,P_N$ and the positive scaling parameters $\mu_1,\dots,\mu_N$ determined by \eqref{profj}.
\section{Properties of the scaling parameters}\label{Section 4}
The aim of this section is to choose the scaling parameters $\mu_j,\, j=1,\dots,N$ introduced in \eqref{profj}, and to describe how they depend on the vortex points $P_j$ in \eqref{formpoints}. We use the decomposition
\begin{equation}\label{scalingansatz}
\mu_j=\mu_j^{0}+\mu^{*}_j, \qquad j=1,\dots,N,
\end{equation}
where $\mu_j^{0}$ designates the leading order scaling parameter around $P_j$ and $\mu_j^{*}$ is a small correction.

For each $j\in\{1,\dots,N\}$, the parameter $\mu_j^{0}$ will be chosen so that
\[
\log(\mu^{0}_{j})=\mathcal{O}\left(\log|\log\ve|\right)\quad \text{as }\ve\to 0,
\]
while for the perturbation $\mu_j^{*}$ we assume a priori that it satisfies
\begin{equation}\label{aprioriscalingperp}
\log\bigg(1+\frac{\mu_{j}^{*}}{\mu_{j}^{0}}\bigg)
=\mathcal{O}\left(\ve^{1+\tilde{\sigma}}\right), \quad \mbox{for a small} \quad \tilde{\sigma}>0.
\end{equation}
Although the explicit choice of $\mu_j^{*}$ and the precise value of $\tilde{\sigma}>0$ in \eqref{aprioriscalingperp} are deferred to
Proposition \ref{propsec8}, we stress that these corrections are vital for carrying out the construction, as discussed in Remark \ref{decoupling}. 

Currently, our objective is to explain in more detail the role of the leading order parameters $\mu_j^{0}$. They are chosen so as to partially eliminate the radial terms of size
$\mathcal{O}(\log|\log\ve|)$ arising in the local expansion of
$\Psi_{0}-\frac{\alpha}{2}|\log\ve|\,|x|^{2}$
near each vortex point $P_j$.

More concretely, for $d>0$ as in \eqref{unifdistance}, we fix $\delta>0$ sufficiently small so that
\begin{equation}\label{cond1delta}
\delta < \frac{\sqrt{h^2+r_0^2}}{4h}d,
\end{equation}
and consider the region
\begin{equation}\label{firstdelta}
|A_{i}^{-1}\left(x-P_i\right)|< \frac{\delta}{|\log\ve|},
\end{equation}
where $\eta_{0}(x)= 1$ due to \eqref{defeta}.  

We have the following Proposition.
\begin{prop}\label{propscaling}
For each $j=1,\dots,N$, let $A_j=A[P_j]$ as in \eqref{var}, evaluated at the point $P_j$ in \eqref{formpoints}.  Fix $i\in\{1,\dots,N\},$ and let $\mu_i=\mu_i^0 +\mu_i^*$ satisfy
\begin{equation}\label{scalingmain}
\begin{aligned}
-\kappa_i\nu'(1)\log\left(\mu_{i}^{0}\right)&=\sum_{j\neq i} \kappa_j\nu'(1)\log\left(|A_j^{-1}\left(P_i-P_j\right)|\right)\bigg[1+c_{1,j}\left[A_j^{-1}\left(P_i-P_j\right)\right]_{1}+c_{2,j}\left|A_j^{-1}\left(P_i-P_j\right)\right|^2\bigg]\\
&\hspace{4mm}+H_{2\ve}(P_i),
\end{aligned}
\end{equation}
and
\begin{equation}\label{scalingpert}
-\kappa_i\nu'(1)\log\left(1+\frac{\mu_i^{*}}{\mu_i^{0}}\right)=\mathcal{O}\left(\ve^{1+\tilde\sigma}\right)\quad \mbox{for some} \, \tilde\sigma>0,
\end{equation}
with 
\begin{equation}\label{constantsj}
    c_{1,j}=\frac{R_j h}{2(h^2+R_j^2)^{\frac 3 2}},\quad c_{2,j}=\frac{3h^2R_j^2+R_j^4}{8(h^2+R_j^2)^3}, \quad R_j=\sqrt{a_j^2+b_j^2}, \quad j=1,\dots,N.
\end{equation}
For $\delta>0$ as in \eqref{cond1delta} and the variable \[z=A_i^{-1}(x-P_i),\, z=\ve\mu_i y,\] in the region $|z|<\frac{\delta}{|\log\ve|}$ the approximate stream function $\Psi_{0}$ in \eqref{globalapprox} has the expansion
\begin{equation}\label{exppsiminusfinal}
\begin{aligned}
&\frac{1}{\kappa_{i}}\left(\Psi_{0}-\frac{\alpha}{2}|\log\ve||x|^2\right)=\\
&\left(1+c_{1,i}\ve\mu_{i}y_1+c_{2,i}\ve^2\mu_{i}^2|y|^2\right)\Gamma(y) -\frac{\alpha}{2\kappa_{i}}|\log\ve||P_i|^2-\nu'(1)|\log\ve|+\nu'(1)\log\left(1+\frac{\mu_{i}^{*}}{\mu_{i}^{0}}\right)\\
&+\ve \mu_{i}y_1\Bigg[|\log\ve|\left(-c_{1,i}\nu'(1)-\frac{\alpha R_i h}{\kappa_i\sqrt{h^2+R_i^2}}\right)+\nu'(1)c_{1,i}\log(\mu_{i})\\
&\hspace{14mm}+\sum_{j\neq i}\frac{\kappa_j}{\kappa_i}\nu'(1)\frac{[A_j^{-1}(P_i-P_j)]_1}{|A_j^{-1}(P_i-P_j)|^2}\left(1+c_{1,j}[A_j^{-1}(P_i-P_j)]_1+c_{2,j}|A_{j}^{-1}(P_i-P_j)|^2\right)\\
&\hspace{14mm}+\sum_{j\neq i}\frac{\kappa_j}{\kappa_{i}}\nu'(1)\log\left(|A_j^{-1}\left(P_i-P_j\right)|\right)\left(c_{1,j}+2c_{2,j}[A_j^{-1}(P_i-P_j)]_1\right)+\frac{1}{\kappa_i}\left(A_{i}(1,0)^{T}\right)\nabla H_{2\ve}(P_i)\Bigg]\\
&+\ve \mu_i y_2 \Bigg[\sum_{j \neq i}\frac{\kappa_j}{\kappa_i}\nu'(1)\frac{[A_j^{-1}(P_i-P_j)]_{2}}{|A_j^{-1}(P_i-P_j)|^2}\left(1+c_{1,j}[A_j^{-1}(P_i-P_j)]_{1}+c_{2,j}|A_j^{-1}(P_i-P_j)|^2\right)\\
&\hspace{14mm}+\sum_{j\neq i}\frac{2\kappa_j}{\kappa_{i}}\nu'(1)\log\left(|A_j^{-1}\left(P_i-P_j\right)|\right)c_{2,j}[A_j^{-1}(P_i-P_j)]_2+\frac{1}{\kappa_i}\left(A_i(0,1)^{T}\right)\nabla H_{2\ve}(P_i)\Bigg]\\
&+\mathcal{O}\left(\ve^2 \mu_i ^2 |\log\ve|^2 |y|^2 \right) \quad \mbox{as} \quad \ve \to 0.
\end{aligned}
\end{equation} 
\end{prop}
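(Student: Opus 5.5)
This expansion is proved by Taylor-expanding each ingredient of $\Psi_0$ around $P_i$ in the variable $z=A_i^{-1}(x-P_i)=\ve\mu_i y$, on the region $|z|<\delta/|\log\ve|$. There $\eta_0\equiv1$ by \eqref{defeta}, since $|P_i-(r_0,0)|=\mathcal{O}(\log|\log\ve|/|\log\ve|)$. We then collect the constant, linear and quadratic contributions. The terms to handle are: the self term $j=i$; the interaction terms $j\neq i$; the correction $H_{2\ve}$; and the rotation term $-\frac{\alpha}{2}|\log\ve||x|^2$.

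\emph{Self term.} With $z=\ve\mu_i y$ and $\mu_i=\mu_i^0+\mu_i^*$, we have
\[\widehat\Gamma_{\ve\mu_i}(z)=\Gamma(y)-\nu'(1)|\log\ve|+\nu'(1)\log\mu_i^0+\nu'(1)\log\bigl(1+\mu_i^*/\mu_i^0\bigr).\]
Multiplying by $(1+c_{1,i}\ve\mu_i y_1+c_{2,i}\ve^2\mu_i^2|y|^2)$ gives the first line of \eqref{exppsiminusfinal}. It also produces a linear term $\ve\mu_i y_1\,c_{1,i}\bigl(-\nu'(1)|\log\ve|+\nu'(1)\log\mu_i\bigr)$, where $\log\mu_i^0$ is combined with the $\mathcal{O}(\ve^{1+\tilde\sigma})$ correction into $\log\mu_i$ up to negligible error. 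The remaining contributions are quadratic terms of size $\ve^2\mu_i^2|y|^2|\log\ve|$. The $H_1$ term is $\mathcal{O}(|z|^3)$, since $h_1(s)=\mathcal{O}(s^3)$, and is absorbed into the error.

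\emph{Interaction terms.} For $j\neq i$ we write $z_j=A_j^{-1}(x-P_j)=A_j^{-1}(P_i-P_j)+A_j^{-1}A_i z$. The key geometric point is that $A_j^{-1}A_i=I+\mathcal{O}(|\log\ve|^{-1})$, because $P_i-P_j=\mathcal{O}(|\log\ve|^{-1})$. Since $|A_j^{-1}(P_i-P_j)|\ge c\,d/|\log\ve|$ by \eqref{unifdistance}, while $|z|<\delta/|\log\ve|$ with $\delta$ as in \eqref{cond1delta}, we get $|z_j|>\ve\mu_j$. Hence $\widehat\Gamma_{\ve\mu_j}(z_j)=\nu'(1)\log|z_j|$ exactly, the $\log(\ve\mu_j)$ contributions cancelling by \eqref{defGamma} and \eqref{defofgammahat}. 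We Taylor-expand
\[\nu'(1)\log|z_j|\,(1+c_{1,j}[z_j]_1+c_{2,j}|z_j|^2)\]
to first order at $z_j^0=A_j^{-1}(P_i-P_j)$. The zeroth-order term combines with $H_{2\ve}(P_i)$ and, by \eqref{scalingmain}, cancels against the $\nu'(1)\log\mu_i^0$ term of the self part. The gradient terms give exactly the bracketed sums multiplying $\ve\mu_i y_1$ and $\ve\mu_i y_2$: the first sum comes from $\nabla\log|z_j|=z_j/|z_j|^2$, the second from differentiating the polynomial factor.

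The second derivatives of $\log|z_j|$ are of size $|z_j^0|^{-2}\sim|\log\ve|^2$. This is what produces the error $\mathcal{O}(\ve^2\mu_i^2|\log\ve|^2|y|^2)$, and it is the dominant term in the remainder, so it must be tracked carefully. A genuinely delicate point is the discrepancy $A_j^{-1}A_i-I=\mathcal{O}(|\log\ve|^{-1})$ multiplying the gradient $\mathcal{O}(|\log\ve|)$. This yields an $\mathcal{O}(1)\cdot\ve\mu_i|y|$ contribution, which is not of the displayed form. I would first try to absorb it by noting that the sums in \eqref{exppsiminusfinal} are written with the exact $A_j$ rather than $A_i$, so the gradient should be computed as $(A_j^{-1}A_i)^T\nabla$. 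Then I would check whether the difference is lower order relative to the displayed $|\log\ve|$-sized terms, or is tacitly included in the bracket.

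\emph{$H_{2\ve}$ and rotation.} For $H_{2\ve}$, a smooth function with bounded derivatives near $(r_0,0)$, we have
\[H_{2\ve}(P_i+A_i z)=H_{2\ve}(P_i)+(A_i z)\cdot\nabla H_{2\ve}(P_i)+\mathcal{O}(|z|^2),\]
which gives the $(A_i e_k)\cdot\nabla H_{2\ve}(P_i)$ terms. For the rotation term,
\[|x|^2=|P_i|^2+2P_i\cdot A_i z+|A_iz|^2,\]
and computing $P_i\cdot A_i e_1=R_ih/\sqrt{h^2+R_i^2}$ and $P_i\cdot A_i e_2=0$ from \eqref{var} gives the $-\frac{\alpha R_ih}{\kappa_i\sqrt{h^2+R_i^2}}|\log\ve|$ coefficient. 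The quadratic remainder $\alpha|\log\ve||z|^2$ fits the error bound. Dividing by $\kappa_i$ yields \eqref{exppsiminusfinal}.
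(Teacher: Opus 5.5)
Your route is the paper's own: expand $\Psi_0-\frac{\alpha}{2}|\log\ve||x|^2$ term by term as in \eqref{psiminusquad}, cancel the constants with \eqref{scalingmain}, and read off the linear coefficients. Your computations for the $\widehat\Gamma$-parts, for $H_{2\ve}$ and for the rotation term are correct, including $P_i\cdot A_ie_1=R_ih/\sqrt{h^2+R_i^2}$ and $P_i\cdot A_ie_2=0$.

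There is, however, a genuine gap in the treatment of $H_1$. The paper's sketch has the same gap: through \eqref{expanH}, and by listing the $j\neq i$ terms in \eqref{psiminusquad} and then never using them. First, $h_1(s)=\mathcal{O}(s^3)$ is not uniform in $\ve\mu$. For $s>\ve\mu$ one has $\Gamma''(u)-\Gamma'(u)/u=-2\nu'(1)u^{-2}$, so the source term $\frac{s}{\ve^2\mu^2}(\Gamma''-\Gamma'/u)$ equals $-2\nu'(1)/s$; it is not bounded. The explicit formula then gives $h_1(s)=-\frac{\nu'(1)}{4}(s-s^3)+\mathcal{O}((\ve\mu)^4s^{-3})$, and the cubic behaviour $h_1(s)=\mathcal{O}(s^3(\ve\mu)^{-2})$ holds only for $s\le\ve\mu$. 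So the self-term $H_1(\ve\mu_iy)$ has size $\ve\mu_i\min\{|y|,|y|^3\}$ and is not absorbed by the stated remainder.

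Second, and more seriously, you omit $\kappa_j\frac{R_j^3}{2h(h^2+R_j^2)^{3/2}}H_1(z_j)$ for $j\neq i$. At $y=0$ these produce the constant $\sum_{j\neq i}\kappa_j\frac{R_j^3}{2h(h^2+R_j^2)^{3/2}}H_1(A_j^{-1}(P_i-P_j))$, of order $|\log\ve|^{-1}$; it would be nonzero even if $H_1$ were cubic. The condition \eqref{scalingmain} as written does not cancel it, and the display has no slot for it. It must be added to the right-hand side of \eqref{scalingmain}. Otherwise it reappears in Case 1 of Proposition \ref{errorprop1} as an error $\gamma\Gamma_+^{\gamma-1}\,\mathcal{O}(|\log\ve|^{-1})\gg\ve\mu_i|\log\ve|$, with nonzero $Z_0$-projection.

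The discrepancy you flagged is also real, and neither of your proposed resolutions removes it. The exact coefficient of $\ve\mu_iy_k$ coming from $\nu'(1)\log|z_j|$ is $\nu'(1)\,z_j^0\cdot A_j^{-1}A_ie_k/|z_j^0|^2$, with $z_j^0=A_j^{-1}(P_i-P_j)$. The display instead has $\nu'(1)[z_j^0]_k/|z_j^0|^2$. Since $A_j^{-1}A_i-I=\mathcal{O}(|\log\ve|^{-1})$ and $|z_j^0|^{-1}\sim|\log\ve|$, the difference is $\mathcal{O}(1)$ and generically nonzero. For instance, for $P_i,P_j$ on the $x_1$-axis, the $j$-th contribution to the $y_1$-bracket is off by a quantity tending to $-\nu'(1)\frac{\kappa_j}{\kappa_i}\frac{r_0h}{(h^2+r_0^2)^{3/2}}$. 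So it is neither covered by the remainder nor hidden in the brackets. The paper discards it when invoking \eqref{expinverse}, and the same happens to the $\mathcal{O}(1)\,\ve\mu_i|y|$ linear parts of $H_1(z_j)$.

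What is actually true is the display with two changes: the $y_1$- and $y_2$-brackets are determined only up to $\mathcal{O}(1)$, and there is an extra $\cos3\theta$-mode term $\frac{R_i^3}{2h(h^2+R_i^2)^{3/2}}H_1(\ve\mu_iy)$. This weaker form is all that is used later in Sections \ref{Section 5} and \ref{reduced}. There the brackets enter only through $|\log\ve|\widehat{\mathcal{A}}_{k,i}$ and their leading parts $\mathcal{F}_{k,i}$, with $\mathcal{O}(\log|\log\ve|)$ contributions absorbed into the right-hand side of \eqref{Reduced1}. The $\cos3\theta$ term is orthogonal to $Z_0,Z_1,Z_2$. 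With the constant added to \eqref{scalingmain} and the expansion read in this form, your computation is complete.
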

\begin{proof}
 To begin with, we fix $i\in\{1,\dots,N\}$ and let $A_{i}=A[P_{i}]$ as in \eqref{var}. Since the points in \eqref{formpoints} are of the form \[P_i=(r_0+\tilde{r},0)+\frac{1}{|\log\ve|}\widehat{P_i}, \quad P_i=(a_i,b_i),\quad \widehat{P_i}=(\hat{a}_i,\hat{b}_i),\] we obtain the matrix expansions 
 \begin{equation}\label{expinverse}
 \begin{aligned}
 A_{i}^{-1}&=\begin{pmatrix}
     \frac{a_{i}\sqrt{h^2+R_i^2}}{R_i h} & \frac{b_i\sqrt{h^2+R_i^2}}{R_i h}\\
     \frac{-b_i}{R_i} & \frac{a_i}{R_i}\end{pmatrix}=\begin{pmatrix}
         \frac{\sqrt{h^2+r_0^2}}{h} & 0 \\ 0 & 1 \end{pmatrix} + \frac{\log|\log\ve|}{|\log\ve|}\widehat{A}_{i},\\ 
A_j^{-1}A_i&= \begin{pmatrix} 1 & 0 \\ 0 & 1 \end{pmatrix} +\frac{\log|\log\ve|}{|\log\ve|}\widehat{I}_{ij}, \quad \mbox{for all}\, i \neq j,
\end{aligned}
\end{equation}
where $\widehat{A}_{i}$ and $\widehat{I}_{ij}$ are $2\times 2$ matrices with smooth entries depending on $(\tilde{r},\hat{a}_i,\hat{b}_i)$ and $(\tilde{r},\hat{a}_i,\hat{b}_i,\hat{a}_j,\hat{b}_j),$ respectively, which are uniformly bounded as $\ve\to 0.$

In addition, in the region $|z|<\frac{\delta}{|\log\ve|}$ and the rescaled variable $y=\frac{z}{\ve\mu_i}$, as $\ve\to 0$ we get 
\begin{equation}\label{expanH}
\begin{aligned}
    H_{2\ve}(x)&=H_{2\ve}(P_i)+\ve \mu_i\left(A_i y\right)\cdot\nabla H_{2\ve}(P_i) +\mathcal{O}(\ve^2 \mu_i^2 |y|^2),\\
    H_1(\ve \mu_i y)&=\mathcal{O}(\ve^3 \mu_i^3 |y|^3),\\
\end{aligned}
\end{equation}
as well as
\begin{equation}\label{expansionGamma}
\begin{aligned}
&\widehat{\Gamma}\left(\frac{A_j^{-1}A_iz+A_{j}^{-1}(P_i-P_j)}{\ve\mu_j}\right)=\nu'(1)\log\left(\frac{|A_j^{-1}A_i\ve\mu_i y +A_j^{-1}(P_i-P_j)|}{\ve\mu_j}\right)-\nu'(1)|\log(\ve\mu_j)|\\
&=\nu'(1)\log(|A_j^{-1}(P_i-P_j)|)+\nu'(1)\frac{A_j^{-1}(P_i-P_j)}{|A_j^{-1}(P_i-P_j)|^2}\cdot \ve\mu_i A_j^{-1}A_iy +\mathcal{O}\left(\frac{|\ve\mu_i A_j^{-1}A_iy|^2}{|A_j^{-1}(P_i-P_j)|^2}\right).
\end{aligned}
\end{equation}
Using the definition of $\Psi_0$ in \eqref{globalapprox}, we write
\begin{equation}\label{psiminusquad}
\begin{aligned}
&\Psi_0 -\frac{\alpha}{2}|\log\ve||x|^2 \\
&=\kappa_i(1+c_{1,i}\ve\mu_iy_1+c_{2,i}\ve^2\mu_i^2|y|^2)\Gamma(y)\\
&\hspace{3mm}+\kappa_i\left(-\nu'(1)|\log\ve|+\nu'(1)\log(\mu^{0}_{i})+\nu'(1)\log\Big(1+\frac{\mu^{*}_i}{\mu^{0}_i}\Big)\right)(1+c_{1,i}\ve\mu_iy_1+c_{2,i}\ve^2\mu_i^2|y|^2)\\
&\hspace{3mm}+\kappa_i\frac{R_i^3}{2h(h^2+R_i^2)^{\frac 3 2}}H_{1}\left(\ve \mu_i y\right) +H_{2\ve}(P_i+A_i\ve\mu_i y)\\
&\hspace{3mm}-\frac{\alpha}{2}|\log\ve||P_i|^2-\frac{\alpha R_ih|\log\ve|}{\sqrt{h^2+R_i^2}}\ve\mu_i y_1-\frac{\alpha}{2}|\log\ve|\ve^2\mu_i^2|A_iy|^2\\
&\hspace{3mm}+\sum_{j\neq i}\kappa_j\left(\nu'(1)\log\left(\frac{|A_j^{-1}A_i\ve \mu_i y +A_j^{-1}(P_i-P_j)|}{\ve \mu_j}\right)-\nu'(1)|\log(\ve\mu_j)|\right)\\
&\hspace{7mm}\times\left(1+c_{1,j}[A_j^{-1}A_i\ve\mu_iy+A_j^{-1}(P_i-P_j)]_1+c_{2,j}|A_j^{-1}A_i\ve\mu_iy+A_j^{-1}(P_i-P_j)|^2\right)\\
&\hspace{3mm}+\sum_{j \neq i}\kappa_j\frac{R_j^3}{2h(h^2+R_j^2)^{\frac 3 2}}H_1\left(A_j^{-1}A_i\ve\mu_i y +A_j^{-1}(P_i-P_j)\right).
\end{aligned}
\end{equation}
Substituting \eqref{expanH} and \eqref{expansionGamma} into \eqref{psiminusquad} and then using  \eqref{expinverse}, we arrive at the expansion in \eqref{exppsiminusfinal}, where constant terms are removed by the choice of $\mu_i^{0}$ in \eqref{scalingmain} and the contribution of $\mu_i^{*}$ is controlled by \eqref{scalingpert}. This concludes the proof.
\end{proof}
\begin{remark}
Combining \eqref{formpoints} with the relation \eqref{scalingmain}, we verify that for each $i=1,\dots,N$, the leading order scaling parameters $\mu_i^{0}$ in Proposition \ref{propscaling} satisfy
    \[-\nu'(1)\log(\mu_i^{0})=\mathcal{O}\left(\log|\log\ve|\right) \quad \mbox{as} \quad \ve \to 0.\]
    In particular, the corresponding vortex core scales satisfy $\ve \mu_i=\mathcal{O}(\ve|\log\ve|^2)$, and are therefore asymptotically much smaller than the pairwise distance $\mathcal{O}(|\log\ve|^{-1})$ between the vortex centers.
 For later use, we set 
 \begin{equation}\label{maxscaling}
     \mu=\max_{i=1,\dots,N}\mu_i.
 \end{equation}
\end{remark}
\section{Choice of the vorticity profile and approximation error estimates}\label{Section 5}
In the present section, the objective is to select a suitable nonlinearity $F$ in \eqref{eqtosolve} so that the corresponding vorticity profile \[W(x)=F\left(\Psi-\frac{\alpha}{2}|\log\ve||x|^2\right)\] satisfies \eqref{propvorticity}, and to quantify the approximation error arising from the substitution of $\Psi_0(x)$ from \eqref{globalapprox} into \eqref{eqtosolve}. 

In this regard, we introduce the error operator
\begin{equation}\label{definitionerroroper}
S(\Psi)(x)=L_{x}(\Psi)+F\left(\Psi-\frac{\alpha}{2}|\log\ve||x|^2\right), \quad x\in\R^2,
\end{equation}
so that $\Psi$ is an exact solution if and only if $S(\Psi)=0$.

Inspired by the structure of \eqref{rescaledequation123}, for $\gamma >3$ and $s_{+}\defeq \max(s,0)$ we define the nonlinearity 
\begin{equation}\label{nonlinearity}
F(s)=\sum_{i=1}^{N}\frac{\kappa_i}{\ve^2\mu_i^2}F_i(s), \quad
F_i(s)=\left(\frac{s}{\kappa_i}+\nu'(1)|\log\ve|+\frac{\alpha}{2\kappa_i}|\log\ve||P_i|^2\right)^{\gamma}_{+}\eta_{i}\left(\frac{s}{\kappa_i}\right),
\end{equation}
where $\eta_{i}:\R\to [0,1],\, i=1,\dots,N,$ are smooth cut-off functions specified as follows.

Consider the inner region $|A_i^{-1}(x-P_i)|<\frac{\delta}{|\log\ve|}$ centered at the vortex point $P_i$. In the rescaled variable $y=\frac{A_i^{-1}(x-P_i)}{\ve\mu_i}$, the boundary of this region corresponds to $|y|=\frac{\delta}{\ve\mu_i|\log\ve|},$ where the expansion derived in Proposition \ref{propscaling} gives
\[\frac{1}{\kappa_i}\left(\Psi_0-\frac{\alpha}{2}|\log\ve||x|^2\right)=\nu'(1)\log(\delta)\left(1+\tilde{s}(\delta)\right)-\nu'(1)\log|\log\ve|-\nu'(1)\log(\mu_i^{0})-\frac{\alpha}{2\kappa_i}|\log\ve||P_i|^2+o(1),\]
with $\tilde{s}(\delta)$ being a smooth bounded function depending on $\delta$, and $o(1) \to 0$ as $\ve \to 0.$

With this in mind, we define the cut-off $\eta_{i}$ as
\begin{equation}\label{cutoffj}
\eta_{i}(s)=\begin{cases}
    1, \quad \mbox{for}\quad s \geq -\nu'(1)\log|\log\ve|-\nu'(1)\log(\mu_i^{0})-\frac{\alpha}{2\kappa_i}|\log\ve||P_i|^2+2\ell_{i,\ve},\\
    0, \quad \mbox{for}\quad s \leq -\nu'(1)\log|\log\ve|-\nu'(1)\log(\mu_i^{0})-\frac{\alpha}{2\kappa_i}|\log\ve||P_i|^2+\ell_{i,\ve},
\end{cases}
\end{equation}
for appropriately chosen $\ell_{i,\ve}=\nu'(1)\log\left(\delta\right)\left(1+\tilde{s}\left(\delta\right)\right)+o(1)$, so that
\begin{equation}\label{rangecutoffi}
\begin{cases}
\eta_{i}\left(\frac{1}{\kappa_{i}}\left(\Psi_0-\frac{\alpha}{2}|\log\ve||x|^2\right)\right)=1 \quad \mbox{for} \quad |A_{i}^{-1}(x-P_i)|\leq \frac{\delta^2}{|\log\ve|},\\
\eta_{i}\left(\frac{1}{\kappa_{i}}\left(\Psi_0-\frac{\alpha}{2}|\log\ve||x|^2\right)\right)=0 \quad \mbox{for} \quad |A_{i}^{-1}(x-P_i)|\geq \frac{\delta}{|\log\ve|}.
\end{cases}
\end{equation}
In the above, $\delta>0$ is independent of $\ve>0$ as in \eqref{firstdelta} and \eqref{cond1delta}, and may be taken smaller if necessary.

The following Proposition holds.
\begin{prop}\label{errorprop1}
    Consider $r_0>0,\, h>0$, and let $\alpha$ be the fixed rotational speed in \eqref{uniformspeed}. For the approximate stream function $\Psi_{0}$ in \eqref{globalapprox}, the nonlinear function $F$ in \eqref{nonlinearity}, and $\delta>0$ sufficiently small satisfying \eqref{cond1delta}, there exists a constant $C>0$ such that for all $\ve >0$ small and each $i\in\{1,\dots,N\}$, the error function in \eqref{definitionerroroper} satisfies
    \[\ve^2\mu_i^2\left| S\left(\Psi_{0}\right)\right|\leq C\ve\mu_i|\log\ve|\left(\Gamma\left(\frac{y}{2}\right)\right)^{\gamma}_{+},\]
where $z=A_i^{-1}(x-P_i), \,y=\frac{z}{\ve\mu_i},$ and $\operatorname{supp} \left(\Gamma(y)\right)^{\gamma}_{+} \subset B_1 (0).$
\end{prop}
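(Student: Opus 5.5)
We compute $S(\Psi_0)$ by adding \eqref{lxofglobal} to the nonlinearity $F$ in \eqref{nonlinearity}, evaluated at $\Psi_0-\frac{\alpha}{2}|\log\ve||x|^2$. The goal is to show that the two pieces cancel at leading order inside each core, and that everything left over carries a factor $\ve\mu_i|\log\ve|$ and is supported in $|y|\le 2$.

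\emph{Localisation.} First we check where each piece lives. By \eqref{Lxsupport}, $L_x(\Psi_0)$ is supported in the union of the sets $|A_j^{-1}(x-P_j)|<\ve\mu_j$. For $F$, we use the cut-offs $\eta_j$ from \eqref{rangecutoffi}: the term $F_j$ vanishes outside $|A_j^{-1}(x-P_j)|<\delta/|\log\ve|$. In the intermediate annulus, the expansion \eqref{exppsiminusfinal} shows that
\[
\tfrac{1}{\kappa_j}\Big(\Psi_0-\tfrac{\alpha}{2}|\log\ve||x|^2\Big)+\nu'(1)|\log\ve|+\tfrac{\alpha}{2\kappa_j}|\log\ve||P_j|^2
= \Gamma(y)\bigl(1+O(\ve\mu_j|y|)\bigr)+O\bigl(\ve\mu_j|\log\ve||y|\bigr)+O\bigl(\ve^{1+\tilde\sigma}\bigr).
\]
Here $\Gamma(y)=\nu'(1)\log|y|<0$ for $|y|>1$. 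So the positive part vanishes once $|y|\ge 1+C\ve\mu_j|\log\ve|$, and in particular for $|y|\ge2$.

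Because $\delta$ satisfies \eqref{cond1delta}, the different inner regions are disjoint. Hence near $P_i$ only the $j=i$ terms survive, and we may fix $i$ and work in $y=z/(\ve\mu_i)$ with $|y|\le 2$.

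\emph{Cancellation.} In this region, multiplying by $\ve^2\mu_i^2$, \eqref{lxofglobal} gives
\[
\kappa_i\Big(\Delta_y\Gamma+c\,\ve\mu_i y_1\Gamma_+^\gamma\Big)=\kappa_i\Big(-\Gamma_+^\gamma+O(\ve\mu_i)\Gamma_+^\gamma\Big),
\]
using \eqref{modelsemil}. The nonlinear term is $\kappa_i(\Gamma+\theta)_+^\gamma$, where
\[
\theta=c_{1,i}\ve\mu_iy_1\Gamma+c_{2,i}\ve^2\mu_i^2|y|^2\Gamma+\ve\mu_i y_1[\cdots]+\ve\mu_i y_2[\cdots]+\nu'(1)\log\bigl(1+\mu_i^*/\mu_i^0\bigr)+O\bigl(\ve^2\mu_i^2|\log\ve|^2\bigr).
\]
The brackets $[\cdots]$ in \eqref{exppsiminusfinal} are $O(|\log\ve|)$: the interaction sums are $O(|\log\ve|)$ because $|A_j^{-1}(P_i-P_j)|\sim|\log\ve|^{-1}$, the terms $\log\mu_i$ and $\nabla H_{2\ve}$ are lower order by \eqref{boundh2e}, and the $\alpha|\log\ve|$ term is $O(|\log\ve|)$ directly. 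The $\mu_i^*$ term is $O(\ve^{1+\tilde\sigma})$ by \eqref{scalingpert}. Hence $|\theta|\le C\ve\mu_i|\log\ve|$ on $|y|\le2$.

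\emph{Estimating the difference.} It remains to bound
\[
\kappa_i\Big[(\Gamma+\theta)_+^\gamma-\Gamma_+^\gamma\Big]+O(\ve\mu_i)\Gamma_+^\gamma .
\]
By the mean value theorem, $|(\Gamma+\theta)_+^\gamma-\Gamma_+^\gamma|\le\gamma\bigl(|\Gamma|+|\theta|\bigr)_+^{\gamma-1}|\theta|$, and this vanishes unless $\Gamma+\theta>0$ or $\Gamma>0$.

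The main obstacle is the boundary layer near $|y|=1$, where $\Gamma_+^\gamma$ is small but the perturbed support may stick out slightly beyond $B_1$. The remedy is to replace $\Gamma(y)$ by $\Gamma(y/2)$. On $|y|\le 1+C\ve\mu_i|\log\ve|$ the function $\Gamma(y/2)$ is bounded below by a positive constant, since $\nu$ is positive on $B_{3/4}$ (and in fact on all of $B_1$). Consequently the whole difference, which is $O(\ve\mu_i|\log\ve|)$ and supported in that ball, is bounded by $C\ve\mu_i|\log\ve|\,\Gamma(y/2)_+^\gamma$. The same bound holds for the $O(\ve\mu_i)\Gamma_+^\gamma$ term.

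Finally, the cut-off $\eta_i$ equals $1$ there, by \eqref{rangecutoffi}. This gives the claimed estimate.
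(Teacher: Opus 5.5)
Your argument is the paper's proof in condensed form. Your localisation step reproduces its Cases 3--5 and the outer part of Case 2: it uses the support of $L_x(\Psi_0)$ from \eqref{Lxsupport}, the cut-offs via \eqref{rangecutoffi}, and the fact that the negativity of $\Gamma(y)=\nu'(1)\log|y|$ dominates the $O(\ve\mu_i|\log\ve||y|)$ corrections once $|y|\geq 1+C\ve\mu_i|\log\ve|$. Your mean-value bound for $(\Gamma+\theta)_+^\gamma-\Gamma_+^\gamma$ reproduces Case 1 and the thin layer. Your remark that $\Gamma(y/2)$ is bounded below there makes explicit how the weight $\Gamma(y/2)_+^\gamma$ arises, which the paper leaves implicit.

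One caveat, inherited from the paper rather than introduced by you: the claim that near $P_i$ only the $j=i$ terms survive is not justified by \eqref{rangecutoffi} as stated. The cut-off $\eta_j$ is one-sided in $s$, and near the core of $P_i$ the quantity $\frac{1}{\kappa_j}\bigl(\Psi_0-\frac{\alpha}{2}|\log\ve||x|^2\bigr)$ exceeds the threshold in \eqref{cutoffj} by about $-\frac{\kappa_i}{\kappa_j}\nu'(1)|\log\ve|$. So for $\kappa_i\kappa_j>0$ one has $\eta_j=1$ there. For example, when $\kappa_i=\kappa_j$ and $|P_i|=|P_j|$, the base of the $\gamma$-power in $F_j$ is $\Gamma(y)+o(1)$, so $F_j$ contributes at the same order as $F_i$. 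Similarly, for $\kappa_j<0$ that quantity tends to $+\infty$ as $|x|\to\infty$. This undermines the vanishing of $F_j$ far away, which is used both in the paper's Case 5 and in your localisation.
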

\begin{proof}
To estimate the approximation error 
\begin{equation}\label{rescalederroroper}
\begin{aligned}
    &\ve^2\mu_i^2 S(\Psi_{0})=\ve^2\mu_i^2 L_{x}(\Psi_{0})+\ve^2\mu_i^2\sum_{j=1}^{N}\frac{\kappa_j}{\ve^2\mu_j^2}F_{j}\left(\Psi_0-\frac{\alpha}{2}|\log\ve||x|^2\right),
\end{aligned}
\end{equation}
with $L_x(\Psi_0)$ and $F_j\left(\Psi_0-\frac{\alpha}{2}|\log\ve||x|^2\right)$ given in \eqref{lxofglobal} and \eqref{nonlinearity}, respectively, we consider five distinct regions in the variable  \[z=A_i^{-1}(x-P_i),\quad z=\ve\mu_i y\] as follows.
\\
\text{\bf Case 1:} $|y|< 1.$
\\
Due to \eqref{cond1delta} and \eqref{rangecutoffi}, in this region we have $\eta_{i}=1,$ and
$\eta_j=0$ for all $j\neq i$ as $|P_i-P_j|\geq \frac{\delta}{|\log\ve|}$ , thus the nonlinear component in \eqref{rescalederroroper} simplifies to 
\begin{equation}\label{approxerrfirst}
\begin{aligned}
&\ve^2\mu_i^2\sum_{j=1}^{N}\frac{\kappa_j}{\ve^2\mu_j^2}\left(\frac{1}{\kappa_j}\left(\Psi_0-\frac{\alpha}{2}|\log\ve||x|^2\right)+\nu'(1)|\log\ve|+\frac{\alpha}{2\kappa_j}|\log\ve||P_j|^2\right)^{\gamma}_{+}\eta_{j}\left(\frac{1}{\kappa_j}\left(\Psi_0-\frac{\alpha}{2}|\log\ve||x|^2\right)\right)\\
&=\kappa_i\Bigg[\Gamma(y)+(c_{1,i}\ve\mu_iy_1+c_{2,i}\ve^2\mu_i^2|y|^2)\Gamma(y)+\nu'(1)\log\left(1+\frac{\mu^{*}_i}{\mu^{0}_i}\right)+\ve\mu_i y_1\mathcal{A}_{1,i}(P)+\ve\mu_i y_2\mathcal{A}_{2,i}(P)\\
&\hspace{10mm}+\mathcal{O}\left(\ve^2\mu_i^2|\log\ve|^2|y|^2\right)\Bigg]^{\gamma}_{+},
\end{aligned}
\end{equation}
where for $P=(P_1,\dots,P_N)$ as in \eqref{formpoints}, we have adopted the notation
\begin{equation}\label{mode1func}
\begin{aligned}
\mathcal{A}_{1,i}(P)&=|\log\ve|\left(-c_{1,i}\nu'(1)-\frac{\alpha R_i h}{\kappa_i\sqrt{h^2+R_i^2}}\right)+\nu'(1)c_{1,i}\log(\mu_{i}^{0})+\nu'(1)c_{1,i}\log\left(1+\frac{\mu^*_{i}}{\mu^{0}_{i}}\right)\\
&\hspace{3mm}+\sum_{j\neq i}\frac{\kappa_j}{\kappa_{i}}\nu'(1)\log\left(|A_j^{-1}\left(P_i-P_j\right)|\right)\left(c_{1,j}+2c_{2,j}[A_j^{-1}(P_i-P_j)]_1\right)\\
&\hspace{3mm}+\sum_{j\neq i}\frac{\kappa_j}{\kappa_i}\nu'(1)\frac{[A_j^{-1}(P_i-P_j)]_1}{|A_j^{-1}(P_i-P_j)|^2}\left(1+c_{1,j}[A_j^{-1}(P_i-P_j)]_1+c_{2,j}|A_{j}^{-1}(P_i-P_j)|^2\right)\\
&\hspace{3mm}+\frac{1}{\kappa_i}\left(A_{i}(1,0)^{T}\right)\nabla H_{2\ve}(P_i),\\
\mathcal{A}_{2,i}(P)&=\sum_{j \neq i}\frac{2\kappa_j}{\kappa_i}\nu'(1)\log\left(|A_j^{-1}(P_i-P_j)|\right)c_{2,j}[A_j^{-1}(P_i-P_j)]_{2}\\
&\hspace{3mm}+\sum_{j \neq i}\frac{\kappa_j}{\kappa_i}\nu'(1)\frac{[A_j^{-1}(P_i-P_j)]_{2}}{|A_j^{-1}(P_i-P_j)|^2}\left(1+c_{1,j}[A_j^{-1}(P_i-P_j)]_{1}+c_{2,j}|A_j^{-1}(P_i-P_j)|^2\right)\\
&\hspace{3mm}+\frac{1}{\kappa_i}\left(A_i(0,1)^{T}\right)\nabla H_{2\ve}(P_i).
\end{aligned}
\end{equation}
In fact, a careful analysis of the terms in \eqref{mode1func} allows us to further write 
\begin{equation}\label{logmode1fun}
    \mathcal{A}_{1,i}(P)=|\log\ve|\,\widehat{\mathcal{A}}_{1,i}(P), \quad 
\mathcal{A}_{2,i}(P)=|\log\ve|\,\widehat{\mathcal{A}}_{2,i}(P),
\end{equation}
for some smooth functions $\widehat{\mathcal{A}}_{1,i}(P),\,\widehat{\mathcal{A}}_{2,i}(P)$ of
$P=(P_1,\dots,P_N)$ satisfying \eqref{formpoints}, which are uniformly bounded as $\ve\to 0$. Then, combining  \eqref{lxofglobal} and \eqref{Lxsupport} with a Taylor expansion of \eqref{approxerrfirst} about $\Gamma(y)$, we obtain
\begin{equation*}
\begin{aligned}
   &\ve^2\mu_i^2 S\left(\Psi_{0}\right)\\
   &= \kappa_i\left(\frac{3R_ih^2+R_i^3}{2h\left(h^2+R_i^2\right)^{\frac3 2}}\right)\ve\mu_i y_1\Gamma^{\gamma}_{+} +\kappa_i\gamma\Gamma^{\gamma-1}_{+}\Bigg[\nu'(1)\log\left(1+\frac{\mu^{*}_i}{\mu^{0}_i}\right)+\left(c_{1,i}\ve\mu_iy_1+c_{2,i}\ve^2\mu_i^2|y|^2\right)\Gamma(y)\\
   &+\ve\mu_i y_1|\log\ve|\widehat{\mathcal{A}}_{1,i}(P)+\ve\mu_i y_2|\log\ve|\widehat{\mathcal{A}}_{2,i}(P)+\mathcal{O}\left(\ve^2\mu_i^2|\log\ve|^2 |y|^2\right)\Bigg]+\mathcal{O}\left(\ve^2\mu_i^2|\log\ve|^2|y|^2\Gamma^{\gamma-2}_{+}\right),
    \end{aligned}
\end{equation*}
where we used \eqref{rescaledequation123}, \eqref{scalingpert}, and the expression $|x|^2=|P_i|^2+\frac{2R_i}{\sqrt{h^2+R_i^2}}\ve\mu_i y_1 +\ve^2\mu_i^2\left( \frac{h^2}{h^2+R_i^2}y_1^2 +y_2^2\right).$

One can then derive the estimate 
\begin{equation*}
\ve^2\mu_i^2 S(\Psi_{0})=\mathcal{O}\left(\ve\mu_i|\log\ve|\right).
\end{equation*}
\text{\bf Case 2:} $1 \leq |y| \leq 2.$
\\
In this annulus, we observe that $L_x(\Psi_0)=0$ due to \eqref{Lxsupport}, while \eqref{cutoffj} again gives $\eta_i=1$ and $\eta_j=0$ for every $j\neq i$. 
Therefore, taking into account  \eqref{scalingpert} and that $\Gamma(1)=0$, the error function evaluated at $|y|=1$ reduces to
\begin{equation*}
\ve^2\mu_i^2 S(\Psi_0) =\kappa_i\left(\nu'(1)\log\Big(1+\frac{\mu_i^{*}}{\mu_i^0}\Big)+\mathcal{O}(\ve\mu_i|\log\ve|)\right)^{\gamma}_{+} =\mathcal{O}\left(\ve\mu_i|\log\ve|\right).
\end{equation*}
For $1<|y|\leq 2$, we instead have $\Gamma(y)=\nu'(1)\log|y|<0,$ hence we obtain
\[\ve^2\mu_i^2S(\Psi_0)=\kappa_i\Big(\nu'(1)\log|y|\big(1+\mathcal{O}(\ve\mu_i)\big)+\mathcal{O}\left(\ve\mu_i|\log\ve|\right)\Big)^{\gamma}_{+}=\mathcal{O}\left(\ve\mu_i|\log\ve|\right).\]
\\
\textbf{Case 3}: $2 < |y| \leq \frac{\delta^2}{\ve\mu_i|\log\ve|}.$
\\
Similarly to the previous case, \eqref{cutoffj} gives $\eta_{i}=1$ and $\eta_j=0$ for all $j\neq i$, while \eqref{Lxsupport}
yields $L_x(\Psi_0)=0$. Therefore, the error function takes the form
\begin{equation*}
\begin{aligned}
\ve^2\mu_i^2 S\left(\Psi_{0}\right)&=\kappa_i\Big(\nu'(1)|\log\ve|\mathcal{K}_{1,i}(y)\Big)^{\gamma}_{+},
\end{aligned}
\end{equation*}
with
\begin{equation}\label{functK1}
\begin{aligned}
\mathcal{K}_{1,i}(y)&=\frac{\log|y|}{|\log\ve|}\left(1+c_{1,i}\ve\mu_i y_1 +c_{2,i}\ve^2\mu_i^2|y|^2\right)+\frac{\log\left(1+\frac{\mu_i^{*}}{\mu_i^0}\right)}{|\log\ve|}+\frac{1}{\nu'(1)}\left(\ve\mu_i y_1 \widehat{\mathcal{A}}_{1,i}(P)+\ve\mu_i y_2\widehat{\mathcal{A}}_{2,i}(P)\right)\\
&\hspace{3mm}+\mathcal{O}\left(\ve^2\mu_i^2|\log\ve||y|^2\right)\\
&=\frac{\log|y|}{|\log\ve|}\left(1+\mathcal{O}(\delta^2|\log\ve|^{-1})\right) + \mathcal{O}\left(\delta^2|\log\ve|^{-1}\right).
\end{aligned}
\end{equation}
In particular, \eqref{functK1} shows that there exists a small $\ve_0>0$ such that $\mathcal{K}_{1,i}(y)>0$ for all $\ve \in (0,\ve_0)$. Since $\nu'(1)|\log\ve|<0$, we deduce that  
\[\ve^2\mu_i^2S(\Psi_0)=0.\]
\text{\bf Case 4:} $\frac{\delta^2}{\ve\mu_i|\log\ve|} < |y| < \frac{\delta}{\ve\mu_i|\log\ve|}.$
\\
In this region, $\eta_{i}\in(0,1),\,\eta_j=0$ for every $j\neq i$, and $L_x(\Psi_0)=0$, while we also get the validity of the asymptotic expansion
\[\frac{\log|y|}{|\log\ve|}=1+\mathcal{O}\left(\frac{\log|\log\ve|}{|\log\ve|}\right).\]
It then follows that
\[\ve^2\mu_i^2S(\Psi_0)=\kappa_i\Big(\nu'(1)|\log\ve|\mathcal{K}_{2,i}(y)\Big)^{\gamma}_{+}\eta_i,\]
with   \[\mathcal{K}_{2,i}(y)=\frac{\log|y|}{|\log\ve|}\left(1+\mathcal{O}(\delta|\log\ve|^{-1})\right)+\mathcal{O}\left(\delta|\log\ve|^{-1}\right).\]
Using again $\nu'(1)|\log\ve|<0$, for all sufficiently small $\ve>0$ we infer that 
\[\ve^2\mu_i^2S(\Psi_0)=0.\]
\\
\text{\bf Case 5:} $\left|\frac{A_i^{-1}(x-P_i)}{\ve\mu_i}\right| \geq \frac{\delta}{\ve\mu_i|\log\ve|}, \,\, \mbox{for all} \,i=1,\dots,N.$
\\
In this unbounded region, it immediately holds 
\begin{equation*}
S(\Psi_0)=0,
\end{equation*}
since $L_{x}(\Psi_0)=0$ and $\eta_i=0$ for all $i=1,\dots,N.$
\end{proof}

\section{Inner-Outer Gluing Procedure}\label{Section 6}
In this section, we consider the approximate stream function $\Psi_{0}$ in \eqref{globalapprox} satisfying the error estimates of Proposition \ref{errorprop1}, and seek an exact solution $\Psi$ of the equation 
\begin{equation}\label{truesol}
  S\left(\Psi\right)(x)=L_{x}\left(\Psi\right)+F\left(\Psi-\frac{\alpha}{2}|\log\ve||x|^2\right)=0 \quad \text{in} \quad \R^2, 
\end{equation}
where the nonlinearity $F$ reads
\[F(s)=\sum_{j=1}^{N}\frac{\kappa_j}{\ve^2\mu_j^2}\left(\frac{s}{\kappa_j} +\nu'(1)|\log\ve|+\frac{\alpha}{2\kappa_j}|\log\ve||P_j|^2\right)^{\gamma}_{+}\eta_{j}\left(\frac{s}{\kappa_j}\right),\]
with $\gamma>3,\,u_+\defeq\max(0,u),$ and $\eta_{j},\,j=1,\dots,N$ the smooth cut-off functions described in \eqref{cutoffj}. 

We solve \eqref{truesol} by means of an \emph{Inner–Outer gluing scheme}, through which we aim to find a true solution $\Psi$ as a small perturbation of the approximate solution $\Psi_0$. We recall that $\Psi_0$ depends on the concentration points $P_1,\dots,P_N$ satisfying   \eqref{formpoints}, and on the scaling parameters $\mu_j=\mu_j^{0}+\mu_j^{*}, \, j=1,\dots,N,$ as stated in Proposition \ref{propscaling}.

More specifically,
we seek a solution $\Psi(x)$
to \eqref{truesol} of the form
\begin{equation}\label{gluingansatz}
\Psi(x)=\Psi_0(x)+\varphi(x), \quad \varphi(x)=\sum_{i=1}^{N}\tilde{\eta}_i(x)\phi_{in,i}(y_i) +\phi_{out}(x), \quad y_i=\frac{A_i^{-1}(x-P_i)}{\ve\mu_i},
\end{equation}
so that for $0<2\delta_1<\delta^2$ with $\delta$ chosen according to \eqref{cond1delta} and $\eta(x)$ given by \eqref{defeta}, the function $\tilde{\eta}_{i}(x)$ in \eqref{gluingansatz} is an additional smooth cut-off defined as
\begin{equation}\label{cutoffgluing}
\tilde{\eta}_i=\eta\left(\frac{|\log\ve|^2|A_i^{-1}(x-P_i)|}{\delta_1}\right).
\end{equation}
In view of the decomposition of the perturbation $\varphi(x)$ in \eqref{gluingansatz}, for the operator in \eqref{truesol} we write 
\begin{equation}\nonumber
\begin{aligned}
S(\Psi_0+\varphi)
 &=0 \inn \,\,\R^2,\\
 S\left(\Psi_0+\varphi\right)&=\sum_{i=1}^N\tilde{\eta}_i\bigg[L_x[\phi_{in,i}]
 +F'\left(\Psi_0-\frac{\alpha}{2}|\log\ve||x|^2\right)(\phi_{in,i}+\phi_{out})
 +S(\Psi_0)
 +N_0(\varphi)\bigg]   \\
&\hspace{4mm} + L_x[\phi_{out}]+\Big(1-\sum_{i=1}^N\tilde{\eta}_i\Big)
\bigg[F'\left(\Psi_0-\frac{\alpha}{2}|\log\ve||x|^2\right)\phi_{out}+S(\Psi_0)
 +N_0(\varphi)\bigg] \\
&\hspace{4mm} + \sum_{i=1}^N\big(L_x[\tilde{\eta}_i\phi_{in,i}]-\tilde{\eta}_iL_x[\phi_{in,i}]\big),
\end{aligned}
\end{equation}
where we introduced the notation
\begin{equation}\label{nol}
\begin{aligned}
 N_{0}(\varphi) &=F\left(\Psi_0-\frac{\alpha}{2}|\log\ve||x|^2+\varphi\right)-F\left(\Psi_0-\frac{\alpha}{2}|\log\ve||x|^2\right)-F'\left(\Psi_0-\frac{\alpha}{2}|\log\ve||x|^2\right)\varphi,\\
 F'(s)&=\sum_{i=1}^N\frac{\gamma}{\ve^2\mu_i^2}\left(\frac{s}{\kappa_i}+\nu'(1)|\log\ve|+\frac{\alpha}{2\kappa_i}|\log\ve||P_i|^2\right)^{\gamma-1}_{+}\eta_{i}\left(\frac{s}{\kappa_i}\right)\\
 &\hspace{3mm}+\sum_{i=1}^{N}\frac{1}{\ve^2\mu_i^2}\left(\frac{s}{\kappa_i}+\nu'(1)|\log\ve|+\frac{\alpha}{2\kappa_i}|\log\ve||P_i|^2\right)^{\gamma}_{+}\eta'_{i}\left(\frac{s}{\kappa_i}\right).  
\end{aligned}
\end{equation}
Rearranging terms, we deduce that $\Psi$ in \eqref{gluingansatz} is an exact solution of \eqref{truesol}, provided that the components $(\phi_{in},\phi_{out})\defeq (\phi_{in,1},\dots,\phi_{in,N},\phi_{out})$ defining $\varphi$ satisfy the coupled system of equations
\begin{align} \label{inner1}
L_x[\phi_{in,i}]+F'\left(\Psi_0-\frac{\alpha}{2}|\log\ve||x|^2\right)(\phi_{in,i}+\phi_{out})
+S(\Psi_0)+N_0\Big(\sum_{i=1}^{N}\tilde{\eta}_i\phi_{in,i}+\phi_{out}\Big)=0,&&\\
\nonumber
\mathrm{for}
\ |A_i^{-1}(x-P_i)|<\frac{2\delta_1}{|\log\ve|^2}, \quad i=1,\dots,N,&&
\end{align}
and
 \begin{equation}
     \begin{aligned} \label{outer1}
 L_x[\phi_{out}]&+\Big(1-\sum_{i=1}^N\tilde{\eta}_i\Big)\bigg[F'\left(\Psi_0-\frac{\alpha}{2}|\log\ve||x|^2\right)\phi_{out}+S(\Psi_0)
 +N_0\Big(\sum_{i=1}^{N}\tilde{\eta}_i\phi_{in,i}+\phi_{out}\Big)\bigg]\\
 &+ \sum_{i=1}^N\left(L_x[\tilde{\eta}_i\phi_{in,i}]-\tilde{\eta}_iL_x[\phi_{in,i}]\right)=0,\quad\mbox{for}\,\,x\in\mathbb{R}^2.
\end{aligned}
\end{equation}
In the sequel, we refer to \eqref{inner1} as the Inner problem and to \eqref{outer1} as the Outer problem. 

To analyse \eqref{inner1} near a vortex point 
$P_i$, we work in the rescaled variable $y=\frac{A_i^{-1}(x-P_i)}{\ve\mu_i}$, with $\mu_i=\mu_i^{0}+\mu_i^{*}$ satisfying \eqref{scalingmain} and \eqref{scalingpert}. Using Proposition \ref{expansionofL}, in the region \[|y|<\frac{2\delta_1}{\ve\mu_i|\log\ve|^2}\]  we obtain \[\ve^{2}\mu_i^2 L_{x}[\phi_{in,i}]=\Delta_{y}\phi_{in,i} +\widetilde{B}_{0}(y)[\phi_{in,i}],\] 
where $\widetilde{B}_{0}(y)[\phi_{in,i}]=\ve^2\mu_i^2 B_{0}\left(\ve \mu_i y\right)[\phi_{in,i}]$;\,see \eqref{oper}.

Moreover, recalling the definition of the functions $\widehat{\mathcal{A}}_{1,i}(P),$ and $\widehat{\mathcal{A}}_{2,i}(P)$ in \eqref{logmode1fun}, in the same region we get the expansion \[\ve^2\mu_i^2 F'\left(\Psi_{0}-\frac{\alpha}{2}|\log\ve||x|^2\right)=\gamma\Gamma^{\gamma-1}_{+}(y)+b_i(y),\] 
where
\begin{equation}\label{defofbi}
\begin{aligned}
b_i(y)&=\gamma(\gamma-1)\Gamma^{\gamma-2}_{+}\Bigg[\nu'(1)\log\left(1+\frac{\mu_i^{*}}{\mu_i^{0}}\right)+\ve \mu_i y_1\left(c_{1,i}\Gamma(y)+|\log\ve|\widehat{\mathcal{A}}_{1,i}(P)\right)
+\ve\mu_i y_2|\log\ve|\widehat{\mathcal{A}}_{2,i}(P)\\
&\hspace{28mm}+\mathcal{O}\left(\ve^2\mu_i^2|\log\ve|^2|y|^2\right)\Bigg]+\mathcal{O}\left(\ve^2\mu_i^2|\log\ve|^2|y|^2\Gamma^{\gamma-3}_{+}\right)\\
&=\mathcal{O}\left(\ve\mu_i|\log\ve|\Gamma^{\gamma-2}_{+}\right).
\end{aligned}
\end{equation}
For the nonlinear quadratic term, similar reasoning yields 
\[
\mathcal N_i(\varphi)\defeq \ve^2\mu_i^2 N_0(\varphi)
=\mathcal O\!\left(\bigl(\Gamma_+^{\gamma-2}(y)+|b_i(y)|\bigr)\varphi^2\right),
\]
while Proposition \ref{errorprop1} further implies 
\begin{equation*}
\widetilde{E}_{i}\defeq \ve^2\mu_i^2S(\Psi_0)=\mathcal{O}\left(\ve\mu_i|\log\ve|\Big(\Gamma\left(\frac{y}{2}\right)\Big)^{\gamma}_{+}\right).
\end{equation*}
Consequently, after a multiplication of \eqref{inner1} by $\ve^2\mu_i^2$, the Inner problem can be recast as
\begin{equation}\label{inprobexp}
\Delta_{y}\phi_{in,i}+\gamma\Gamma^{\gamma-1}_{+}\phi_{in,i}+\bar B_i(y)[\phi_{in,i}]+H_i(\phi_{in},\phi_{out})=0 \quad\mbox{in}\,\,B_{\rho}, \quad i=1,\dots,N,
\end{equation}
where $\rho=\frac{2\delta_1}{\ve\mu_i|\log\ve|^2}$, 
\begin{equation}\label{defbarBinner}
\bar{B}_i(y)[\phi_{in,i}]=\ve^2\mu_i^2 B_0(\ve\mu_i y)[\phi_{in,i}]+b_i(y)\phi_{in,i}(y)
\end{equation}
as in \eqref{oper} and \eqref{defofbi}, and
\begin{equation}\label{defHinner}
H_i(\phi_{in},\phi_{out})=\mathcal{N}_i\Big(\sum_{j=1}^{N}\tilde{\eta}_j\phi_{in,j}+\phi_{out}\Big)+\widetilde{E}_i+\left(\gamma\Gamma^{\gamma-1}_{+}+b_i\right)\phi_{out}.
\end{equation}
\medskip
Regarding the Outer problem in \eqref{outer1}, for notational simplicity we write
\begin{equation}\label{Poissonouter}
L_{x}\left[\phi_{out}\right]+G\left(\phi_{in},\phi_{out}\right)= 0 \quad\mbox{in} \quad\R^2,
\end{equation}
where for $\phi_{in}=(\phi_{in,1},\dots,\phi_{in,N})$ we have introduced 
\begin{equation}\label{Gout}
\begin{aligned}
G\left(\phi_{in},\phi_{out}\right)=&\Big(1-\sum_{i=1}^{N}\tilde{\eta}_i\Big)\bigg[F'\left(\Psi_{0}-\frac{\alpha}{2}|\log\ve||x|^2\right)\phi_{out}+S\left(\Psi_{0}\right)+N_0\Big(\sum_{i=1}^{N}\tilde{\eta}_i\phi_{in,i}+\phi_{out}\Big)\bigg]\\
&+\sum_{i=1}^N\big(L_x[\tilde{\eta}_i\phi_{in,i}]-\tilde{\eta}_iL_x[\phi_{in,i}]\big).
\end{aligned}
\end{equation}
\subsection{Solution of the Outer Problem}\label{subsecouter}
To initiate the resolution of the coupled system formed by \eqref{inprobexp} and \eqref{Poissonouter}, we first deal with the Outer problem \eqref{Poissonouter}. For $\sigma>0$ and $\beta\in(0,1)$, we make the a priori assumptions that 
\begin{equation}\label{b100}
\phi_{out}\in C^{1,\beta}(\R^2), \quad\|(1+|x|^2)^{-1
}\phi_{out}\|_{\infty}\leq 100,
\end{equation}
while $\phi_{in,i}$ satisfies
\begin{equation}\label{propertiesforphi1}
(1+|y|)|D_y \phi_{in,i}(y)|+|\phi_{in,i}(y)| \leq \frac{C\ve\mu_i|\log\ve|}{1+|y|^{\sigma}},\quad i=1,\dots,N,
\end{equation}
with $\mu_i$ as in Proposition \ref{propscaling} and  $y=\frac{A_i^{-1}(x-P_i)}{\ve\mu_i}$.

In this setting, the following lemma holds.
\begin{lemma}\label{lemmaout}
Let $\phi_{out}(x)$ as in  \eqref{b100} and $\phi_{in,i}(y),\,i=1,\dots,N,$ satisfy the estimate \eqref{propertiesforphi1}. Then, for $\delta>0$ sufficiently small and the cut-off functions $\eta_{i}, \,\tilde{\eta}_{i},\,i=1,\dots,N,$ in \eqref{cutoffj} and \eqref{cutoffgluing}, there exists a small $\ve_0 >0$ such that for all $\ve \in (0,\ve_0)$ it holds \[G\left(\phi_{in},\phi_{out}\right)=\sum_{i=1}^{N}\big(L_{x}\left[\tilde{\eta}_i\phi_{in,i}\right]-\tilde{\eta}_iL_{x}\left[\phi_{in,i}\right]\big),\]
where $G(\phi_{in},\phi_{out})$  is defined in \eqref{Gout}.
\end{lemma}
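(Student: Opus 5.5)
The claim is that the bracketed term in \eqref{Gout},
\[
\Big(1-\sum_{i=1}^{N}\tilde\eta_i\Big)\bigg[F'\Big(\Psi_0-\frac{\alpha}{2}|\log\ve||x|^2\Big)\phi_{out}+S(\Psi_0)+N_0\Big(\sum_{i=1}^{N}\tilde\eta_i\phi_{in,i}+\phi_{out}\Big)\bigg],
\]
vanishes identically. Since all three terms inside the bracket are built from $F$, $F'$ and $L_x(\Psi_0)$, it is enough to show two things on the set where $1-\sum_i\tilde\eta_i\neq 0$. First, $F$ vanishes along the whole segment $\Psi_0-\frac{\alpha}{2}|\log\ve||x|^2+t\varphi$, $t\in[0,1]$. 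Second, $L_x(\Psi_0)=0$ there.

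\textbf{Support of $1-\sum_i\tilde\eta_i$.} By \eqref{cutoffgluing} and \eqref{defeta}, $\tilde\eta_i=1$ when $|A_i^{-1}(x-P_i)|\le \frac{\delta_1}{2|\log\ve|^2}$. The balls of this size around distinct $P_i$ are disjoint, because $|P_i-P_j|\gtrsim d/|\log\ve|$. So on the support of $1-\sum_i\tilde\eta_i$ we have $|A_i^{-1}(x-P_i)|\ge \frac{\delta_1}{2|\log\ve|^2}$ for every $i$, that is $|y_i|\ge \frac{\delta_1}{2\ve\mu_i|\log\ve|^2}$, which is much larger than $2$ since $\ve\mu_i=\mathcal O(\ve|\log\ve|^2)$. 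By \eqref{Lxsupport}, $L_x(\Psi_0)=0$ there, so $S(\Psi_0)=F(\Psi_0-\frac{\alpha}{2}|\log\ve||x|^2)$. This is exactly the content of Cases 3, 4 and 5 of Proposition \ref{errorprop1}, where $F$ vanishes.

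\textbf{Strict negativity of the argument.} I need a quantitative version of those cases. In Case 3 (and similarly in Case 4), for $j=i$ the argument of $(\cdot)_+^\gamma$ in $F_i$ equals $\nu'(1)|\log\ve|\mathcal K_{1,i}(y)$. Here $\mathcal K_{1,i}\ge \frac{\log|y|}{2|\log\ve|}$, and on our region $\log|y|\ge \log\frac{\delta_1}{2\ve\mu_i|\log\ve|^2}\ge c|\log\ve|$. Hence the argument is $\le -c'|\log\ve|$, using $\nu'(1)<0$. For $j\neq i$, and in Case 5, the cut-off $\eta_j$ vanishes, and this stays true after a perturbation of size $o(1)$ in $s$, because \eqref{cutoffj} has a transition window of width $\ell_{j,\ve}$, which is of order one.

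\textbf{Smallness of the perturbation.} Next I check that $\varphi=\sum_j\tilde\eta_j\phi_{in,j}+\phi_{out}$ is $o(1)$ on the relevant set, divided by $\kappa_j$. From \eqref{propertiesforphi1}, $|\phi_{in,j}|\le C\ve\mu_j|\log\ve|=o(1)$. From \eqref{b100}, $|\phi_{out}|\le 100(1+|x|^2)$. Where $|x|$ is large no $\eta_j$ is active in any case, since $\Psi_0-\frac{\alpha}{2}|\log\ve||x|^2\to-\infty$ like $-|\log\ve||x|^2$, and this dominates $100(1+|x|^2)$ for small $\ve$. On bounded sets, $\phi_{out}=\mathcal O(1)$.

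\textbf{Main obstacle: the bounded $\phi_{out}$ term.} An $\mathcal O(1)$ perturbation is not $o(1)$, so in the cut-off transition zone this needs care. I would argue by margins rather than smallness. Near the $P_i$, the negativity margin $c'|\log\ve|$ absorbs $100(1+|x|^2)$. In the remaining bounded region, the quantity $s/\kappa_j$ minus the threshold in \eqref{cutoffj} is also of order $-|\log\ve|$ away from the transition annulus $\frac{\delta^2}{|\log\ve|}\le|A_j^{-1}(x-P_j)|\le\frac{\delta}{|\log\ve|}$. Inside that annulus the factor $(\cdot)_+^\gamma$ is already zero with margin of order $|\log\ve|$. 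So $F$, $F'$ and hence $N_0$ all vanish at every point of the segment. This handles even the $\eta_j'$ term in $F'$, because the $(\cdot)_+^\gamma$ factor multiplies it. It gives the identity for all $\ve<\ve_0$.
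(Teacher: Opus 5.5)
Your reduction is sound and is essentially the paper's strategy. On the support of $1-\sum_i\tilde\eta_i$ you show that $F$ vanishes on a neighbourhood of the segment between the unperturbed and perturbed arguments, so that $S(\Psi_0)$, $F'\phi_{out}$ and $N_0$ vanish together; this is a fine substitute for the paper's appeal to the support property \eqref{errorandfsupport}. Your $|\log\ve|$-margin for the $(\cdot)^\gamma_+$ factor of $F_i$ in Cases 3--4 is also the estimate the paper uses on $U_{in,i}$.

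The gap is at the outer edge of the transition annulus of $\eta_j$. You assert that outside $\frac{\delta^2}{|\log\ve|}\le|A_j^{-1}(x-P_j)|\le\frac{\delta}{|\log\ve|}$ the quantity $s/\kappa_j$ minus the threshold in \eqref{cutoffj} is of order $-|\log\ve|$. This is false. The constant $\ell_{j,\ve}$ was chosen precisely so that $\frac1{\kappa_j}(\Psi_0-\frac\alpha2|\log\ve||x|^2)$ reaches the lower threshold of $\eta_j$ (up to $o(1)$) on the circle $|A_j^{-1}(x-P_j)|=\frac{\delta}{|\log\ve|}$. Just outside that circle it lies below the threshold only by roughly $|\nu'(1)|\log\frac{|\log\ve||A_j^{-1}(x-P_j)|}{\delta}$, which is $O(1)$. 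So an $O(1)$ perturbation such as $\phi_{out}/\kappa_j$ switches $\eta_j$ on in a region strictly larger than your annulus, and there you have verified neither margin. The same objection applies to your earlier claim that $\eta_j=0$ survives $o(1)$ perturbations because the window has width $\ell_{j,\ve}$: what matters is the distance below the lower threshold, not the width of the window.

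The missing step is the paper's rescaling of the transition radii. In this zone the argument of $\eta_j$ behaves like $\nu'(1)\log\big(|\log\ve||A_j^{-1}(x-P_j)|\big)$ plus a constant. A bounded shift therefore only multiplies the transition radii by bounded factors. Concretely, there are $\lambda_1,\lambda_2>0$, depending on the $O(1)$ bound of the perturbation and on $\nu'(1)$ but not on $\ve$, such that the perturbed cut-off equals $1$ for $|A_j^{-1}(x-P_j)|\le\frac{\lambda_1\delta^2}{|\log\ve|}$ and $0$ for $|A_j^{-1}(x-P_j)|\ge\frac{\lambda_2\delta}{|\log\ve|}$.

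You must then show that the $(\cdot)^\gamma_+$ factor stays negative, with your $|\log\ve|$ margin, on the whole enlarged annulus $\frac{\delta_1}{2|\log\ve|^2}<|A_j^{-1}(x-P_j)|<\frac{\lambda_2\delta}{|\log\ve|}$. This is the paper's region $U_{in,i}$ and its Cases 1--3. It requires the expansion of Proposition \ref{propscaling} on the larger ball, which is where the smallness of $\delta$ enters: shrink $\delta$ so that $\lambda_2\delta$ still satisfies \eqref{cond1delta}. With this step inserted, your argument closes.

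As a side remark, your large-$|x|$ step (no $\eta_j$ is active because $\Psi_0-\frac\alpha2|\log\ve||x|^2\to-\infty$) tacitly uses $\kappa_j>0$, since for $\kappa_j<0$ one has $s/\kappa_j\to+\infty$. The paper's treatment of $U_{out}$ rests on the same tacit assumption.
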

\begin{proof}
For fixed $i\in\{1,\dots,N\}$, we first consider the inner region \[|A_i^{-1}(x-P_i)|\leq\frac{\delta_1}{2|\log\ve|^2}.\] 
Due to \eqref{cutoffj}, \eqref{cutoffgluing} and the uniform separation of the points in \eqref{unifdistance}, namely \[|P_j-P_i| \geq \frac{d}{|\log\ve|}, \quad \mbox{with} \quad 0<2\delta_1<\delta^2 <\delta < \frac{\sqrt{h^2+r_0^2}}{4h}d,\] 
we have $\tilde{\eta}_i=1$ and $\tilde{\eta}_j=0$ for every $j\neq i$, hence the result follows directly since $1-\sum\limits_{j=1}^{N}\tilde{\eta}_j=1-\tilde{\eta}_i= 0.$

To establish the lemma, it therefore suffices to examine \eqref{Gout} in the region \[|A_i^{-1}(x-P_i)|>\frac{\delta_1}{2|\log\ve|^2}.\]  
We recall that in the definition of the nonlinearity $F$ in \eqref{nonlinearity}, the cut-off $\eta_i$ was introduced in \eqref{cutoffj}-\eqref{rangecutoffi} through $\frac{1}{\kappa_i}\left(\Psi_0-\frac{\alpha}{2}|\log\ve|\,|x|^2\right)=\mathcal{O}(|\log\ve|),$ and the transition occurs on a $\mathcal{O}(1)$ scale. However, the same cut-off $\eta_i$ is now evaluated at the perturbed argument $\frac{1}{\kappa_i}(\Psi_0-\frac{\alpha}{2}|\log\ve||x|^2+\tilde{\eta}_i\phi_{in,i}+\phi_{out})$. By \eqref{b100} and \eqref{propertiesforphi1}, the perturbation satisfies
$\frac{1}{\kappa_i}(\tilde{\eta}_i\phi_{in,i}+\phi_{out})=\mathcal{O}(1)$ in any bounded region, whereas on unbounded sets, the term $-\frac{\alpha}{2}|\log\ve||x|^2<0$ dominates the quadratic growth $\mathcal{O}(1+|x|^2)$ of the global correction $H_{2\ve}(x)$ appearing in $\Psi_0$ and the outer perturbation $\phi_{out}(x)$ (see \eqref{globalapprox} and \eqref{boundh2e}) for all small $\ve>0.$ Consequently, the transition region of $\eta_i$ differs from that in \eqref{cutoffj} only up to multiplicative constants, i.e. there exist universal constants $\lambda_1,\lambda_2>0$ (independent of $\ve>0$) such that
\begin{equation*}
\eta_i\left(
\frac{1}{\kappa_i}\left(
\Psi_0-\frac{\alpha}{2}|\log \ve|\,|x|^2+\tilde{\eta}_i\phi_{in,i}+\phi_{out}
\right)\right)
=
\begin{cases}
1, & \text{if } \left|A_i^{-1}(x-P_i)\right| \le \dfrac{\lambda_1\delta^2}{|\log \ve|},\\[1ex]
0, & \text{if } \left|A_i^{-1}(x-P_i)\right| \ge \dfrac{\lambda_2\delta}{|\log \ve|}.
\end{cases}
\end{equation*}
In this respect, for the analysis of \eqref{Gout} we focus on the regions
\begin{equation}\label{Hregions}
\begin{aligned}
U_{in,i}\defeq\left\{\frac{\delta_1}{2|\log\ve|^2} <|A_{i}^{-1}(x-P_i)|<\frac{\lambda_2\delta}{|\log\ve|}\right\},\quad
U_{out}\defeq \bigcap\limits_{j=1}^{N}\left\{|A_j^{-1}(x-P_j)|\geq\frac{\lambda_2\delta}{|\log\ve|}\right\},
\end{aligned}
\end{equation}
keeping in mind that for $x\in U_{in,i},$ one has $\eta_j=\tilde{\eta}_j=0$ for every $j\neq i$. 

Let us first consider $x\in U_{in,i}.$ By Proposition \ref{errorprop1} and \eqref{nol}, we find \[\left(1-\tilde{\eta}_i\right)\Big(F'\big(\Psi_0-\frac{\alpha}{2}|\log\ve||x|^2\big)\phi_{out}+S(\Psi_0)\Big)=0,\]
since 
\begin{equation}\label{errorandfsupport}
\operatorname{supp} F'\left(\Psi_0-\frac{\alpha}{2}|\log\ve||x|^2\right)\,\cup\, \operatorname{supp} S(\Psi_0) \subset \bigcup\limits_{j=1}^{N}\left\{x\in\R^2: |A_j^{-1}(x-P_j)| < 2\ve\mu_j\right\}.
\end{equation}
Therefore, the only remaining term to be controlled in the region $U_{in,i}$ is 
\begin{equation*}
\begin{aligned}
&(1-\tilde{\eta}_i)N_0(\varphi)=(1-\tilde{\eta}_i)F\Big(\Psi_0-\frac{\alpha}{2}|\log\ve||x|^2+\tilde{\eta}_i\phi_{in,i}+\phi_{out}\Big)\\
&=\frac{(1-\tilde{\eta}_i)\kappa_i}{\ve^2\mu_i^2}\left(\frac{1}{\kappa_i}\left(\Psi_0-\frac{\alpha}{2}|\log\ve||x|^2+\tilde{\eta}_i\phi_{in,i}+\phi_{out}\right)+\nu'(1)|\log\ve|+\frac{\alpha}{2\kappa_i}|\log\ve||P_i|^2\right)^{\gamma}_{+}\\
&\hspace{6mm}\times\eta_i\left(\frac{1}{\kappa_i}\left(\Psi_0-\frac{\alpha}{2}|\log\ve||x|^2+\tilde{\eta}_i\phi_{in,i}+\phi_{out}\right)\right).
\end{aligned}
\end{equation*} 
For readability, we set
\[\mathcal{H}_i(x)\defeq(1-\tilde{\eta}_i)F\Big(\Psi_0-\frac{\alpha}{2}|\log\ve||x|^2+\tilde{\eta}_i\phi_{in,i}+\phi_{out}\Big),\]
and establish that $\mathcal{H}_i(x)\equiv 0$  for all small $\ve>0$ due to \eqref{b100} and \eqref{propertiesforphi1}. We split the analysis into three annular regions around $P_i$ using the variable 
 \[z=A_{i}^{-1}(x-P_i), \quad z=\ve\mu_i y,\]
 corresponding to the transition zones of the cut-offs $\eta_i$ and $\tilde{\eta}_i$.
\\
\text{\bf Case 1}:  $\frac{\delta_1}{2\ve\mu_i|\log\ve|^2}< |y| \leq \frac{\delta_1}{\ve\mu_i|\log\ve|^2}$.
\\
In this region, $\tilde{\eta}_i\in(0,1)$ and  $\eta_{i}=1$, thus Proposition \ref{errorprop1} and the definitions of $\widehat{\mathcal{A}}_{1,i}(P)$ and $\widehat{\mathcal{A}}_{2,i}(P)$ in \eqref{mode1func}-\eqref{logmode1fun} allow us to write 
\begin{equation*}
\begin{aligned}
\mathcal{H}_i(x)
&=\frac{\left(1-\tilde{\eta}_i\right)\kappa_i}{\ve^2\mu_i^2}\Bigg(\Gamma(y)(1+c_{1,i}\ve\mu_iy_1+c_{2,i}\ve^2\mu_i^2|y|^2)+\nu'(1)\log\left(1+\frac{\mu_i^{*}}{\mu_i^{0}}\right)+\ve\mu_iy_1|\log\ve|\widehat{\mathcal{A}}_{1,i}(P)\\
&\hspace{4mm}+\ve\mu_i y_2 |\log\ve| \widehat{\mathcal{A}}_{2,i}(P)
+\mathcal{O}\left(\ve^2\mu_i^2|\log\ve|^2|y|^2\right)+\frac{1}{\kappa_i}\big(\tilde{\eta}_i\phi_{in,i}(y)+\phi_{out}\left(P_i+A_i\ve\mu_i y\right)\big)\Bigg)^{\gamma}_{+}\\
&=\frac{\left(1-\tilde{\eta}_i\right)\kappa_i}{\ve^2\mu_i^2}\Big(\nu'(1)|\log\ve|\mathcal{E}_{1,i}(y)\Big)^{\gamma}_{+},
\end{aligned}
\end{equation*}
with
\begin{equation*}
\begin{aligned}
\mathcal{E}_{1,i}(y)&=\frac{\log|y|}{|\log\ve|}\left(1+c_{1,i}\ve\mu_iy_1+c_{2,i}\ve^2\mu_i^2|y|^2\right)+\frac{\log\left(1+\frac{\mu_i^{*}}{\mu_i^0}\right)}{|\log\ve|}+\frac{\ve\mu_i}{\nu'(1)}\Big(y_1\widehat{\mathcal{A}}_{1,i}(P)+y_2\widehat{\mathcal{A}}_{2,i}(P)\Big)\\
&\hspace{4mm}+\mathcal{O}\left(\ve^2\mu_i^2|\log\ve||y|^2\right)+\frac{1}{\kappa_i\nu'(1)|\log\ve|}\Big(\tilde{\eta}_{i}\phi_{in,i}(y)+\phi_{out}\left(P_i+A_i\ve\mu_i y\right)\Big).
\end{aligned}
\end{equation*}
Using \eqref{b100} and \eqref{propertiesforphi1} together with the expansion $\frac{\log|y|}{|\log\ve|}= 1+\mathcal{O}\left(\frac{\log|\log\ve|}{|\log\ve|}\right),$ we obtain \[\mathcal{E}_{1,i}(y)=\left(1+\mathcal{O}\left(\frac{\log|\log\ve|}{|\log\ve|}\right)\right)\left(1+\mathcal{O}\left(\delta_1|\log\ve|^{-2}\right)\right)+\mathcal{O}\left(|\log\ve|^{-1}\right),\] while recalling that $\nu'(1)|\log\ve|<0$, we may take  $\ve_0>0$ sufficiently small to ensure $\mathcal{E}_{1,i}(y)>0$, so that $\mathcal{H}_i(x)=0$ for all $\ve\in(0,\ve_0)$. 
\\
\text{\bf Case 2}: $\frac{\delta_1}{\ve\mu_i|\log\ve|^2} < |y| \leq \frac{\lambda_1\delta^2}{\ve\mu_i|\log\ve|}$. 
\\
In this region, $\tilde{\eta}_i=0$ and $\eta_i=1$, hence we get 
\begin{equation*}
\begin{aligned}
\mathcal{H}_i(x)&=\frac{\kappa_i}{\ve^2\mu_i^2}\Bigg(\Gamma(y)(1+c_{1,i}\ve\mu_i y_1 +c_{2,i}\ve^2\mu_i^2|y|^2)+\nu'(1)\log\left(1+\frac{\mu_i^{*}}{\mu_i^{0}}\right)+\ve\mu_iy_1|\log\ve|\widehat{\mathcal{A}}_{i,1}(P)\\
&\hspace{15mm}+\ve\mu_i y_2 |\log\ve| \widehat{\mathcal{A}}_{i,2}(P)+\mathcal{O}\left(\ve^2\mu_i^2|\log\ve|^2|y|^2\right)+\frac{1}{\kappa_i}\phi_{out}\left(P_i+A_i\ve\mu_i y\right)\Bigg)^{\gamma}_{+}
\end{aligned}
\end{equation*}
 Analogously to the previous case, for all sufficiently small $\ve>0$ it holds $\mathcal{H}_i(x)=0$,
 since \[\mathcal{H}_i(x)=\frac{\kappa_i}{\ve^2\mu_i^2}\left(\nu'(1)|\log\ve|\mathcal{E}_{2,i}(y)\right)^{\gamma}_{+},\]
 with
 \[\mathcal{E}_{2,i}(y)=
 \left(1+\mathcal{O}\Big(\frac{\log|\log\ve|}{|\log\ve|}\Big)\right)\left(1+\mathcal{O}\left(\delta^2|\log\ve|^{-1}\right)\right) +\mathcal{O}\left(|\log\ve|^{-1}\right).\]
\\
\text{\bf Case 3:} $\frac{\lambda_1\delta^2}{\ve\mu_i|\log\ve|} <|y| < \frac{\lambda_2\delta}{\ve\mu_i|\log\ve|}.$
\\
Here $\eta_{i}\in(0,1)$ and $\tilde{\eta}_i=0$, and a similar argument as in the previous two cases establishes $\mathcal{H}_i(x)=0$ for all $\ve>0$ sufficiently small.

We now turn our attention to the outer region $U_{out}$ in \eqref{Hregions}. By definition, $\tilde{\eta}_i= 0$ and $\eta_i=0$ for all $i\in\{1,\dots,N\}$, hence using the support property \eqref{errorandfsupport} we directly have
\begin{equation*}
\bigg(1-\sum_{i=1}^{N}\tilde{\eta}_i\bigg)\left[F'\left(\Psi_{0}-\frac{\alpha}{2}|\log\ve||x|^2\right)\phi_{out}+S\left(\Psi_{0}\right)+N_0\Big(\sum_{i=1}^{N}\tilde{\eta}_i\phi_{in,i}+\phi_{out}\Big)\right]=0.
\end{equation*}
\end{proof}
\begin{corollary}
A key outcome of Lemma \ref{lemmaout} is that if $\phi_{out}$ and $\phi_{in,i},\,i=1,\dots,N,$ satisfy \eqref{b100} and \eqref{propertiesforphi1}, then the Outer problem \eqref{Poissonouter} reduces to 
\begin{equation}\label{poisreduced}
L_{x}[\phi_{out}]=-\sum_{i=1}^{N}\big(L_{x}\left[\tilde{\eta}_i\phi_{in,i}\right]-\tilde{\eta}_i L_{x}\left[\phi_{in,i}\right]\big) \quad \textnormal{in} \quad \R^2.
\end{equation}
\end{corollary}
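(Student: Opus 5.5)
The corollary is an immediate consequence of Lemma \ref{lemmaout}, so the plan is simply to substitute the lemma's identity into the Outer problem \eqref{Poissonouter}. First, I would check that the hypotheses match. By assumption, $\phi_{out}$ satisfies \eqref{b100} and each $\phi_{in,i}$ satisfies \eqref{propertiesforphi1}. These are exactly the hypotheses of Lemma \ref{lemmaout}. Hence, for $\delta>0$ small and $\ve\in(0,\ve_0)$, the lemma gives the pointwise identity in $\R^2$
\[G(\phi_{in},\phi_{out})=\sum_{i=1}^{N}\big(L_x[\tilde\eta_i\phi_{in,i}]-\tilde\eta_iL_x[\phi_{in,i}]\big).\]
Substituting this into \eqref{Poissonouter}, $L_x[\phi_{out}]+G(\phi_{in},\phi_{out})=0$, and moving the sum to the right-hand side yields \eqref{poisreduced}.

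The only subtle point is the sense in which the identity and the resulting equation hold. Since $\phi_{out}\in C^{1,\beta}$ while $L_x$ is second order, I would read \eqref{poisreduced} in the distributional, or weak $W^{2,p}_{loc}$, sense, exactly as \eqref{Poissonouter} is understood.

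It is also worth noting why the commutator term is harmless. The term $L_x[\tilde\eta_i\phi_{in,i}]-\tilde\eta_iL_x[\phi_{in,i}]$ equals $2K\nabla\tilde\eta_i\cdot\nabla\phi_{in,i}+\phi_{in,i}L_x\tilde\eta_i$. This involves only first derivatives of $\phi_{in,i}$, so it is well defined under \eqref{propertiesforphi1}. It is supported in the annuli $\frac{\delta_1}{2|\log\ve|^2}\le|A_i^{-1}(x-P_i)|\le\frac{\delta_1}{|\log\ve|^2}$, where $\tilde\eta_i$ transitions. Consequently the right-hand side of \eqref{poisreduced} is a compactly supported bounded function, which is what the later linear theory (Proposition \ref{propouter}) requires.

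I expect no real obstacle here. All the work lies in Lemma \ref{lemmaout}, namely showing that the cut-off nonlinearity vanishes outside the inner cores. The corollary only records that, under the a priori bounds, the Outer problem decouples from $F$ and becomes a linear Poisson-type equation driven solely by the inner corrections.
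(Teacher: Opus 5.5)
Your proposal is correct and takes the same approach as the paper: the corollary follows by substituting the identity of Lemma \ref{lemmaout} directly into \eqref{Poissonouter}. Your extra remarks also agree with how the paper then invokes Proposition \ref{propouter}, namely that the commutator $2K\nabla\tilde\eta_i\cdot\nabla\phi_{in,i}+\phi_{in,i}L_x\tilde\eta_i$ is bounded and supported in the transition annuli of $\tilde\eta_i$.
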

As a consequence, provided that the right-hand side of \eqref{poisreduced} decays sufficiently fast at infinity, the problem falls within the class of equations treated in Proposition \ref{propouter}, and therefore the existence of a corresponding solution $\phi_{out}(x)$ follows directly.

To verify the decay property, in the variable \[z_i=A_i^{-1}(x-P_i), \, z_i=\ve\mu_i y_i,\]
a direct substitution in \eqref{oper} gives 
\begin{equation}\label{outeroperatorphiin}
\begin{aligned}
&\sum_{i=1}^{N}\big(L_{x}\left[\tilde{\eta}_i\phi_{in,i}\right]-\tilde{\eta}_i L_{x}\left[\phi_{in,i}\right]\big)\\
&=\sum_{i=1}^{N}\Bigg[2\nabla_{z_i}\tilde{\eta}_{i}\left(\frac{|\log\ve|^2|z_i|}{\delta_1}\right)\cdot\nabla_{z_i}\phi_{in,i}(y_i)+\phi_{in,i}(y_i)\Delta_{z_i}\tilde{\eta}_i\left(\frac{|\log\ve|^2|z_i|}{\delta_1}\right)\\
&\hspace{4mm}+B_0(z_i)\left[\tilde{\eta}_i\left(\frac{|\log\ve|^2|z_i|}{\delta_1}\right)\phi_{in,i}(y_i)\right]-\tilde{\eta}_i\left(\frac{|\log\ve|^2|z_i|}{\delta_1}\right)B_0(z_i)\left[\phi_{in,i}(y_i)\right]\Bigg].        
\end{aligned}
\end{equation}
For each $i\in\{1,\dots,N\},$ one can deduce that the function $L_{x}[\tilde{\eta}_i\phi_{in,i}]-\tilde{\eta}_iL_{x}[\phi_{in,i}]$
is bounded and compactly supported in the transition region of the cut-off $\tilde{\eta}_i$, thus there exist $C>0$ and $\bar\nu >2$ (independent of $i$) such that \[\big|L_{x}[\tilde{\eta}_i\phi_{in,i}]-\tilde{\eta}_iL_{x}[\phi_{in,i}]\big| \leq \frac{C}{1+|x|^{\bar\nu}},\quad \forall x\in\R^2.\] 
In particular, \[\sup\limits_{x\in\R^2}\big|L_{x}\left[\tilde{\eta}_i\phi_{in,i}\right]-\tilde{\eta}_iL_{x}\left[\phi_{in,i}\right]\big|\leq C\ve^{1+\sigma^{*}}\]
uniformly in $i=1,\dots,N$, for some $0<\sigma^{*}<\sigma$, with $\sigma>0$ as in \eqref{propertiesforphi1}. 

To illustrate the estimate above, using \eqref{propertiesforphi1}, \eqref{cutoffgluing} and the variable $z=A_i^{-1}(x-P_i),\,z=\ve\mu_i y$, we have
\begin{equation}\nonumber
\begin{aligned}
\Bigg|\nabla_{z}\tilde{\eta}_i\left(\frac{|\log\ve|^2|z|}{\delta_1}\right)\cdot\nabla_{z}\phi_{in,i}(y)\Bigg|&\leq \frac{C|\log\ve|^2
}{\ve\mu_i}\tilde{\eta}_i^{'}\left(\frac{|\log\ve|^2|z
|}{\delta_1}\right)\nabla_{y}\phi_{in,i}(y)\\
&\leq o(1) \frac{\ve ^{1+\sigma^{*}}}{1+|x|^{\bar{\nu}}},
\end{aligned}
\end{equation}
with $o(1)\to 0$ as $\ve \to 0,\, \bar{\nu}>2$ and some $0<\sigma^{*}<\sigma$, while the remaining terms in \eqref{outeroperatorphiin} can be handled in a similar manner.

As a result, we apply Proposition \ref{propouter} to get the existence of a solution $\phi_{out}\in C^{1,\beta}(\R^2)$ for any $\beta\in(0,1)$ to equation \eqref{poisreduced}, which satisfies 
\begin{equation}\label{boundphiout}
|\phi_{out}(x)|\leq C \ve^{1+\sigma^{*}}\left(1+|x|^2\right),\quad \forall x \in \R^2.
\end{equation}
Finally, since $\phi_{out}(x)$ is unique up to a constant, we choose the one with the property
\begin{equation}\label{phioutP}
\phi_{out}(r_0,0)=0.
\end{equation}
\section{Inner and Outer Linear Theories}\label{Section 7}
This section collects the linear solvability results and a priori estimates for the Inner and Outer problems introduced in Section \ref{Section 6}. These constitute the main analytic tools for solving the coupled system in \eqref{inprobexp} and \eqref{Poissonouter}. Although the linear theory for the Inner problem was established in \cite{MR5036920}, we include a brief sketch of the proofs in the Appendix for the reader's convenience.
\subsection{Linear Inner Theory}
Let $h(y)$ be a function satisfying the decay condition 
\begin{equation}\label{hdecay}
    |h(y)|\leq \frac{C}{1+|y|^{2+\sigma}},
\end{equation} for some $\sigma>0$. The current section considers the linear equation
\be\label{Linearisedinner}
\begin{aligned}
\Delta_{y} \phi + \gamma\Gamma^{\gamma-1}_{+}\phi +  h(y)= 0 \inn \, \R^2, 
\end{aligned}
\ee
where $\Gamma$ is given in \eqref{defGamma}.

For $\sigma>0$ as in \eqref{hdecay} and $\beta\in(0,1)$, we employ the weighted norms
\be\label{norma} \begin{aligned}
\|h\|_{2+\sigma}  = & \sup_{y\in \R^2} (1+|y|)^{2+\sigma}|h(y)|, \\
\|h\|_{2+\sigma,\beta}  = &\|h\|_{2+\sigma}  +   \sup_{y\in \R^2}(1+|y|)^{2+\sigma + \beta}[h]_{B_1(y),\beta} ,
\end{aligned}
\ee
where for any $A\subset \R^2,$ we use the standard H\"{o}lder seminorm notation
$$
[h]_{A,\beta} = \sup_{\ell_1\neq \ell_2, \ell_1,\ell_2\in A}  \frac {|h(\ell_1) -h(\ell_2)| } {|\ell_1-\ell_2|^\beta}.
$$
In \cite{MR2456896}, the authors established that the linear operator $\Delta_y +\gamma\Gamma^{\gamma-1}_{+}$ in \eqref{Linearisedinner} is nondegenerate in $L^{\infty}(\R^2)$, in the sense that its kernel is generated by the bounded translation modes $Z_i(y)=\partial_{y_i}\Gamma(y), \ i=1,2$. On the other hand, if one enlarges the class of admissible solutions of \eqref{Linearisedinner} to allow logarithmic growth at infinity, then the kernel becomes three-dimensional, and it is spanned by
\begin{equation}\label{elemkernel}
Z_0(y)=\frac{2}{\gamma-1}\Gamma(y)+y\cdot\nabla_y \Gamma(y), \quad  Z_i(y)  =  \partial_{y_i}  \Gamma(y), \  i=1,2.
\end{equation}
Further details on the radial scaling mode $Z_0$ can be found in \cite{MR5036920}*{Section 5.2}. Here, we note that $Z_0(y)=\mathcal{O}\left(\log\left(2+|y|\right)\right)$ and $Z_i(y)=\mathcal{O}\left(\frac{1}{1+|y|}\right),\,i=1,2$. 

The following lemma holds.
\begin{lemma}\label{lemat}
Given $\sigma>0$ and $\beta\in (0,1)$, consider the norms defined in \eqref{norma} and the functions $Z_i,\,i=0,1,2,$ given by \eqref{elemkernel}. For each $h$ with $\|h\|_{2+\sigma} <+\infty$,
there exist a constant $C>0$ and a solution $\phi =  \mathcal{T} [ h]$ of equation \eqref{Linearisedinner}, that defines a linear operator of $h$ and satisfies
\begin{equation*}
\begin{aligned}
&(1+|y|) |\nabla \phi (y)|  + | \phi (y)| \\
&\le  \,  C \left [ \,  \log \left(2+|y|\right) \left|\int_{\R^2} h Z_0\right|  +    (1+|y|) \sum_{i=1}^2 \left|\int_{\R^2} h Z_i\right|+  (1+|y|)^{-\sigma} \|h\|_{2+\sigma}   \,\right].
\end{aligned}
\end{equation*}
Moreover, if 
$\|h\|_{2+\sigma,\beta} <+\infty$, it further holds
\begin{equation*}
\begin{aligned}
& (1+|y|^{2+\beta})  [D^2_y \phi]_{B_1(y),\beta}  +\left(1+|y|^2\right)  \left|D^2_y \phi (y)\right| \\  &
\,  \le  \,   C \left [ \,  \log \left(2+|y|\right) \left|\int_{\R^2} h Z_0\right|  +    (1+|y|) \sum_{i=1}^2 \left|\int_{\R^2} h Z_i\right|  +  (1+|y|)^{-\sigma} \|h\|_{2+\sigma,\beta}   \,\right]. 
\end{aligned}
\end{equation*}
\end{lemma}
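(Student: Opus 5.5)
The plan is to decompose $h$ into Fourier modes in the angular variable and solve mode by mode with ODE methods. Write $y=re^{i\theta}$ and
\[h(y)=h_0(r)+\sum_{k\ge1}\big(h_k(r)\cos k\theta+\tilde h_k(r)\sin k\theta\big),\]
and seek $\phi$ of the same form. Each mode $\phi_k$ must solve the radial ODE
\[\phi_k''+\tfrac1r\phi_k'-\tfrac{k^2}{r^2}\phi_k+\gamma\nu(r)^{\gamma-1}\mathbf 1_{r<1}\phi_k+h_k=0 .\]
Since the potential $\gamma\Gamma_+^{\gamma-1}$ is supported in $B_1$, each modal equation is an explicit Euler equation for $r>1$. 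The homogeneous solutions are then known exactly on $r>1$ ($1,\log r$ for $k=0$; $r^{\pm k}$ for $k\ge1$), and on $r<1$ they are given by the regular solution of the linearised ODE.

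For $k=0$ we use the kernel element $Z_0$ from \eqref{elemkernel}, which is regular at the origin and grows like $\log r$. A second solution is obtained by reduction of order. Variation of parameters then gives the solution regular at $0$:
\[\phi_0(r)=Z_0(r)\int_r^\infty\!\cdots\;-\;\cdots,\]
normalised so that, for large $r$, it equals $c\,\log r\int_{\mathbb R^2}hZ_0$ plus a term bounded by $r^{-\sigma}\|h\|_{2+\sigma}$ (this is the Wronskian formula). The decay $|h|\lesssim(1+r)^{-2-\sigma}$ makes all the integrals converge. The tail is estimated by $\int_r^\infty s\,|h_0|\,\log s\,ds\lesssim r^{-\sigma}\log r$; this is sharpened to $r^{-\sigma}$ after absorbing a slightly smaller $\sigma$, or by exact integration in the Euler region.

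For $k=1$ the regular solution is $Z_i\sim r^{-1}$ at infinity, since $\partial_r\Gamma$ is regular and decays. The same variation of parameters produces a solution that is $O(r)$ times the projection $\int hZ_i$, plus $O(r^{-\sigma})$.

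For $k\ge2$ there is no kernel, by the nondegeneracy result of \cite{MR2456896}. The regular solution grows like $r^k$ at infinity, and the decaying one $r^{-k}$ has a nonzero Wronskian with it. Using the Green's function we get $|\phi_k|\lesssim k^{-2}r^{-\sigma}\|h\|_{2+\sigma}$, provided $\sigma<2$, which we may assume.

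The main obstacle is summability over $k$ with uniform constants. For large $k$ the term $k^2/r^2$ dominates the bounded potential $\gamma\nu^{\gamma-1}$, so a maximum-principle comparison with barriers $C k^{-2}(1+r)^{-\sigma}$ works for $k\ge k_0$. Only finitely many modes then need the ODE analysis above. Alternatively, one can project $h$ onto the first modes, treat the remainder $h^\perp$ by a global barrier argument for $\Delta+\gamma\Gamma_+^{\gamma-1}$ restricted to high modes, and obtain linearity of $\mathcal T$ from the explicit formulas.

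The gradient bound follows by scaling. On annuli $B_{r/2}(y)$ with $r=|y|$ large, rescale to unit size and apply interior elliptic estimates to $\Delta\phi=-h-\gamma\Gamma_+^{\gamma-1}\phi$; near the origin use local $W^{2,p}$ estimates. The $C^{2,\beta}$ bound with weight $(1+|y|)^{2+\beta}$ follows in the same way from Schauder estimates on rescaled balls. Near $|y|=1$ one uses that $\Gamma_+^{\gamma-1}$ is $C^{0,1}$ because $\gamma>3$, which gives the required Hölder regularity of the potential term.
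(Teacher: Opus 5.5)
Your proposal is correct and follows essentially the paper's route. Both arguments use a Fourier decomposition in $\theta$, then reduction of order or variation of parameters with $Z_0$ and $\partial_r\Gamma$ for the modes $k=0,1$ (which is where $\int hZ_0$ and $\int hZ_i$ enter), then a comparison argument giving $|\phi_k|\lesssim k^{-2}(1+r)^{-\sigma}\|h\|_{2+\sigma}$ for $|k|\ge 2$, and finally rescaled interior elliptic and Schauder estimates for the derivatives. The only difference is in the higher modes: the paper uses a single supersolution for all $|k|\ge 2$ built from the positive mode-two solution $z_2$, whereas you handle finitely many modes by nondegeneracy and Green's functions and the rest by the elementary barrier $Ck^{-2}(1+r)^{-\sigma}$, which is valid once $k^2/r^2$ dominates the potential $\gamma\Gamma_+^{\gamma-1}$.
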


\subsection{Projected Linearised Inner Problem}
In this section, we fix $\delta_{1}>0$ sufficiently small, take scaling parameters $\mu_i,\,i=1,\dots,N$ as in Proposition \ref{propscaling}, and define 
\[\rho \defeq \frac{2\delta_1}{\ve\mu_i|\log\ve|^2}.\]
For certain scalars $d_{ij},\,j=0,1,2,$ and a fixed $i\in\{1,\dots,N\},$ we  consider the projected linearised problem
\begin{equation}\label{problemlinearprojected}
    \Delta_{y}\phi_{i}+\gamma\Gamma^{\gamma-1}_{+}\phi_{i}+\bar{B}_i[\phi_{i}]+h(y)=\sum_{j=0}^{2}d_{ij}\gamma\Gamma^{\gamma-1}_{+}Z_j\quad\mbox{in}\quad B_{\rho}.
\end{equation}
For convenience, we recall that the definition of the functions $Z_j, j=0,1,2$ can be found in \eqref{elemkernel}, while 
 \begin{equation}\label{defofBinner}
 \bar{B}_i[\phi_{i}]=b_i(y)\phi_{i}+\widetilde{B}_0[\phi_{i}],
 \end{equation}
 with $b_i(y)=\mathcal{O}\left(\ve\mu_i|\log\ve|\Gamma^{\gamma-2}_{+}\right)$ and $\widetilde{B}_{0}=\ve^2\mu_i^2B_{0}(\ve\mu_i y)$; see \eqref{oper} and \eqref{defofbi}. 

Moreover, for $\sigma>0$,\, $\beta\in(0,1)$, and any open set $A \subset \R^2$, we modify the norms in \eqref{norma} taking all suprema over $A$, namely
\begin{equation}\label{norm1}
\|h\|_{2+\sigma,A}=\sup_{y\in A}\left(1+|y|\right)^{2+\sigma}|h(y)|, \quad \|h\|_{2+\sigma,\beta,A}=\sup_{y\in A}\left(1+|y|\right)^{2+\sigma+\beta}[h]_{B_{1}(y)\cap A,\beta}+\|h\|_{2+\sigma,A},\end{equation}
while for $\phi_{i}\in C^{2,\beta}(A)$ we define the norm
 \begin{equation}\label{starnorm}
 \|\phi_{i}\|_{*,\sigma,A} =  \|  D^2_y\phi_{i} \|_{2+\sigma,\beta,A}   + \|  D_y\phi_{i} \|_{1+\sigma,A}+ \|\phi_{i} \|_{\sigma,A}.
 \end{equation}
With a slight abuse of notation, we will suppress the subscript in \eqref{norm1} and \eqref{starnorm} when $A=\R^2$. 

We establish the following result.
\begin{prop}\label{prop1} Let $\sigma>0$,\,$\beta \in (0,1)$, and consider the norms defined in \eqref{norm1} and \eqref{starnorm}. Then, for $\ve>0$ sufficiently small and for every $h$ satisfying $\|h\|_{2+\sigma,\beta,B_{\rho}} < +\infty$, there exists a constant $C>0$ such that for any differential operator $\bar{B}_i$ of the form \eqref{defofBinner} and $\rho=\frac{2\delta_1}{\ve\mu_i|\log\ve|^2}$, the projected
problem \eqref{problemlinearprojected} has a solution $\phi_{i} = T_i[h]$ for certain scalars $d_{ij}[h], \,j=0,1,2$, which defines a linear operator of $h$ and satisfies
\begin{align*} 
\| \phi_{i}\|_{*,\sigma, B_\rho} \ \le\ C \|h \|_{2+\sigma,\beta, B_\rho} .
\end{align*}
Moreover, the coefficients $d_{ij}$ admit the expansions
\begin{align*}
d_{i0}[h]\, = & \, \gamma_0\int_{B_{\rho}}  h Z_0  + \mathcal{O}\left(\frac{\log(2+\rho)}{1+\rho^{\sigma}}\right)\|h \|_{2+\sigma,\beta, B_\rho} , \\    d_{ij}[h]\, = &\, \gamma_j\int_{B_{\rho}}  h Z_j  + \mathcal{O}\left(\ve\mu_i|\log\ve|\right)  \|h \|_{2+\sigma,\beta, B_\rho}, \ j=1,2,
\end{align*}
where
$\gamma_j^{-1} = \int_{\R^2} \gamma\Gamma^{\gamma-1}_{+} Z_j^2,\,j=0,1,2.$ \end{prop}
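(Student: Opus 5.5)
The plan is a perturbation argument around the whole-space linear theory of Lemma \ref{lemat}, combined with an extension of $h$ from $B_\rho$ to $\R^2$ and a fixed point in the weighted norm \eqref{starnorm}.

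\textbf{Extension and the model projected problem.} We extend $h$ to $\R^2$ as $\tilde h=\chi h$, where $\chi$ is a cut-off of $B_\rho$. This keeps $\|\tilde h\|_{2+\sigma,\beta}\le C\|h\|_{2+\sigma,\beta,B_\rho}$. We then solve
\[
\Delta\phi+\gamma\Gamma_+^{\gamma-1}\phi+\tilde h=\sum_{j=0}^2 d_j\gamma\Gamma_+^{\gamma-1}Z_j
\]
in $\R^2$, with
\[
d_j=\gamma_j\int_{\R^2}\tilde h Z_j .
\]
The choice of $\gamma_j$ makes $\tilde h-\sum_j d_j\gamma\Gamma_+^{\gamma-1}Z_j$ orthogonal to $Z_0,Z_1,Z_2$. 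Here we use that $\Gamma_+^{\gamma-1}$ is supported in $B_1$, so the products $\gamma\Gamma_+^{\gamma-1}Z_iZ_j$ are compactly supported and, by parity and radiality, the Gram matrix is diagonal. For $Z_0$ we need $\int\gamma\Gamma_+^{\gamma-1}Z_0^2\neq0$; this is where the constant $\gamma_0$ comes from, and it will be checked using the explicit form of $\nu$.

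With the orthogonality in hand, Lemma \ref{lemat} gives a linear solution $\phi=\mathcal T[\cdot]$ satisfying
\[
(1+|y|)^{\sigma}\bigl(|\phi|+(1+|y|)|\nabla\phi|\bigr)+\text{H\"older bounds on }D^2\phi\le C\|h\|_{2+\sigma,\beta,B_\rho}.
\]
This is exactly the bound $\|\phi\|_{*,\sigma}\le C\|h\|_{2+\sigma,\beta,B_\rho}$, because the logarithmic and linear growth terms in Lemma \ref{lemat} are killed by orthogonality. The difference between $\int_{\R^2}\tilde hZ_0$ and $\int_{B_\rho}hZ_0$ is at most $\int_{|y|>\rho}|h|\log|y|\lesssim\rho^{-\sigma}\log\rho\,\|h\|$, which accounts for the stated error in $d_{i0}$.

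\textbf{Perturbation by $\bar B_i$.} Denote this solution operator by $\phi=\mathcal T_0[h]$, with coefficients $d_j^0[h]$. We look for $\phi_i$ solving
\[
\phi_i=\mathcal T_0\bigl[h+\bar B_i[\phi_i]\bigr].
\]
Everything rests on the smallness of $\bar B_i$ in the norms, namely
\[
\|\chi\bar B_i[\phi]\|_{2+\sigma,\beta,B_\rho}\le o(1)\|\phi\|_{*,\sigma,B_\rho}.
\]
The term $b_i\phi$ is supported in $B_1$ and is $\mathcal O(\ve\mu_i|\log\ve|)\|\phi\|_*$, which is fine.

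For $\widetilde B_0=\ve^2\mu_i^2B_0(\ve\mu_iy)$, the expansion after \eqref{oper} gives second-order coefficients of size $\mathcal O(\ve\mu_i|y|)$ and first-order coefficients of size $\mathcal O(\ve\mu_i)$, for $|y|<\rho$. Since $|D^2\phi|\lesssim|y|^{-2-\sigma}\|\phi\|_*$, we get
\[
|\widetilde B_0\phi|\lesssim \ve\mu_i|y|^{-1-\sigma}\|\phi\|_*\le \ve\mu_i\rho\,|y|^{-2-\sigma}\|\phi\|_*,
\]
and $\ve\mu_i\rho=2\delta_1|\log\ve|^{-2}\to0$. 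The first-order terms are handled similarly, and the Hölder seminorms follow in the same way.

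Hence the map is a contraction in $\|\cdot\|_{*,\sigma,B_\rho}$, and it is linear in $h$. We obtain $\|\phi_i\|_{*}\le C\|h\|$, with $d_{ij}=d^0_j[h+\bar B_i\phi_i]$.

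\textbf{The main obstacle: expansions of the $d_{ij}$.} For $j=1,2$, using $|Z_j|\lesssim(1+|y|)^{-1}$, we need
\[
\int_{B_\rho}\bar B_i[\phi_i]Z_j=\mathcal O(\ve\mu_i|\log\ve|)\|h\|.
\]
The $b_i$ part is compactly supported and gives $\ve\mu_i|\log\ve|$ directly. For the $\widetilde B_0$ part, the crude bound gives $\int\ve\mu_i|y|^{-2-\sigma}$, which is $\mathcal O(\ve\mu_i)$, so this works.

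For $j=0$, the weight $Z_0\sim\log|y|$ makes the crude bound $\int\ve\mu_i|y|^{-1-\sigma}\log|y|\sim\ve\mu_i\rho^{1-\sigma}\log\rho$. This is small, being at most $|\log\ve|^{-2}\log\rho$, but it is not obviously of the stated form $\mathcal O(\log\rho/\rho^\sigma)$. I would first try to absorb it by comparing $\ve\mu_i\rho^{1-\sigma}$ with $\rho^{-\sigma}$, which needs $\ve\mu_i\rho\lesssim1$; that holds. So the bound is $\lesssim\rho^{-\sigma}\log\rho\cdot(\ve\mu_i\rho)\,\rho^{\sigma}\cdot\rho^{-\sigma}$.

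If this bookkeeping fails, I would instead integrate by parts, using the divergence-like structure of $\widetilde B_0$ to move derivatives onto $Z_0$, where $|\nabla Z_0|\lesssim|y|^{-1}$. Alternatively, I would accept an error $o(1)\|h\|$, which is what is really needed later in Proposition \ref{propsec8}. I expect this $Z_0$ estimate to be the delicate point.
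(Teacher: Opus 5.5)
Your proposal is correct and follows essentially the paper's own route. You extend $h$, solve the fully projected whole-space problem with Lemma~\ref{lemat} (orthogonality to $Z_0,Z_1,Z_2$ removes the logarithmic and linear growth terms), absorb $\bar B_i$ by a contraction using $\ve\mu_i\rho=2\delta_1|\log\ve|^{-2}\to 0$, and read off the $d_{ij}$ by direct integration.

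Your $Z_0$ bookkeeping in fact already closes. The quantity $\ve\mu_i\rho^{1-\sigma}\log\rho=(\ve\mu_i\rho)\,\rho^{-\sigma}\log\rho$ is exactly the paper's bound $(\ve\mu_i)^{\sigma}|\log\ve|^{2\sigma-1}$, so no integration by parts or weaker fallback is needed.

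The only thing to tidy is the extension. Use a genuine linear extension operator for $h$ rather than $\chi h$, and keep the coefficients of $\bar B_i$ unchanged on $B_\rho$, cut off inside $B_{2\rho}$. That way the equation holds on all of $B_\rho$.
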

\subsection{Linear Outer Theory}
We consider the differential operator $L_x$ defined in \eqref{operL}, and study the elliptic equation  
	\begin{equation}
	\label{outerprob}
	L_x[\psi] + g(x) = 0  \inn \R^2 ,
	\end{equation}
	where $g(x)$ is a bounded function satisfying the decay condition
	$$
    \|g\|_{\bar{\nu}}\, :=\, \sup_{x\in \R^2}\left(1+|x|^{\bar{\nu}}\right)|g(x)|< +\infty ,
    $$
	for some $\bar{\nu} >2$. 
    
 The following Proposition holds.
\begin{prop}\label{propouter}
		There exists a solution $\psi(x)$ to \eqref{outerprob} of class $C^{1,\beta}(\R^2)$ for any $\beta\in(0,1)$, which defines a linear operator $\psi= {\mathcal T}^o (g) $ of $g(x)$
		and satisfies the bound
		\begin{equation*}
		|\psi(x)| \,\le \,C\| g\|_{\bar{\nu}} (1+ |x|^2) \quad \forall x \in \R^2,
		\end{equation*}
		for a constant $C>0$.
	\end{prop}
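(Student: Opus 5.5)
The plan is to treat $L_x$ as a uniformly elliptic divergence-form operator and build $\psi$ from the fundamental solution of the Laplacian in the natural helical coordinates. The key observation is that $K$ has eigenvalues $1$ (eigenvector $x^\perp$) and $\frac{h^2}{h^2+|x|^2}$ (eigenvector $x$). Hence $L_x$ is elliptic on every bounded set, but its ellipticity constant degenerates like $|x|^{-2}$ at infinity. For this reason I will not use a global Green's function argument. I will instead use polar coordinates $x=re^{i\theta}$, in which
\[
L_x\psi=\frac{1}{r}\partial_r\Big(\frac{h^2 r}{h^2+r^2}\partial_r\psi\Big)+\frac{1}{r^2}\partial_{\theta\theta}\psi .
\]
This is the standard form of the helical operator: the radial coefficient $h^2/(h^2+r^2)$ multiplies the radial part and the angular part is Euclidean.

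The construction proceeds mode by mode. I expand $g=\sum_k g_k(r)e^{ik\theta}$, with $|g_k(r)|\le C\|g\|_{\bar\nu}(1+r)^{-\bar\nu}$. For each $k$ I solve the ODE
\[
\frac{1}{r}\Big(\frac{h^2 r}{h^2+r^2}\psi_k'\Big)'-\frac{k^2}{r^2}\psi_k=-g_k .
\]
\begin{itemize}
\item For $k=0$, integrate twice, choosing $\psi_0'(r)=-\frac{h^2+r^2}{h^2 r}\int_0^r sg_0(s)\,ds$. Since $\bar\nu>2$, the inner integral is bounded, so $|\psi_0'|\lesssim r$ and $|\psi_0|\lesssim 1+r^2$. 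This mode is what produces the quadratic growth in the stated bound.
\item For $k\neq0$, the homogeneous solutions behave like $r^{\pm|k|}$ near $0$. Near $\infty$ they behave like $r^{\pm|k|h/\sqrt{h^2+r^2}\cdot\ldots}$: the equation becomes $\frac{h^2}{r}(\psi'/r)'\approx k^2\psi/r^2$, an Euler-type equation with exponents $\lambda$ solving $h^2\lambda(\lambda-2)=k^2$. So one root exceeds $2$ and the other is negative. I take the variation-of-parameters solution built from the solution regular at $0$ and the one decaying at $\infty$. A maximum-principle/comparison argument with barrier $1+r^2$ (or $r^2$ plus a constant) then gives $|\psi_k|\le C k^{-2}\|g\|_{\bar\nu}(1+r^2)$ with constants summable in $k$.
\end{itemize}

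A cleaner way to avoid summing Fourier modes is to split $g=g_0+(g-g_0)$. I solve the radial part explicitly as above. For the zero-mean part, I solve on balls $B_n$ with Dirichlet data by Lax–Milgram, then obtain a uniform bound $|\psi_n|\le C\|g\|_{\bar\nu}(1+|x|^2)$ from the supersolution $\bar w=A(1+|x|^2)-B|x|^{2-\bar\nu'}$-type barrier. Here I would use that $L_x(|x|^2)=\frac{4h^2}{h^2+r^2}+\ldots>0$ is bounded below by $c(1+r^2)^{-1}$, which dominates $|g|\lesssim(1+r)^{-\bar\nu}$ since $\bar\nu>2$. Indeed this barrier alone works for all of $g$: $L_x[\,A(1+r^2)\,]=A\cdot\frac{4h^2+2r^2\cdot\ldots}{\ldots}\ge cA(1+r^2)^{-1}\ge |g|$ for $A=C\|g\|_{\bar\nu}$. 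So the plan simplifies to the following steps:
\begin{enumerate}
\item Solve Dirichlet problems on $B_n$.
\item Bound them by $\pm A(1+|x|^2)$ via the comparison principle. The boundary data is $0\le A(1+n^2)$, and the comparison is valid because $L_x$ is uniformly elliptic on $B_n$.
\item Use local $C^{1,\beta}$ estimates for divergence-form equations with smooth coefficients (uniform on each fixed ball) and Arzelà–Ascoli with a diagonal argument to extract a limit $\psi$ solving \eqref{outerprob} in $\R^2$ with the same bound.
\end{enumerate}

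Linearity requires care, because a subsequence limit is not canonically linear. To handle it, I would instead note that the Dirichlet solution operators $g\mapsto\psi_n$ are linear and uniformly bounded on each compact set. I would then take a limit along a single fixed subsequence via a Banach-limit construction, or, more concretely, use the explicit radial computation together with uniqueness. Solutions with growth $o(|x|^{\lambda_+})$, where $\lambda_+>2$, should be unique up to constants by a Liouville argument on the Fourier modes, which restores genuine convergence of $\psi_n-\psi_n(0)$.

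The main obstacle is the sign and size computation of $L_x(|x|^2)$ that makes the barrier work, together with this uniqueness/linearity issue. I expect the barrier to be straightforward. In polar form, $L_x(r^2)=\frac1r\big(\frac{2h^2r^2}{h^2+r^2}\big)'=\frac{4h^4}{(h^2+r^2)^2}$. This decays like $r^{-4}$, which is too fast, so the pure $r^2$ barrier fails. I would therefore try $w=r^2+\log$-corrections or $w=r^2\log(2+r)$, or more sharply use the exact radial $k=0$ solution $w$ with $L_xw=-(1+r)^{-\bar\nu}$. That solution exists with $|w|\lesssim 1+r^2$ by the explicit integration above, and comparison then gives $|\psi_n-c_n|\le C\|g\|_{\bar\nu}|w|$. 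Pinning down this radial supersolution and the normalization constant is the step I would check first.
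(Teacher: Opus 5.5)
The route you finally settle on (Dirichlet problems on $B_n$ for the full $g$, bounded uniformly in $n$ by comparison) has a genuine gap. The obstruction is the \emph{sign}, not the decay rate, of $L_x$ on your barriers.

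To get $|\psi_n|\le \bar w$ from the maximum principle you need $L_x\bar w\le -|g|$ in $B_n$ and $\bar w\ge 0$ on $\partial B_n$. Your candidates fail this:
\begin{itemize}
\item $L_x(1+|x|^2)=4h^4/(h^2+|x|^2)^2>0$, so $A(1+|x|^2)$ is a subsolution, not a supersolution. The same holds for $r^2\log(2+r)$.
\item The regular radial solution of $L_xw=-(1+r)^{-\bar\nu}$ is $w=c-W$, where $W(r)=\int_0^r\frac{h^2+s^2}{h^2s}\int_0^s t(1+t)^{-\bar\nu}\,dt\,ds$ is increasing. So $w\ge 0$ on $\partial B_n$ forces $c\ge W(n)\sim n^2$.
\end{itemize}
No barrier can do better. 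Since $L_x$ commutes with rotations, you can average $L_x\bar w\le -|g|$ over circles and integrate the radial ODE. This gives $\bar w(0)\ge \frac{n^2}{4\pi h^2}(1+o(1))\int_{\R^2}|g|$. The bound is sharp: for radial $g\ge 0$ the Dirichlet solutions are explicit, and $\psi_n(0)\sim \frac{n^2}{4\pi h^2}\int_{\R^2}g\to\infty$.

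Hence the $\psi_n$ are not locally bounded uniformly in $n$; only $\psi_n-\psi_n(0)$ can converge. The estimate $|\psi_n-c_n|\le C\|g\|_{\bar\nu}|w|$ you invoke is an oscillation bound, and comparison with zero boundary data does not give it. So step 3 has no bounded sequence to which Arzel\`a--Ascoli can be applied.

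The repair is the splitting you wrote down and then abandoned.
\begin{itemize}
\item \textbf{Radial part.} Use your explicit formula $\psi_0(r)=-\int_0^r\frac{h^2+s^2}{h^2s}\int_0^s t\,g_0(t)\,dt\,ds$. It is linear in $g$ and satisfies $|\psi_0|\le C\|g\|_{\bar\nu}(1+r^2)$ because $\bar\nu>2$. It is the sole source of the quadratic growth.
\item \textbf{Modes $k\neq 0$.} Exhaustion does work mode by mode, but with a constant barrier; for $|k|=1$ and large $h$, $1+r^2$ is not a supersolution. Indeed $L_kM=-k^2M/r^2$ and $r^2|g_k(r)|\le \|g\|_{\bar\nu}$. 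So $M=k^{-2}\|g\|_{\bar\nu}$ gives $|\phi_{k,n}|\lesssim k^{-2}\|g\|_{\bar\nu}$ uniformly in $n$, which is summable in $k$.
\end{itemize}

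Your description of these modes at infinity is also wrong. The term $k^2\phi/r^2$ dominates the degenerate radial part, so there is no Euler-type power behaviour; homogeneous solutions grow or decay like $r^{1/2}e^{\pm|k|r/h}$.

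For uniqueness you only need the following. For $k\neq 0$, the solution regular at $0$ is positive and increasing, so $\big(\frac{h^2r}{h^2+r^2}\phi'\big)'=\frac{k^2}{r}\phi$ forces growth at least like $r^2\log r$. For $k=0$, the regular homogeneous solutions are constants. Therefore $L_x\phi=0$ with $|\phi|\le C(1+|x|^2)$ implies $\phi$ is constant.

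This makes the mode-wise limits canonical. Summing the modes (uniformly convergent) gives a linear $\mathcal T^o$ with the stated bound, and local $W^{2,p}$ estimates give $C^{1,\beta}$. The paper itself gives no argument here; it simply defers to Proposition 7.1 of the cited travelling-helices paper.
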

\begin{proof}
See \cite{MR4417384}*{Proposition 7.1}.
\end{proof}

\section{Solvability of the Inner-Outer System}\label{Section projected}
With $\phi_{out}$ already obtained in Section \ref{subsecouter} as a solution of the Outer problem \eqref{poisreduced}, we now aim to utilise the linear solvability theories developed in Section \ref{Section 7} to construct, for each $i=1,\dots,N$, a solution $\phi_{in,i}$ for the Inner problem
\[\Delta_{y}\phi_{in,i}+\gamma\Gamma^{\gamma-1}_{+}\phi_{in,i}+\bar{B}_i[\phi_{in,i}]
+H_i\left(\phi_{in},\phi_{out}\right) = 0 \quad \mbox{in} \,\, B_{\rho}.\]
We recall the notation $\phi_{in}=\left(\phi_{in,1},\dots,\phi_{in,N}\right)$,\, and that the solution $\phi_{out}$ satisfies \eqref{boundphiout} and \eqref{phioutP}. In addition,  $\rho=\frac{2\delta_1}{\ve\mu_i|\log\ve|^2}$, while the definitions of $\Bar{B}_i$ and $H_i(\phi_{in},\phi_{out})$ appear in \eqref{defofBinner} and \eqref{defHinner}.

As a starting point, we decompose \[\phi_{in,i}=\phi_{i,1}+\phi_{i,2},\quad i=1,\dots,N,\] so that for the functions $Z_j$ in \eqref{elemkernel} and scalars $d_{ij}, j=0,1,2$ as in Proposition \ref{prop1}, the function $\phi_{i,1}$ solves the auxiliary projected problem
\begin{equation}\label{problemforphi1}
\Delta_{y}\phi_{i,1}+\gamma\Gamma^{\gamma-1}_{+}\phi_{i,1}+\bar{B}_i[\phi_{i,1}]+\bar{B}_i[\phi_{i,2}]+H_i(\phi_{in},\phi_{out})=\sum_{j=0}^{2}d_{ij}\gamma\Gamma^{\gamma-1}_{+}Z_j \quad \mbox{in} \quad B_{\rho},
\end{equation}
whereas $\phi_{i,2}$ is required to satisfy the linear equation 
\begin{equation}\label{problemforphi2}
\Delta_{y}\phi_{i,2}+\gamma\Gamma^{\gamma-1}_{+}\phi_{i,2} +d_{i0}\gamma\Gamma^{\gamma-1}_{+}Z_0=0 \quad \mbox{in}\quad\R^2.
\end{equation}
In this setting, we recognise that the Inner–Outer system in \eqref{inprobexp} and \eqref{poisreduced} admits a solution of the form \[(\phi_{in},\phi_{out}), \quad \phi_{in}=\left(\phi_{1,1}+\phi_{1,2},\dots,\phi_{N,1}+\phi_{N,2}\right),\] as long as we further attain the coupled conditions 
\begin{equation}\label{redconds}d_{ij}\left[H_i(\phi_{in},\phi_{out})+\bar{B}_i[\phi_{i,2}]\right]=0, \quad i=1,\dots,N, \, j=1,2.
\end{equation}
We postpone the analysis of \eqref{redconds} to Section \ref{reduced}, where we appropriately adjust the location of the vortex points $P_1,\dots,P_N$ in \eqref{formpoints} to achieve their solvability.

We currently examine the equation for $\phi_{i,2}$ in \eqref{problemforphi2}. Setting $y=re^{i\theta}, r=|y|,$ and denoting by $\xi_0$ the unique root of $Z_0$ in $(0,1),$ a smooth radial solution of this equation is given by
\begin{equation}\label{explicitphi2}
\begin{aligned}
\phi_{i,2}(r)&=-d_{i0} Z_0 (r) \int_{\xi_0}^{r}\frac{\dd s}{sZ_0(s)^2} \int_{0}^{s} \gamma\Gamma^{\gamma-1}_{+}(\rho) Z_{0}^2(\rho)\rho \, \dd \rho\\
&\eqcolon d_{i0}\hat{\phi}_{2}(r).
\end{aligned}
\end{equation}
Since the term
$\Gamma^{\gamma-1}_{+}$ appearing in \eqref{explicitphi2} is supported in $B_1(0)$, for any $s>1$ it holds \[\int_{0}^{s}\gamma\Gamma^{\gamma-1}_{+}(\rho)Z_0(\rho)^2\rho \, \dd \rho = \int_{0}^{1}\gamma\Gamma^{\gamma-1}_{+}(\rho)Z_0(\rho)^2\rho \, \dd \rho, \]
which in turn yields the logarithmic growth bounds 
\begin{equation}\label{estimateforphi2}
|\hat{\phi}_{2}(r)|\leq C \log(2+r), \quad |\phi_{i,2}(r)| \leq C|d_{i0}|\log(2+r), \quad \mbox{for all}\,\,r>0.
\end{equation}
 The Proposition below is the main result of this section.
\begin{prop}\label{propsec8}
Let $\sigma > 0$ and $\beta \in (0,1)$. Then, for all $\ve>0$ sufficiently small, there exist constants $C>0$ and $\sigma^{*}\in(0,\sigma$), functions $\phi_{out},\,\mathbf{\Phi}_1=\left(\phi_{1,1},\dots,\phi_{N,1}\right),\mathbf{\Phi}_2=\left(\phi_{1,2},\dots,\phi_{N,2}\right)$ and coefficients $d_{i0},\,i=1,\dots,N,$ solving \eqref{poisreduced}, \eqref{problemforphi1} and \eqref{problemforphi2}, satisfying
 \begin{equation}\label{boundsforsoln}
    \left\|(1+|x|^2)^{-1}\phi_{out}\right\|_{\infty}\leq C\ve^{1+\sigma^{*}}, \quad \|\phi_{i,1}\|_{*,\sigma,B_{\rho}}\leq C\ve\mu_i|\log\ve|, \quad |d_{i0}|\leq C (\ve\mu_i)^{1+\sigma}|\log\ve|^{2+2\sigma},
 \end{equation}
 for all $i=1,\dots,N$.
 
 In the above, $\mu_i$ denote the scaling parameters of Proposition \ref{propscaling}, \,$\rho=\frac{2\delta_1}{\ve\mu_i|\log\ve|^2}$, and $\|\cdot\|_{*,\sigma,B_{\rho}}$ is defined in \eqref{starnorm}.
\end{prop}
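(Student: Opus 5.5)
We would obtain Proposition \ref{propsec8} by a fixed point argument for the vector $(\mathbf{\Phi}_1,\mathbf{d}_0)$ with $\mathbf{d}_0=(d_{10},\dots,d_{N0})$, taking $\phi_{i,2}=d_{i0}\hat\phi_2$ from \eqref{explicitphi2}. The outer component is then determined by $\phi_{in,i}=\phi_{i,1}+\phi_{i,2}$ through \eqref{poisreduced}, Proposition \ref{propouter} and the normalisation \eqref{phioutP}. We work in the ball
\[
\mathcal{B}=\Big\{\|\phi_{i,1}\|_{*,\sigma,B_\rho}\le M\ve\mu_i|\log\ve|,\ |d_{i0}|\le M(\ve\mu_i)^{1+\sigma}|\log\ve|^{2+2\sigma}\Big\}
\]
for a large constant $M$.

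The first step is to check that every element of $\mathcal{B}$ satisfies the a priori hypotheses \eqref{b100} and \eqref{propertiesforphi1}. For $\phi_{i,1}$ this follows from the $*$-norm. For $\phi_{i,2}$, the bound \eqref{estimateforphi2} gives $|\phi_{i,2}|\le C|d_{i0}|\log(2+|y|)$. On $B_\rho$ we have $\log\rho\sim|\log\ve|$, and since $|d_{i0}|$ is smaller than $\ve\mu_i|\log\ve|\rho^{-\sigma}$ up to the factor $|\log\ve|$, the decay \eqref{propertiesforphi1} holds with a slightly smaller $\sigma$. Lemma \ref{lemmaout} and the computation after it then yield $\phi_{out}$ with \eqref{boundphiout}, which is the first estimate in \eqref{boundsforsoln}. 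The map $(\mathbf{\Phi}_1,\mathbf{d}_0)\mapsto\phi_{out}$ is Lipschitz with constant $O(\ve^{\sigma^*})$ after rescaling.

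Next, we define the map
\[
\phi_{i,1}\mapsto T_i\big[\bar B_i[\phi_{i,1}]+\bar B_i[\phi_{i,2}]+H_i(\phi_{in},\phi_{out})\big]
\]
using Proposition \ref{prop1}, and we estimate the right-hand side in $\|\cdot\|_{2+\sigma,\beta,B_\rho}$ term by term.
\begin{itemize}
\item The error $\widetilde E_i$ is $O(\ve\mu_i|\log\ve|)$ and is supported in $B_2$, by Proposition \ref{errorprop1}.
\item The term $(\gamma\Gamma_+^{\gamma-1}+b_i)\phi_{out}$ is compactly supported and of size $O(\ve^{1+\sigma^*})$, which is $o(\ve\mu_i|\log\ve|)$.
\item The term $b_i\phi$ is compactly supported, with a gain of $\ve\mu_i|\log\ve|$.
\item For $\widetilde B_0[\phi]$, the coefficients are $O(\ve\mu_i|y|)$ on second derivatives and $O(\ve\mu_i)$ on first derivatives. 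Together with the weights in \eqref{starnorm}, this gives a gain of $\ve\mu_i\rho\sim|\log\ve|^{-2}$ on $B_\rho$.
\item The quadratic term $\mathcal N_i$ is compactly supported and quadratically small.
\end{itemize}
The contribution $\bar B_i[\phi_{i,2}]$ has size $O(|d_{i0}|\,\ve\mu_i\log\rho)$, which is much smaller than the target. Altogether, the right-hand side is $\le C\ve\mu_i|\log\ve|+o(1)M\ve\mu_i|\log\ve|$, so the new $\phi_{i,1}$ stays in $\mathcal{B}$.

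The main obstacle is the update rule for $d_{i0}$, which is exactly the strong coupling described in Remark \ref{decoupling}. Since $\int\gamma\Gamma_+^{\gamma-1}Z_0\ne0$, the quantity $d_{i0}$ given by Proposition \ref{prop1} is a priori of size $O(\ve\mu_i|\log\ve|)$, dominated by $\int\widetilde E_iZ_0$. That is too large, because $\phi_{i,2}$ would then grow like $\ve\mu_i|\log\ve|^2$ and break \eqref{propertiesforphi1}. Our plan is to use the free corrections $\mu_i^*$. In $\widetilde E_i$ and in $b_i$ the radial constant $\nu'(1)\log(1+\mu_i^*/\mu_i^0)$ enters through the term $\kappa_i\gamma\Gamma_+^{\gamma-1}\nu'(1)\log(1+\mu_i^*/\mu_i^0)$. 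We therefore choose $\mu_i^*$ through the scalar equation
\[
\gamma_0\int_{B_\rho}\big(\bar B_i[\phi_{in,i}]+H_i\big)Z_0=0 .
\]
This equation is solvable because $\int\gamma\Gamma_+^{\gamma-1}Z_0\neq0$ provides a nonzero derivative in $\log(1+\mu_i^*/\mu_i^0)$. It is essential to check that the terms of order $\ve\mu_i|\log\ve|$ which are odd in $y$ (those from $y_1\Gamma_+^\gamma$ and from $\widehat{\mathcal A}_{1,i}y_1$ and $\widehat{\mathcal A}_{2,i}y_2$) integrate to zero against the radial $Z_0$. Once they vanish, the remaining radial source is of order $\ve^{1+\tilde\sigma}$, which makes $\mu_i^*$ consistent with \eqref{aprioriscalingperp} and \eqref{scalingpert}. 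With this choice, the leading part of $d_{i0}$ cancels, and only the remainder in Proposition \ref{prop1} survives:
\[
|d_{i0}|\le C\,\frac{\log(2+\rho)}{1+\rho^\sigma}\,\ve\mu_i|\log\ve|\lesssim(\ve\mu_i)^{1+\sigma}|\log\ve|^{2+2\sigma},
\]
where we absorb the factor $\log\rho$ by slightly adjusting $\sigma$.

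Finally, the dependence on $\mu_i^*$ enters $\Psi_0$ and the linear operators only through $\log(1+\mu_i^*/\mu_i^0)=O(\ve^{1+\tilde\sigma})$, so all the previous estimates are uniform. The full map on $\mathcal{B}$, augmented with $\mu^*$, is then a contraction, since every coupling carries a small factor: $o(1)$ from $\bar B_i$, $O(\ve^{\sigma^*})$ from $\phi_{out}$, and a quadratic factor from $\mathcal N_i$. The Banach fixed point theorem gives the solution together with the bounds \eqref{boundsforsoln}.
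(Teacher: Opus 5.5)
Your proposal is essentially the paper's argument: the same Banach fixed point for $(\mathbf{\Phi}_1,(d_{10},\dots,d_{N0}))$ in the same ball, with $\phi_{i,2}=d_{i0}\hat\phi_2$, $\phi_{out}$ obtained from Lemma \ref{lemmaout} and Proposition \ref{propouter}, and the corrections $\mu_i^*$ used to remove the radial $Z_0$-coupling. The one difference is in how $\mu_i^*$ is fixed. The paper sets it explicitly by $-\kappa_i\nu'(1)\log(1+\mu_i^*/\mu_i^0)=\phi_{out}(P_i)$ and leaves the residual projection $O(\ve^2\mu_i^2|\log\ve|^2)$ inside $d_{i0}$, whereas your implicit condition cancels the whole projection. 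Two small corrections to your write-up: the $O(\ve^{1+\sigma^*})$ radial source that forces $\mu_i^*$ is $\int\gamma\Gamma_+^{\gamma-1}\phi_{out}Z_0\approx\phi_{out}(P_i)\int\gamma\Gamma_+^{\gamma-1}Z_0$, not $\widetilde E_i$, whose radial part is only $O(\ve^2\mu_i^2|\log\ve|^2)$ apart from the $\mu_i^*$ term itself. Also, since $T_i$ already incorporates $\bar B_i$, the update should read $T_i\big[\bar B_i[\phi_{i,2}]+H_i\big]$ without the extra $\bar B_i[\phi_{i,1}]$.
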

\begin{proof}
We recast the system as a fixed point problem of the form\begin{equation}\label{fixedfinal}
\big(\mathbf{\Phi}_1,(d_{10},\dots,d_{N0})\big) =\widetilde{\mathcal{F}}\big(\mathbf{\Phi}_1,(d_{10},\dots,d_{N0})\big),
\end{equation}
where, for each $i=1,\dots,N,$ 
\begin{equation*}
\begin{aligned}
\phi_{i,1}&=T_i\left[H_i(\mathbf{\Phi}_1+\mathbf{\Phi}_2,\phi_{out})+\bar{B}_{i}[\phi_{i,2}]\right]\in \mathcal{X}_{*}\defeq\{\phi \in C^{2,\beta}(B_{\rho})\ : 
  \|\phi\|_{*, \sigma,B_{\rho}} <+\infty\},\\
d_{i0}&=\gamma_0\int_{\R^2} \left(H_i(\mathbf{\Phi}_1+\mathbf{\Phi}_2,\phi_{out})+\bar{B}_i[\phi_{i,2}]\right)Z_0, \quad \gamma_0^{-1}=\int_{\R^2}\gamma\Gamma^{\gamma-1}_{+}Z_0^2.
\end{aligned}
\end{equation*}
For clarity, we recall that $T_i$ is the linear operator of Proposition \ref{prop1}, \,$\phi_{i,2}(y)=d_{i0}\hat{\phi}_{2}(y)$ is given in \eqref{explicitphi2}, while Section \ref{subsecouter} provides the outer solution $\phi_{out}=\phi_{out}(\mathbf{\Phi}_1+\mathbf{\Phi}_2)$ through Proposition \ref{propouter}, satisfying \eqref{boundphiout} and \eqref{phioutP}. Moreover, the definitions of $H_i$ and $\bar{B}_i$ are introduced in \eqref{defofBinner} and \eqref{defHinner}.

By means of the Banach's fixed point Theorem, we seek a solution in the ball 
\begin{equation}\label{contractionball}
\begin{aligned}
\mathcal{B}=\Big\{\big(\mathbf{\Phi}_1,\left(d_{10},\dots,d_{N0}\right)\big)\in \mathcal{X}^{N}_{*}\times \R^{N}: &\|\phi_{i,1}\|_{*,\sigma,B_\rho}\leq \widehat{C}\ve\mu_i|\log\ve|,\\ &\,|d_{i0}|\leq \widehat{C}(\ve\mu_i)^{1+\sigma}|\log\ve|^{2+2\sigma},\,i=1,\dots,N\Big\},
\end{aligned}
\end{equation}
for some constant $\widehat{C}>0$ independent of $\ve$ and $\sigma^{*}$, to be chosen later.

 In this regard, we first need to show that $\widetilde{\mathcal{F}}\left(\mathcal{B}\right)\subset \mathcal{B}$. Assuming $\big(\mathbf{\Phi}_1,\left(d_{10},\dots,d_{N0}\right)\big)\in \mathcal{B},$ we obtain
\begin{equation*}
\begin{aligned}
\left|H_i\left(\mathbf{\Phi}_1+\mathbf{\Phi}_2,\phi_{out}\right)\right| &\leq C\Gamma^{\gamma-2}_{+}\bigg(\sum_{j=1}^{N}\left(|\tilde{\eta}_j\phi_{j,1}|^2+|\tilde{\eta}_j\phi_{j,2}|^2\right)+|\phi_{out}|^2\bigg) + |\widetilde{E}_i| +\left(\gamma\Gamma^{\gamma-1}_{+}+b_i(y)\right)|\phi_{out}|\\
&\leq C\ve\mu_i|\log\ve| \left(\Gamma\left(\frac{y}{2}\right)\right)^{\gamma-1}_{+},
\end{aligned}
\end{equation*}
thus we directly get \[\left\|H_i\left(\mathbf{\Phi}_1+\mathbf{\Phi}_2,\phi_{out}\right)\right\|_{2+\sigma,\beta,B_{\rho}} \leq C \ve\mu_i|\log\ve|.
\]
In addition, using \eqref{estimateforphi2} we find
\begin{equation*}
\begin{aligned}
\left|\bar{B}_i[\phi_{i,2}]\right| &\leq C\ve\mu_i\left(|y||D^{2}_{y}\phi_{i,2}|+|D_y \phi_{i,2}|\right)+\mathcal{O}\left(\ve\mu_i |\log\ve|\Gamma^{\gamma-2}_{+}\right)\phi_{i,2}\\
&\leq \frac{C\ve\mu_i|d_{i0}|}{2+|y|}+\mathcal{O}\left(\ve\mu_i|\log\ve|\Gamma^{\gamma-2}_{+}\right)|d_{i0}|\log(2+|y|),\\
\end{aligned}
\end{equation*}
so we further have
\[\left\|\bar{B}_i[\phi_{i,2}]\right\|_{2+\sigma,\beta,B_{\rho}}\leq C\ve\mu_i|\log\ve|.\]
Due to Proposition \ref{prop1}, we then deduce 
\begin{equation}\label{firsterr71}
\begin{aligned}
\left\|\phi_{i,1}\right\|_{*,\sigma,B_{\rho}}=\big\|T_i\left[H_i(\mathbf{\Phi}_1+\mathbf{\Phi}_2,\phi_{out})+\bar{B}_i[\phi_{i,2}]\right]\big\|_{*,\sigma,B_{\rho}} \leq C_1\ve\mu_i|\log\ve|,
\end{aligned}
\end{equation}
for some $C_1>0$.

To proceed, we obtain the estimate 
\begin{equation}\label{estimateBphi2Zo}
\begin{aligned}
\int_{B_{\rho}}\bar{B}_i[\phi_{i,2}]Z_0 
&\leq C\ve\mu_i  |d_{i0}|\int_{B_{\rho}}\frac{Z_0}{2+|y|}+ \int_{B_{\rho}}\mathcal{O}\left(\ve\mu_i|\log\ve|\Gamma^{\gamma-2}_{+}\right)|d_{i0}|\log(2+|y|) Z_0\\
&=\mathcal{O}\left(\frac{|d_{i0}|}{|\log\ve|}\right),
\end{aligned}
\end{equation}
and 
\begin{equation}\label{estissue}
\begin{aligned}
\int_{B_{\rho}}H_i\left(\mathbf{\Phi}_1+\mathbf{\Phi}_2,\phi_{out}\right)Z_0 &\leq C\int_{B_{\rho}}\Gamma^{\gamma-2}_{+}\bigg(\sum_{j=1}^{N}\big(|\tilde{\eta}_j\phi_{j,1}|^2+|\tilde{\eta}_j\phi_{j,2}|^2\big)+|\phi_{out}|^2\bigg)Z_0 +C\int_{B_{\rho}} \widetilde{E}_iZ_0\\
&\hspace{4mm}+C\int_{B_{\rho}}\left(\gamma\Gamma^{\gamma-1}_{+}+\mathcal{O}\big(\ve\mu_i|\log\ve|\Gamma^{\gamma-2}_{+}\big)\right)\phi_{out}Z_0.
\end{aligned}
\end{equation}
At this stage, we recognise that
\begin{equation}\label{estquad}
\begin{aligned}
\int_{B_{\rho}}\Gamma^{\gamma-2}_{+}\bigg(\sum_{j=1}^{N}\left(|\tilde{\eta}_j\phi_{j,1}|^2+|\tilde{\eta}_j\phi_{j,2}|^2\right)+|\phi_{out}|^2\bigg)Z_0&=\mathcal{O}\bigg(\sum_{j=1}^{N}\|\phi_{j,1}\|^2_{*,\sigma,B_{\rho}}\bigg)\\
&\leq C \ve^2\mu_i^2|\log\ve|^2.
\end{aligned}
\end{equation}
Nevertheless, even though $Z_0$ is a radial function, recalling the form of $\widetilde{E}_i$ from Proposition \ref{errorprop1}, namely 
\begin{align*}
\widetilde{E}_i&= \kappa_i\left(\frac{3R_ih^2+R_i^3}{2h\left(h^2+R_i^2\right)^{\frac3 2}}\right)\ve\mu_i y_1\Gamma^{\gamma}_{+} +\kappa_i\gamma\Gamma^{\gamma-1}_{+}\Bigg[\nu'(1)\log\left(1+\frac{\mu^{*}_i}{\mu^{0}_i}\right)+\left(c_{1,i}\ve\mu_iy_1+c_{2,i}\ve^2\mu_i^2|y|^2\right)\Gamma(y)\\
   &+\ve\mu_i y_1|\log\ve|\widehat{\mathcal{A}}_{1,i}(P)+\ve\mu_i y_2|\log\ve|\widehat{\mathcal{A}}_{2,i}(P)+\mathcal{O}\left(\ve^2\mu_i^2|\log\ve|^2 |y|^2\right)\Bigg]+\mathcal{O}\left(\ve^2\mu_i^2|\log\ve|^2|y|^2\Gamma^{\gamma-2}_{+}\right),
    \end{align*}
for the remaining integrals in \eqref{estissue} we obtain 

\begin{equation}\label{strong}
\int_{B{\rho}}\widetilde{E}_iZ_0 =\mathcal{O}\left(\ve^{1+\tilde{\sigma}}\right), \quad \int_{B_{\rho}}\gamma\Gamma^{\gamma-1}_{+}\phi_{out}Z_0=\mathcal{O}\left(\ve^{1+\sigma^{*}}\right),
\end{equation}
with $\tilde\sigma>0$ and $\sigma^{*}\in(0,\sigma)$ as in \eqref{scalingpert} and \eqref{boundphiout}, respectively.

As already alluded to in Remark \ref{decoupling}, the estimate for the second integral in \eqref{strong} is a manifestation of the strong coupling between the Inner and Outer problems, since the integrals in \eqref{estquad} and \eqref{strong} involving the inner perturbations $\mathbf{\Phi}_{1},\mathbf{\Phi}_2$ and the outer correction $\phi_{out}$ are of size \[\mathcal{O}\left(\ve^{1+\sigma^{*}}\right) \gg \mathcal{O}\left(\left(\ve\mu_i\right)^{1+\sigma}|\log\ve|^{2+2\sigma}\right)\]
for all small $\ve>0.$

To sufficiently decouple the system and ensure that $\widetilde{\mathcal{F}}(\mathcal{B})\subset\mathcal{B}$, we now make an explicit choice of the perturbations $\mu^{*}_i$ defining the scaling parameters $\mu_i=\mu_i^{0}+\mu_i^{*},\,i=1,\dots,N$ in \eqref{scalingansatz},  and verify a posteriori the estimate in \eqref{scalingpert}. We notice that in the bounded region
\[\mathcal{U}\defeq\Big\{x\in\R^2: |A_i^{-1}(x-P_i)|<\frac{2\delta_1}{|\log\ve|^2}\Big\},\] since $x-P_i=A_i\ve\mu_i y$, we can use the expansion
\begin{equation}\label{expphiout}
\phi_{out}(x)=\phi_{out}(P_i)+ \mathcal{O}\big(\ve\mu_i|A_iy|\|\nabla\phi_{out}\|_{L^{\infty}(\mathcal{U})}\big),
\end{equation}
where \eqref{boundphiout},\eqref{phioutP}, and a standard elliptic estimate give \[\|\nabla\phi_{out}\|_{L^{\infty}(\mathcal{U})}=\mathcal{O}(\ve^{1+\sigma^{*}}
),\] with $0<\sigma^{*}<\sigma$ as in \eqref{boundphiout}.

We then require
\begin{equation}\label{scalingpertexplicit}
-\kappa_i\nu'(1)\log\left(1+\frac{\mu_i^{*}}{\mu_i^{0}}\right)=\phi_{out}(P_i), \quad i=1,\dots,N,
\end{equation}
which implies $-\kappa_i\nu'(1)\log\left(1+\frac{\mu_i^{*}}{\mu_i^{0}}\right)=\mathcal{O}\left(\ve^{1+\sigma^{*}}\right)$ due to \eqref{boundphiout}, so that a posteriori $\tilde{\sigma}=\sigma^{*}$ in \eqref{scalingpert}. 

Making use of \eqref{expphiout} and \eqref{scalingpertexplicit} in \eqref{estissue} we get
\begin{equation}\label{finalestHiZ_0}
\int_{B_{\rho}}H_i\left(\mathbf{\Phi}_1+\mathbf{\Phi}_2,\phi_{out}\right)Z_0=\mathcal{O}\left(\ve^2\mu_i^2|\log\ve|^2\right),
\end{equation}
therefore combining \eqref{estimateBphi2Zo} and \eqref{finalestHiZ_0}, Proposition \ref{prop1} yields
\begin{equation}\label{contr2}
\begin{aligned}
|d_{i0}|&=\mathcal{O}\left(\frac{|d_{i0}|}{|\log\ve|}\right)+\mathcal{O}\left(\frac{\log(2+\rho)}{1+\rho^{\sigma}}\right)\left\|H_i\left(\mathbf{\Phi}_1+\mathbf{\Phi}_2,\phi_{out}\right)+\bar{B}_i[\phi_{i,2}]\right\|_{2+\sigma,\beta,B_{\rho}}\\
&\leq C_2 (\ve\mu_i)^{1+\sigma}|\log\ve|^{2+2\sigma},
\end{aligned}
\end{equation}
for some $C_2>0.$

Collecting \eqref{firsterr71} and \eqref{contr2}, we conclude that if $\widehat{C}>0$ in \eqref{contractionball} is chosen sufficiently large (independently of $\ve>0$ and $0<\sigma^{*}<\sigma$), then $\widetilde{\mathcal{F}}(\mathcal{B})\subset \mathcal{B}$. 

To complete the proof, it remains to establish that $\widetilde{\mathcal{F}}$ in \eqref{fixedfinal} is a contraction mapping in $\mathcal{B}$. In order to do so, we define 
\[\widetilde{\mathbf{\Phi}}^{\ell}=\sum_{i=1}^{N}\tilde{\eta}_i\left(\phi^{\ell}_{i,1}+\phi^{\ell}_{i,2}\right)+\phi^{\ell}_{out}, \quad\mbox{with} \quad \phi_{out}^{\ell}=\phi_{out}[\mathbf{\Phi}_{1}^{\ell}+\mathbf{\Phi}_{2}^{\ell}], \quad \ell=1,2,\]
and then set
\[H_i^{\ell}=H_i\left(\widetilde{\mathbf{\Phi}}^{\ell},\phi_{out}^{\ell}\right),\quad d^{\ell}_{i0}=\gamma_{0}\int_{R^2} \left(H_i^{\ell}+\bar{B}_i[\phi^{\ell}_{i,2}]\right)Z_0, \quad \gamma_0^{-1}=\int_{\R^2}\gamma\Gamma^{\gamma-1}_{+}Z_0^2.\]
Using the notation introduced above and assuming  $\big(\widetilde{\mathbf{\Phi}}^{\ell},\left(d_{10}^{\ell},\dots,d_{N0}^{\ell}\right)\big)\in\mathcal{B}$ for $\ell=1,2,$ we initially deduce from  \eqref{outeroperatorphiin} that
\begin{equation}\label{phioutcontraction}
\|\phi^1_{out}-\phi^2_{out}\|_{\infty}\leq C\bigg((\ve\mu)^{\sigma}|\log\ve|^{\zeta}\sum_{j=1}^{N}\|\phi^1_{j,1}-\phi^2_{j,1}\|_{*,\sigma,B_{\rho}}+|\log\ve|^{\zeta}\sum_{j=1}^{N}|d_{j0}^1-d_{j0}^2|\bigg),
\end{equation}
for some $\zeta>0$ and $\mu$ as in \eqref{maxscaling}.

Furthermore, since 
\begin{equation*}
      H_i^1-H_i^2=\mathcal{N}_i\left(\widetilde{\mathbf{\Phi}}^1\right)-\mathcal{N}_i\left(\widetilde{\mathbf{\Phi}}^2\right)+ \left(\gamma\Gamma^{\gamma-1}_{+}+b_i\right)\left(\phi_{out}^{1}-\phi_{out}^{2}\right),
\end{equation*}
we infer that
\begin{equation*}
\begin{aligned}
|H_i^1-H_i^2|&\leq C\Gamma^{\gamma-2}_{+}\bigg(\sum_{j=1}^{N}\tilde{\eta}_j\big(|\phi_{j,1}^1-\phi_{j,1}^2|^2+|\phi_{j,2}^1-\phi_{j,2}^2|^2\big)+|\phi^1_{out}-\phi^2_{out}|^2\bigg)\\
&\hspace{3mm}+\left(\gamma\Gamma^{\gamma-1}_{+}
+b_i\right)|\phi^1_{out}-\phi^2_{out}|,
\end{aligned}
\end{equation*}
where we obtain the estimate
\begin{equation}\label{Esth1minush2}
\begin{aligned}
\|H_i^1-H_i^2\|_{2+\sigma,\beta,B_{\rho}}&\leq C \Big(\|\phi_{out}^1-\phi_{out}^2\|_{\infty}+\sum_{j=1}^{N}\big(\|\phi^{1}_{j,1}-\phi^{2}_{j,1}\|^2_{*,\sigma,B_{\rho}}+|d^{1}_{j0}-d^{2}_{j0}|^2\big)+\|\phi_{out}^1-\phi_{out}^2\|^2_{\infty}\Big)\\
&\hspace{4mm}+C\ve\mu_i|\log\ve|\left(\sum_{j=1}^{N}\Big(\|\phi^{1}_{j,1}-\phi^{2}_{j,1}\|_{*,\sigma,B_{\rho}}+|d^{1}_{j0}-d^{2}_{j0}|\Big)+\|\phi_{out}^1-\phi_{out}^2\|_{\infty}\right).
\end{aligned}
\end{equation}
Moreover, due to \eqref{explicitphi2} and \eqref{estimateforphi2} we have \[\bar{B}_i[\phi^{1}_{i,2}-\phi^{2}_{i,2}]=\mathcal{O}(|d^{1}_{i0}-d^{2}_{i0}|)\bar{B}_i[\hat{\phi}_2], \quad \mbox{where} \quad \bar{B}_i[\hat{\phi}_{2}]\leq \frac{C\ve\mu_i}{2+|y|}+\mathcal{O}\left(\ve\mu_i|\log\ve|\Gamma^{\gamma-2}_{+}\right)\log(2+|y|),\]
hence we get
\begin{equation}\label{estforBfinal}
\left\|\bar{B}_i[\phi^{1}_{i,2}-\phi^{2}_{i,2}]\right\|_{2+\sigma,\beta,B_{\rho}} \leq \frac{\mathcal{O}\left(|d_{i0}^1-d_{i0}^2|\right)}{(\ve\mu_i)^{\sigma}|\log\ve|^{2+2\sigma}}.
\end{equation}
In addition, Proposition \ref{prop1} gives
\begin{equation}\label{finalestimates1}
\begin{aligned}
    |d^{1}_{i0}-d^{2}_{i0}|&\leq \mathcal{O}\left(\frac{\log(2+\rho)}{1+\rho^{\sigma}}\right)\left\|H_i^1-H_i^2+\bar{B}_i[\phi^{1}_{i,2}-\phi^{2}_{i,2}]\right\|_{2+\sigma,\beta,B_{\rho}}+\mathcal{O}\left(\frac{|d_{i0}^1-d_{i0}^2|}{|\log\ve|}\right)\\
    &\leq C(\ve\mu_i)^{\sigma}|\log\ve|^{1+2\sigma}\Bigg[\|\phi_{out}^1-\phi_{out}^2\|_{\infty}+\sum_{j=1
}^{N}\left(\|\phi^{1}_{j,1}-\phi^{2}_{j,1}\|^2_{*,\sigma,B_{\rho}}+|d^{1}_{j0}-d^{2}_{j0}|^2\right)\\
&\hspace{4mm}+\|\phi_{out}^1-\phi_{out}^2\|^2_{\infty}
+\ve\mu_i|\log\ve|\Big(\sum_{j=1}^{N}\big(\|\phi^{1}_{j,1}-\phi^{2}_{j,1}\|_{*,\sigma,B_{\rho}}+|d^{1}_{j0}-d^{2}_{j0}|\big)+\|\phi_{out}^1-\phi_{out}^2\|_{\infty}\Big)\Bigg]\\
&\hspace{4mm}+\mathcal{O}\left(\frac{|d_{i0}^1-d_{i0}^2|}{|\log\ve|}\right),
\end{aligned}
\end{equation}
while using \eqref{Esth1minush2}, \eqref{estforBfinal}, \eqref{finalestimates1}  and Proposition \ref{prop1} once again, we find 
\begin{equation}\label{finalestimates2}
\begin{aligned}
&\|\phi_{i,1}^1-\phi_{i,1}^2\|_{*,\sigma,B_{\rho}}=\big\|T_i\big[H_i^1-H_i^2+\bar{B}_i[\phi_{i,2}^1-\phi_{i,2}^2]\big]\big\|_{*,\sigma,B_{\rho}}\\
&\leq C\Bigg[\|\phi_{out}^1-\phi_{out}^2\|^2_{\infty}+\sum_{j=1}^{N}\Big(\|\phi^{1}_{j,1}-\phi^{2}_{j,1}\|^{2}_{*,\sigma,B_{\rho}}+|d^{1}_{j0}-d^{2}_{j0}|^2\Big)+\|\phi_{out}^1-\phi^2_{out}\|_{\infty}\\&\hspace{10mm}+\ve\mu_i|\log\ve|\Big(\sum_{j=1}^{N}\big(\|\phi^{1}_{j,1}-\phi^{2}_{j,1}\|_{*,\sigma,B_{\rho}}+|d^{1}_{j0}-d^{2}_{j0}|\big)+\|\phi_{out}^1-\phi^2_{out}\|_{\infty}\Big)\\
&\hspace{10mm}+\frac{1}{|\log\ve|}\Big(\sum_{j=1}^{N}\big(\|\phi^{1}_{j,1}-\phi^{2}_{j,1}\|^{2}_{*,\sigma,B_{\rho}}+|d^{1}_{j0}-d^{2}_{j0}|^2\big)+\|\phi_{out}^1-\phi_{out}^2\|_{\infty}\Big)\Bigg].
\end{aligned}
\end{equation}
Collecting together \eqref{phioutcontraction}, \eqref{finalestimates1} and \eqref{finalestimates2}, we  conclude that $\widetilde{\mathcal{F}}$ is a contraction mapping in $\mathcal{B}$ for all sufficiently small $\ve>0$, hence \eqref{fixedfinal} possesses a fixed point.
\end{proof}

\section{Balancing conditions and adjustment of the vortex locations}\label{reduced}
 In Section \ref{Section projected}, we have constructed a solution $(\phi_{in},\phi_{out})=(\phi_{in,1},\dots,\phi_{in,N},\phi_{out})$ to the coupled system 
\begin{equation*}
\Delta_y\phi_{in,i} +\gamma\Gamma^{\gamma-1}_{+}\phi_{in,i}+\bar{B}_i[\phi_{in,i}]+H_i(\phi_{in},\phi_{out})=\sum_{j=1}^{2}d_{ij}\gamma\Gamma^{\gamma-1}_{+}Z_j \, \inn \,\, B_{\rho}, \quad i=1,\dots,N,
\end{equation*}
and 
\[L_x[\phi_{out}]+\sum_{i=1}^{N} \left(L_x[\tilde{\eta}_i\phi_{in,i}]-\tilde{\eta}_iL_x[\phi_{in,i}]\right)=0 \, \inn \,\, \R^2.\]
For the reader's convenience, we recall that $\rho = \frac{2\delta_1}{\ve\mu_i|\log\ve|^2}$, $\bar{B}_i$ and $H_i(\phi_{in},\phi_{out})$ are defined in \eqref{defbarBinner}-\eqref{defHinner}, while the kernel elements $Z_j,\,j=1,2$ are given in \eqref{elemkernel}. Moreover, the solution  $(\phi_{in,1},\dots,\phi_{in,N},\phi_{out})$\,is obtained in Proposition \ref{propsec8} using a fixed point argument, and the corresponding estimates can be found in \eqref{boundsforsoln}.

To conclude the proof of Theorem \ref{maintheorem}, it remains to obtain an exact solution to the coupled Inner-Outer system in \eqref{inprobexp} and \eqref{poisreduced}, by solving   
\begin{equation}\label{zerofunctionals}
 d_{ij}\left[H_i(\phi_{in},\phi_{out})+\bar{B}_i[\phi_{i,2}]\right]=0, \quad \mbox{for all} \quad i=1,\dots,N, \, j=1,2,\end{equation}
where 
\begin{equation*}
        d_{ij}\left[H_i(\phi_{in},\phi_{out})+\bar{B}_i[\phi_{i,2}]\right]=\gamma_j \int_{\R^2}\left(H_i(\phi_{in},\phi_{out})+\bar B_i[\phi_{i,2}]\right)Z_j, \quad \gamma_{j}^{-1}=\int_{\R^2}\gamma\Gamma^{\gamma-1}_{+}Z_j^2.
        \end{equation*}
As will be demonstrated in the remainder of this section, satisfying these conditions requires a suitable choice of the points $P_1,\dots,P_N$ and $\tilde{r}$ in \eqref{formpoints}, thereby identifying the correct vortex locations for an exact solution of \eqref{euler}.

To begin with, Proposition \ref{prop1} together with \eqref{maxscaling} shows that $\|\bar{B}_i[\phi_{i,2}]+H_i(\phi_{in},\phi_{out})\|_{2+\sigma,\beta,B_{\rho}} \lesssim \ve\mu|\log\ve|,$ hence
\begin{equation*}
d_{ij}\left[H_i(\phi_{in},\phi_{out})+\bar{B}_i[\phi_{i,2}]\right]
=\gamma_{j}\int_{B_{\rho}} \left(H_i(\phi_{in},\phi_{out})+\bar{B}_i[\phi_{i,2}]\right)Z_{j}\,\dd y +\ve^{1+\sigma}\mathcal{M}(P),
\end{equation*}
for $\sigma>0$ as in \eqref{boundsforsoln}. 

Here and throughout this section, $\mathcal{M}(P)$ denotes a smooth function of $P=(P_1,\dots,P_N)$ satisfying \eqref{formpoints}, which is uniformly bounded as $\ve\to 0$. Its expression may vary from line to line, and even within the same line.

Furthermore, since $H_i(\phi_{in},\phi_{out})=\mathcal{N}_i\Big(\sum\limits_{j=1}^{N}\tilde{\eta}_j\phi_{in,j}+\phi_{out}\Big)+\left(\gamma\Gamma^{\gamma-1}_{+}+b_i\right)\phi_{out}+\widetilde{E}_i,$ one can also write
\[\int_{B_{\rho}} \bigg[\mathcal{N}_i\Big(\sum_{\ell=1}^{N}\tilde{\eta}_{\ell}\phi_{in,\ell}+\phi_{out}\Big)+\left(\gamma\Gamma^{\gamma-1}_{+}+b_i\right)\phi_{out}+\bar{B}_i[\phi_{i,2}]\bigg]Z_j =\ve^{1+\sigma}\mathcal{M}(P), \quad j=1,2.\]
Consequently, the conditions in \eqref{zerofunctionals} amount to establishing
\begin{equation}\label{conderror}
    \int_{B_{\rho}} \widetilde{E}_i Z_j=\ve^{1+\sigma}\mathcal{M}(P), \quad i=1,\dots,N,\,\, j=1,2.
\end{equation}
In light of Proposition \ref{errorprop1} and the expansion of the approximation error $\widetilde{E}_i$, the integral against $Z_1$ can be expressed as
\begin{equation*}
\begin{aligned}
   \int_{B_{\rho}}\widetilde{E}_iZ_1 \,\dd y= \ve\mu_i\kappa_i\Bigg[&\left(\frac{3R_i h^2+R_i^3}{2h(h^2+R_i^2)^{\frac 3 2}}\right)\int_{B_{\rho}}y_1\Gamma^{\gamma}_{+}Z_1 \, \dd y+c_{1,i}\int_{B_{\rho}}y_1\gamma\Gamma^{\gamma-1}_{+}\Gamma Z_1 \, \dd y\\
   &+\big(\mathcal{F}_{1,i}\left(P\right)+\log|\log\ve|\mathcal{M}(P)\big)\int_{B_{\rho}}\gamma\Gamma^{\gamma-1}_{+}y_1Z_1\,\dd y+ \ve\mu_i |\log\ve|^2\mathcal{M}(P)\Bigg],
\end{aligned}
\end{equation*}
where \[\mathcal{F}_{1,i}(P)=|\log\ve|\left(-c_{1,i}\nu'(1)-\frac{\alpha R_i h}{\kappa_i\sqrt{h^2+R_i^2}}\right)+\sum_{j\neq i}\frac{\kappa_j}{\kappa_i}\nu'(1)\frac{[A_j^{-1}(P_i-P_j)]_1}{|A_j^{-1}(P_i-P_j)|^2}.\]
Furthermore, upon testing with $Z_2$ we obtain
\[\int_{B_{\rho}} \widetilde{E}_i Z_2 \, \dd y=\ve \mu_i \kappa_i \Bigg[\mathcal{F}_{2,i}(P)\int_{B_{\rho}}\gamma\Gamma^{\gamma-1}_{+}y_2Z_2\,\dd y +\ve\mu_i|\log\ve|^2\mathcal{M}(P)\bigg],\]
with \[\mathcal{F}_{2,i}(P)=\sum_{j \neq i} \frac{\kappa_j}{\kappa_i}\nu'(1)\frac{[A_j^{-1}(P_i-P_j)]_2}{|A_j^{-1}(P_i-P_j)|^2}.\]
Using the form of the points $P_1,\dots,P_N$ in \eqref{formpoints}, namely
\begin{equation*}
P_i=(r_0+\tilde{r},0)+\frac{1}{|\log\ve|}\widehat{P}_{i},\qquad P_i=(a_i,b_i),\qquad \widehat{P}_i=(\hat{a}_i,\hat{b}_i) , \qquad i=1,\dots,N,
\end{equation*}
and defining the rescaled points
\[\widetilde{P}_i=(\tilde{a}_i,\tilde{b}_i)=\left(\frac{\sqrt{h^2+r_0^2}}{h}\hat{a}_i,\hat{b}_i\right),\]
a direct substitution in \eqref{conderror} together with \eqref{constantsj} and the expansion in \eqref{expinverse} yields the reduced problem 
\begin{equation}\label{Reduced1}
\begin{aligned}
    \left(-\frac{\nu'(1)R_i h}{2(h^2+R_i^2)^{\frac 3 2}}-\frac{\alpha R_i h}{\kappa_i\sqrt{h^2+R_i^2}}\right)+\sum_{j\neq i}\nu'(1)\frac{\kappa_j}{\kappa_i}\frac{[(\widetilde{P}_i-\widetilde{P}_j)]_1}{|(\widetilde{P}_i-\widetilde{P}_j)|^2}&=\frac{\log|\log\ve|}{|\log\ve|}\mathcal{M}(P),\\
    \sum_{j\neq i}\nu'(1)\frac{\kappa_j}{\kappa_i}\frac{[(\widetilde{P}_i-\widetilde{P}_j)]_2}{|(\widetilde{P}_i-\widetilde{P}_j)|^2}&=\frac{1}{|\log\ve|}\mathcal{M}(P).
\end{aligned}
\end{equation}
We now reformulate the reduced system \eqref{Reduced1} in complex coordinates by identifying  $\widetilde{P}_j=\left(\widetilde{P}_{j,1},\widetilde{P}_{j,2}\right)$ with $\widetilde{P}_j=\widetilde{P}_{j,1}+i\widetilde{P}_{j,2},$ which allows us to recast it as a perturbation of the limit problem \eqref{balancingcond}, which reads 
\begin{equation}\label{limitproblem}
\sum_{j\neq i}\frac{\kappa_j}{\widetilde{P}_i-\widetilde{P}_j}=\left(\frac{\kappa_i h r_0}{2(h^2+r_0^2)^{\frac 3 2}}+\frac{\alpha h r_0}{\nu'(1)\sqrt{h^2+r_0^2}}\right), \quad i=1,\dots,N.
\end{equation}
With this in mind, our objective is to find a solution to \eqref{Reduced1} as a small perturbation of the solution of \eqref{limitproblem} by linearisation.

For $i=1,\dots,N,$ we define
\begin{equation}\label{deflhsrhs}
\mathbf{F}_i:\C^{N}\to \C, \quad \mathbf{F}_i(\widetilde{P})=\sum_{j \neq i}\frac{\kappa_j}{\widetilde{P}_i-\widetilde{P}_j},\quad\mbox{and}\quad \quad U_i(r)=\left(\frac{\kappa_i h r}{2(h^2+r^2)^{\frac 3 2}}+\frac{\alpha h r}{\nu'(1)\sqrt{h^2+r^2}}\right).
\end{equation}
Due to the specific choice of $\widetilde{P}^{b}$ in \eqref{balancingcond}, it follows that
\[\mathbf{F}_i\big(\widetilde{P}^{b}\big)=U_i(r_0),\]
while the derivative of $\mathbf{F}=\left(\mathbf{F}_1,\dots,\mathbf{F}_N\right)$ at $\widetilde{P}^{b}$ is given by
\begin{equation}\label{diffF}
\dd\mathbf{F}_{\widetilde{P}^{b}}=\begin{pmatrix}
-\sum\limits_{j\neq 1}\kappa_j T_{1j} & \kappa_2 T_{12} & \cdots & \kappa_N T_{1N} \\
\kappa_1 T_{21} & -\sum\limits_{j\neq 2}\kappa_j T_{2j} & \cdots & \kappa_N T_{2N} \\
\vdots & \vdots & \ddots & \vdots \\
\kappa_1 T_{N1} & \kappa_2 T_{N2} & \cdots & -\sum\limits_{j\neq N}\kappa_j T_{Nj}
\end{pmatrix},\quad T_{ij}=T_{ji}=\frac{1}{\left(\widetilde{P}_i^{b}-\widetilde{P}_j^{b}\right)^2}.
\end{equation}
At this point, we recognise that the vector $\hat{e}=(1,\dots,1)\in\C^{N}$ lies in $\operatorname{Ker}\left(\dd\mathbf{F}_{\widetilde{P}^{b}}\right)$, while the nondegeneracy condition on the point $\widetilde{P}^{b}$ is exactly the requirement that this kernel is one-dimensional, i.e. 
\[\operatorname{Ker}\left(\rm{d}\mathbf{F}_{\widetilde{P}^{b}}\right)=\mbox{span}\{\hat{e}\}.\]
In order to obtain a solution of \eqref{Reduced1} as a small perturbation of $\widetilde{P}^{b},$ we introduce ${\mathbf{q}}=({q}_1,\dots,{q}_N)\in\C^N$ and define $\tilde{\mathbf{q}}=(\tilde{q}_1,\dots,\tilde{q}_N)\in\C^{N}$ by 
\[\tilde{q}_j\defeq\operatorname{Re}\left(\frac{\sqrt{h^2+r_0^2}}{h}q_j\right)+i\operatorname{Im}(q_j),\quad j=1,\dots,N.\] Accordingly, we decompose
\[\widetilde{P}_j=\widetilde{P}^{b}_j+\tilde{q}_j=\operatorname{Re}\left(\frac{\sqrt{h^2+r_0^2}}{h}(P_j^{b}+q_j)\right)+i\operatorname{Im}\left(P_j^{b}+q_j\right), \quad j=1,\dots,N,\]
which in view of \eqref{deflhsrhs} allows us to rewrite the system in \eqref{Reduced1} as 
\begin{equation}\label{expreduced1}
\mathbf{F}_i(\widetilde{P})=U_i(R_i)+\hat{\sigma}_i, \quad i=1,\dots,N,
\end{equation}
with $\operatorname{Re}(\hat{\sigma}_i)=\frac{\log|\log\ve|}{|\log\ve|}\mathcal{M}(P)$ and $\operatorname{Im}(\hat{\sigma}_i)=\frac{\mathcal{M}(P)}{|\log\ve|}.$

We next consider the  expansions
\begin{align*}
\mathbf{F}(\widetilde{P})&=\mathbf{F}(\widetilde{P}^{b})+\dd\mathbf{F}_{\widetilde{P}^b}(\tilde{\mathbf{q}})+\mathcal{O}(|\tilde{\mathbf{q}}|^2),\\
U_i(R_i)&=U_i(r_0)+U^{'}_{i}(r_0)\left(\tilde{r}+\frac{h}{\sqrt{h^2+r_0^2}}\frac{\operatorname{Re}\left(\widetilde{P}_i^{b}+\tilde{q}_i\right)}{|\log\ve|}\right)+\mathcal{O}\left(\left(\frac{|\tilde{\mathbf{q}}|}{|\log\ve|}+|\tilde{r}|\right)^{2}\right)+\mathcal{O}\left(\frac{|\tilde{r}|}{|\log\ve|}\right),
\end{align*}
while calculating the derivative $U_i^{'}(r_0)$ and substituting the fixed value of $\alpha$ in \eqref{uniformspeed}, we deduce that \eqref{expreduced1} can be expressed as
\begin{equation}\label{exp2reduced}
\dd\mathbf{F}_{\widetilde{P}^{b}}(\tilde{\mathbf{q}})=\mathcal{J}\left(\tilde{r},\tilde{\mathbf{q}}\right)+\frac{\tilde{r}h}{2}\left(\frac{h^2-2r_0^2}{(h^2+r_0^2)^{\frac 5 2}}(\kappa_1,\dots,\kappa_N) -\frac{h^2}{\left(h^2+r_0^2\right)^{\frac 5 2}}\frac{\sum\limits_{i=1}^{N} \kappa_i^2}{\sum\limits_{i=1}^{N}\kappa_i}(1,\dots,1)\right),
\end{equation}
where $\mathcal{J}(\tilde{r},\tilde{\mathbf{q}})$ is a smooth function satisfying
\begin{equation}\label{propK}
    \mathcal{J}(0,0)=0, \quad \mathcal{J}_{\tilde{\mathbf{q}}}(0,0)=0, \quad D_{\tilde{r}}\mathcal{J}(0,0)=o(1)\to 0, \quad \mbox{as}\quad \ve\to 0.
\end{equation}
Moreover, in the admissible range of $\tilde{r}$ and $\tilde{\mathbf{q}}$ in \eqref{formpoints}, one finds $\operatorname{Re}\mathcal{J}(\tilde{r},\tilde{\mathbf{q}})=\mathcal{O}\left(\frac{\log|\log\ve|}{|\log\ve|}\right)$ and $\operatorname{Im}\mathcal{J}(\tilde{r},\tilde{\mathbf{q}})=\mathcal{O}\left(|\log\ve|^{-1}\right)$ as $\ve \to 0$. In addition, since $\dd\mathbf{F}_{\widetilde{P}^{b}}$ has a one-dimensional kernel, then the same is valid for $\left(\dd\mathbf{F}_{\widetilde{P}^{b}}\right)^{T}$. In fact, since $T_{ij}=T_{ji}$ in \eqref{diffF}, it is easy to verify that $\operatorname{Ker}\big((\dd \mathbf{F}_{\widetilde{P}^{b}})^{T}\big)=\mbox{span}\{\hat{\kappa}\}$, with \[\hat \kappa=(\kappa_1,\dots,\kappa_N)\in\C^N.\]
Now, we further note that the linear system in \eqref{exp2reduced} is solvable if the right-hand side is orthogonal to $\hat\kappa.$ Since its projection onto $\hat\kappa$ is equal to 
\[\left(\mathcal{J}(\tilde{r},\tilde{\mathbf{q}})\cdot\hat\kappa-
\tilde{r}h\frac{r_0^2}{(h^2+r_0^2)^{\frac 5 2}}\sum_{i=1}^{N}\kappa_i^2\right)\frac{\hat\kappa}{\sum\limits_{i=1}^{N}\kappa_i^2},\]
we initially study the projected problem
\begin{equation}\label{reducedproj}
\begin{aligned}
\dd\mathbf{F}_{\widetilde{P}^{b}}(\tilde{\mathbf{q}})&=\mathcal{J}\left(\tilde{r},\tilde{\mathbf{q}}\right)+\frac{\tilde{r}h}{2}\left(\frac{h^2-2r_0^2}{(h^2+r_0^2)^{\frac 5 2}}(\kappa_1,\dots,\kappa_N) -\frac{h^2}{\left(h^2+r_0^2\right)^{\frac 5 2}}\frac{\sum\limits_{i=1}^{N} \kappa_i^2}{\sum\limits_{i=1}^{N}\kappa_i}(1,\dots,1)\right)\\
&\hspace{3mm}-\left(\mathcal{J}(\tilde{r},\tilde{\mathbf{q}})\cdot\hat\kappa-
\tilde{r}h\frac{r_0^2}{(h^2+r_0^2)^{\frac 5 2}}\sum_{i=1}^{N}\kappa_i^2\right)\frac{\hat\kappa}{\sum\limits_{i=1}^{N}\kappa_i^2}.
\end{aligned}
\end{equation}
Since $\widetilde{P}^{b}$ is a nondegenerate solution, it follows from the properties in \eqref{propK} and the Implicit Function Theorem that \eqref{reducedproj} admits a unique local solution of the form $\tilde{\mathbf{q}}\defeq \tilde{\mathbf{q}}(\tilde{r})$ in a neighbourhood of $(\tilde{\mathbf{q}},\tilde{r})=(0,0)$. In particular, the estimates of $\mathcal{J}(\tilde r,\tilde{\mathbf{q}})$ imply that this solution satisfies  \[|\tilde{\mathbf{q}}(\tilde r)|\lesssim \frac{\log|\log\ve|}{|\log\ve|}.\]
Towards obtaining a genuine solution of \eqref{expreduced1}, the next step is to find a suitable  $\tilde{r}=\tilde{r}_{*}$ so that

\begin{equation}\label{fpbarr}
    \tilde{r}\mapsto \mathcal{J}\left(\tilde{r},\tilde{\mathbf{q}}(\tilde{r})\right)\cdot\hat\kappa-\tilde{r}h\frac{r_0^2}{(h^2+r_0^2)^{\frac 5 2}}\sum\limits_{i=1}^{N}\kappa_i^{2}=0.
\end{equation}
 Using once more the property
 $\operatorname{Re}\mathcal{J}(\tilde{r},\tilde{\mathbf{q}})=\mathcal{O}\left(\frac{\log|\log\ve|}{|\log\ve|}\right) $ as $\ve \to 0$, Banach's fixed point theorem provides a unique solution $\tilde{r}_{*}$ for \eqref{fpbarr}, satisfying \[|\tilde{r}_{*}|\lesssim \frac{\log|\log\ve|}{|\log\ve|}.\]
Collecting the aforementioned results, we deduce that the  solution of \eqref{expreduced1} is given by
\[
P_i=\left(
r_0+\tilde{r}_{*}+\frac{h}{\sqrt{h^{2}+r_0^2}}\frac{\operatorname{Re}\left(\widetilde{P}_i^{b}+\tilde{q}_i(\tilde{r}_{*})\right)}{|\log\ve|},
\;
\frac{\operatorname{Im}\big(\widetilde{P}_i^{b}+\tilde{q}_i(\tilde{r}_{*})\big)}{|\log\ve|}
\right)^{T},\quad i=1,\dots,N,
\]
which satifies the desired estimates in \eqref{pointstheorem}. This concludes the proof of Theorem \ref{maintheorem}.
\begin{section}{Appendix}
In this section, we provide the proofs of the linear inner theories stated in Section \ref{Section 7}. 
\begin{proof}[Proof of Lemma \ref{lemat}]
Let $y=re^{i\theta}, \, r=|y|$. We decompose $\phi$ and $h$ in Fourier series as 
$$ h(y) =  \sum_{k\in\Z}  h_k (r) e^{ik\theta} , \quad \phi(y)= \sum_{k\in\Z}\phi_{k} (r) e^{ik\theta},
$$
to obtain the ODEs  
\begin{equation}\label{01}
L_k [\phi_{k}] + h_k(r) = 0 , \quad r\in(0,\infty), \quad k \in \Z,
\end{equation}
with
$$
L_k\defeq \partial_{rr} + \frac 1r \partial_{r}- \frac{k^2}{r^2}+  \gamma\Gamma^{\gamma-1}_{+}  .
$$
For $k=0$, we have that $z_0(r) = \frac{2}{ \gamma-1}\Gamma(r)+r\Gamma'(r)$ satisfies $L_0[z_0]=0,$ hence if we denote by $\xi_0\in(0,1)$ the unique root of $z_{0}$, the function
$$
\phi_{0} (r) =     -z_0(r)\int_{\xi_0}^r  \frac {\rm d s}{ s z_0(s)^2}  \int_0^s h_0(\rho) z_0(\rho) \rho\, \dd \rho
$$
is a smooth solution of \eqref{01}. Observing that $\int_0^\infty h_0(\rho) z_0(\rho) \rho\, \dd\rho = \frac 1{2\pi}\int_{\R^2} h(y)Z_0(y)\, \dd y,$
we deduce  
$$
|\phi_{0}(r)| \, \le \,   C\Big[ \, \log (2 + r) \Big| \int_{\R^2} h(y)Z_0(y)\, \dd y      \Big|   \, +\,  (1+r)^{-\sigma} \|h\|_{2+ \sigma} \Big ].
$$
For $|k|=1$, one can verify that  $z_k(r)=-\Gamma'(r)$ satisfies $L_k[z_k]=0$, thus a smooth solution reads
$$
\phi_{k}  (r) =     \Gamma'(r)\int_r^\infty  \frac {\dd s}{ s\Gamma'(s)^2}  \int_0^s h_k(\rho) \Gamma'(\rho) \rho\, \dd\rho,
$$
with
$$
|\phi_{k}(r)| \, \le \,   C\Big[ \,  (1+ r) \sum_{i=1}^2 \Big| \int_{\R^2} h(y)Z_i(y)\, \dd y      \Big|   \, +\,  (1+r)^{-(1+\sigma)} \|h\|_{ 2+\sigma} \Big ].
$$
For $k=2$, there exists a function $z_2(r)$ satisfying $ L_2[z_2] = 0 $ and  $z_2(r)=\mathcal{O}(r^{2})$ as $r\to 0$ and $r\to \infty$, while for $|k|\ge 2$ we note that
$$
\widetilde\phi_{k}  (r) =   \frac 4{ k^2} z_2(r)\int_0^r  \frac {\dd s}{ s z_2(s)^2}  \int_0^s |h_k(\rho)| z_2(\rho) \rho\,\dd \rho
$$
is a positive supersolution for equation \eqref{01}. As a result, for $|k|\geq 2$ there exists a unique solution $\phi_{k}$  with $|\phi_{k}(r)| \le \widetilde\phi_{k}  (r)$, hence
$$
|\phi_{k}(r)| \, \le \,    \frac C {k^2}   (1+r)^{-\sigma} \|h\|_{2+\sigma}, \quad |k|\ge 2.
$$
Combining the above estimates for each mode, the function  $$\phi(y)= \sum_{k\in\Z} \phi_{k} (r) e^{ik\theta} $$  is a solution of \eqref{Linearisedinner} and defines a linear operator of $h$, satisfying 
\begin{equation}\label{cot} \begin{aligned}
 | \phi (y)|
\,  \le  \,  C \left [ \,  \log (2+|y|) \,\left|\int_{\R^2} h Z_0\right|  +    (1+|y|) \sum_{i=1}^2 \left|\int_{\R^2} h Z_i\right|  +  (1+|y|)^{-\sigma}\|h\|_{2+\sigma}   \,\right ]. \end{aligned}
\end{equation}
Under the further assumption  $\|h\|_{2+\sigma,\beta} <+\infty$, we next establish analogous estimates for first and second order derivatives of $\phi$. We let $R=|y|\gg 1$, fix the direction $\vec e=\frac{y}{|y|}\in\mathbb{S}^1$ so that $y=R\vec e$, and consider the rescaled functions
\[
\phi_{R}(\ell)  = {R^{\sigma}} \phi \left(y+R\ell\right), \quad h_R(\ell) = R^{2+\sigma}h \left(y+R\ell\right). \]
In the neighbourhood $U_{y}\defeq \{y+R\ell: |\ell|<\frac{1}{2}\}$, one finds
\[\Delta_\ell\phi_{R}(\ell)  +   R^2 \gamma\Gamma^{\gamma-1}_{+}(y+R\ell)\phi_{R}(\ell) + h_R (\ell)  = 0.\]
Setting \[m_i \defeq  \Big| \int_{\R^2}  h Z_i \Big|, \quad i=0,1,2,\]
we use \eqref{cot}, the fact that $\|h_{R}\|_{L^{\infty}\big(B_{\frac 1 2}(0)\big)}\leq C \|h\|_{2+\sigma},$ and a standard elliptic estimate to find
 \begin{equation}\label{holder1}
 \|\nabla_{\ell} \phi_{R} \|_{L^{\infty}\big( B_{\frac 14}(0) \big)} + \|\phi_{R} \|_{L^{\infty}\big( B_{\frac 12}(0) \big)} \, \le\,  C\Big [ m_0 R^{\sigma}\log R + \sum_{i=1}^2 m_i R^{1+\sigma} + \| h\|_{2+\sigma} \Big ].
 \end{equation}
In addition, since $[h_R(\ell) ]_{B_{\frac 12} (0),\beta}\leq C\| h\|_{2+\sigma,\beta}$, a combination of interior Schauder estimates and \eqref{holder1} gives
\begin{equation}\label{holder2}
\|D^2_{\ell} \phi_{R} \|_{L^\infty\big( B_{\frac 14}(0) \big) } +  [ D^2_{\ell} \phi_{R} ]_{B_{\frac 1 4}(0),\beta} \, \le\,  C\Big [ m_0 R^{\sigma} \log R +   \sum_{i=1}^2 m_i R^{1+\sigma} +   \| h\|_{2+\sigma,\beta} \Big ].
\end{equation}
Collecting together \eqref{cot}, \eqref{holder1} and \eqref{holder2}, the result of the lemma follows.
\end{proof}
\begin{proof}[Proof of Proposition \ref{Linearisedinner}]
As a preliminary step, we consider a standard linear extension operator $h\mapsto \tilde{h} $ to the whole $\R^2$,
so that the support of $\tilde{h}$ is contained in $B_{2\rho}$ and $\|\tilde{h}\|_{\sigma,\beta} \le C\|h\|_{\sigma,\beta, B_\rho}$, where $C>0$ is independent of $\rho$. In addition,
we assume that the coefficients of $\bar{B}_i$ belong in $C^1(\R^2)$, with compact support in $B_{2\rho}$.
 In this regard, we study the auxiliary projected problem in $\R^2$ given by

\begin{equation}\label{001} 
\Delta_y\phi_{i}  +  \gamma\Gamma^{\gamma-1}_{+}\phi_{i}  + \bar{B}_i[\phi_{i}]  + \tilde{h}(y)  = \sum_{j=0}^2  d_{ij}\gamma\Gamma^{\gamma-1}_{+} Z_j \inn \R^2,
\end{equation}
where $d_{ij} = d_{ij}[h,\phi_{i}]$ are the scalars defined as
\begin{equation}\label{coeffsforortho}
d_{ij}= \gamma_j \int_{\R^2} (\tilde{h}(y)+\bar{B}_i[\phi_{i}] )Z_j,  \quad \gamma_j^{-1} = \int_{\R^2}  \gamma\Gamma^{\gamma-1}_{+} Z_j^2, \quad j=0,1,2.
\end{equation}
For $j=1,2,$ since $Z_j=\mathcal{O}\left(\frac{1}{1+|y|}\right)$ as $|y| \to \infty,$ one finds 
\[\bar{B}_i[Z_j]\leq C\ve\mu_i\left(|y||D^2_{y}Z_j|+|D_y Z_j|\right) +\mathcal{O}\left(\ve\mu_i|\log\ve| \Gamma^{\gamma-2}_{+}\right)Z_j  \leq \frac{C\ve\mu_i|\log\ve|}{1+|y|^2},\]
thus we get
 $$
  \int_{\R^2} \bar{B}_i[\phi_{i}]Z_j =   \int_{\R^2} \phi_{i} \bar{B}_i[Z_j] \leq C\ve\mu_i|\log\ve|\|\phi_{i}\|_{*,\sigma},\quad j=1,2,
 $$
 where we used that $ \int_{\R^2} \frac {1}{1+|y|^{2+\sigma}}  <+\infty$.
 
On the other hand, since $Z_0(y)=\mathcal{O}\left(\log\left(2+|y|\right)\right)$ as $|y|\to\infty,$ for $j=0$ we instead have
 \[\bar{B}_i[Z_0]\leq \frac{C\ve\mu_i|\log\ve|}{1+|y|},\]
 hence
 \begin{equation*}
 \begin{aligned}
\int_{\R^2} \bar{B}_i[\phi_{i}]Z_0 &\leq C\ve\mu_i\int_{B_{2\rho}} \left(|y||D^2_{y}\phi_{i}|+|D_{y}\phi_{i}|\right)Z_0 +\int_{B_{2\rho}} \mathcal{O}\left(\ve\mu_i|\log\ve| \Gamma^{\gamma-2}_{+}\right)Z_0\\
&\leq C\left(\ve\mu_i\right)^{\sigma}|\log\ve|^{2\sigma-1}\|\phi_{i}\|_{*,\sigma}.
\end{aligned}
\end{equation*}
In addition, we observe that
\[\int_{\R^2\setminus B_{\rho}} \tilde{h}Z_0=\mathcal{O}\left(\frac{\log(2+\rho)}{1+\rho^{\sigma}}\right)\|h\|_{2+\sigma,\beta,B_{\rho}}, \quad \int_{\R^2\setminus B_{\rho}} \tilde{h}Z_j=\mathcal{O}\left(\frac{1}{1+\rho^{1+\sigma}}\right)\|h\|_{2+\sigma,\beta,B_{\rho}}, \, j=1,2,\]
and 
\[\bar{B}_i[\phi_{i}] \leq\frac{C\ve\mu_i|\log\ve|}{1+|y|^{1+\sigma}}\|\phi_{i}\|_{*,\sigma},\]
which further implies that  \[\big\| \bar{B}_i[\phi_{i}] \big\|_{2+\sigma,\beta} \leq \frac{C}{|\log\ve|}\|\phi_{i}\|_{*,\sigma,B_{\rho}}.\]
To solve \eqref{001}, we work in the Banach space $\mathcal{X}\defeq\{\phi_{i} \in C^{2,\beta}(\R^2)\ : 
  \|\phi_{i}\|_{*, \sigma} <+\infty\}$ and use the linear operator $\mathcal T$ of Lemma \ref{lemat} to reformulate the equation as 
\be\label{fp}
 \phi_{i}  =   \mathcal A  [\phi_{i}]  +  \mathcal {H},\quad \phi_{i} \in \mathcal{X},
\ee
where
$$
\mathcal A  [\phi_{i}]
 =  \mathcal T\Big [\bar{B}_i[\phi_{i}] -\sum_{j=0}^2 d_{ij}[0, \phi_{i}]\gamma\Gamma^{\gamma-1}_{+}Z_j  \Big] ,\quad
 \mathcal H  = \mathcal T\Big [  \tilde h   - \sum_{j=0}^2 d_{ij}[\tilde h,0] \gamma\Gamma^{\gamma-1}_{+}Z_j  \Big].
$$
Employing \eqref{coeffsforortho}, we obtain
\[\big\|\mathcal{A}[\phi_{i}]\big\|_{*,\sigma}\leq C\frac{\delta_1}{|\log\ve|}\|\phi_{i}\|_{*,\sigma},\quad \|\mathcal{H}\|_{*,\sigma}\leq C\|h\|_{2+\sigma,\beta,B_{\rho}},\]
hence the Contraction Mapping Theorem in $\mathcal{X}$ yields a unique solution to fixed point problem \eqref{fp} for all sufficiently small $\ve>0$, which defines a linear operator of $h$ and satisfies
$$
\|\phi_{i} \|_{*, \sigma} \ \le\  C \|h \|_{ 2+\sigma,\beta, B_{\rho}}.
$$
To this end, we set $T_i[h] = \phi_{i}\big|_{B_\rho}$ to complete the proof.
\end{proof}
\end{section}
\bibliographystyle{plain}
\bibliography{references}
\end{document}